\documentclass{article}
\usepackage{graphicx} % Required for inserting images
\usepackage{mystyle}
\title{Ultrapowers of determinacy models\\ as iteration trees on $\text{HOD}$}
\author{Gabriel Goldberg\footnote{
     The first author's research was partially supported by NSF Grant 2401789.
}\and Grigor Sargsyan\thanks{The second author's work is funded by the National Science Centre, Poland under the Weave-UNISONO call in the Weave Programme, registration number UMO-2021/03/Y/ST1/00281 and UMO-2023/05/Y/ST1/00194.}\and Benjamin Siskind\thanks{The third author's research was funded in whole or in part by the Austrian Science Fund (FWF) [10.55776/Y1498, 10.55776/PAT2418625].}}
\date{}

\begin{document}

\maketitle
\begin{abstract}
    In the 1990s, Steel and Woodin showed that under large cardinal hypotheses, the HOD of $L(\mathbb R)$ admits a fine-structural analysis. Although this theorem sheds light on various problems in descriptive set theory, the fine-structural representations of many fundamental objects of determinacy theory are still unknown. For example, Woodin asked whether the ultrapower of HOD by the closed unbounded filter on $\omega_1$ is given by an iteration tree on HOD according to its fine-structural extender sequence and canonical iteration strategy. In this paper, we give a positive answer to Woodin's question, not only for the closed unbounded filter but for any ultrafilter on an ordinal. The key tool that enables the solution of Woodin's problem is a recent advance in inner model theory: the Steel--Schlutzenberg theory of normalizing iteration trees, which allows us to represent HOD and its ultrapowers as normal iterates of a single countable mouse. Despite our results, the precise structure of the iteration trees that lead from HOD into its ultrapowers remains a mystery.
\end{abstract}
\section{Introduction}
There is a large body of work exploring connections between determinacy and large cardinals. This work began with Solovay’s discovery that the Axiom of Determinacy implies that the club filter on $\omega_1$ is an ultrafilter. Of course, this ultrafilter is ordinal definable, so it follows that $\omega_1$ is a measurable cardinal in $\HOD$. This is typical of how one shows that some large cardinal property is realized in $\HOD$ in the determinacy context: one produces an ultrafilter $U$ on an ordinal such that the ostensibly external embedding $i_U\restriction\HOD$ witnesses the appropriate large cardinal property. By a theorem of Kunen, \textit{any} ultrafilter on an ordinal is ordinal definable, so $i_U\restriction\HOD$ is actually an internal elementary embedding of $\HOD$, and so it really does realize the desired large cardinal property. This method was pushed further by Martin, Steel, and Woodin, culminating in Woodin’s result that in $L(\mathbb{R})$ under determinacy, $\Theta$ is a Woodin cardinal of $\HOD$, the witnesses to Woodinness coming from cleverly constructed ultrafilters on ordinals. 

Connections between determinacy and inner model theory have provided a different kind of understanding of the large cardinal structure of $\HOD$ in the determinacy context. Work of Steel and Woodin, and subsequenlty Sargsyan, Trang, and others, have provided fine-structural analyses of $\HOD$ in all known models of $\AD + V=L(P(\mathbb{R}))$; that is, they showed that in all these various models of determinacy, $\HOD|\Theta$ is a premouse (of some variety). Of course, this immediately implies that $\HOD|\Theta$ has all of the nice properties that premice have, for example GCH or the Ultrapower Axiom. But this also identifies a distinguished sequence of extenders of HOD which can be used to form fine-structural iteration trees (along with the resulting iterates and iteration maps).

In this paper we establish a close connection between ultrafilters on ordinals and this fine-structural understanding of $\HOD$ in the determinacy context. We show that in many of the known models of $\AD+V=L(P(\mathbb{R}))$, for $U$ an ultrafilter on an ordinal, $i_U(\HOD)$ is an iterate of $\HOD$ via an iteration tree coming from the distinguished extender sequence of $\HOD$ and $i_U\restriction \HOD$ is the corresponding iteration map. In particular, this holds in $L(\mathbb{R})$ under determinacy, answering a question of Woodin. 

There are essentially two ingredients to our proofs: the analysis of $\HOD$ in determinacy models, mentioned above, and full normalization, a more recent inner-model-theoretic tool developed in \cite{Farmer} and \cite{fullnormalization}. While both of these ingredients are quite involved technically, we will get to use both off the shelf, and our proofs are fairly short and simple by inner model theory standards.

We will review what we need from the $\HOD$ analysis and full normalization in \cref{section - preliminaries} before establishing our main results  in two contexts: under $\AD_\mathbb{R}+V=L(P(\mathbb{R}))+\textnormal{HPC}$ in \cref{section - AD_R}, and under $\AD+V=L(\mathbb{R})$ in \cref{section - L(R)}.
\section{Preliminaries}\label{section - preliminaries}

\subsection{The HOD analysis}

In this section we will collect some terminology and results about mouse pairs and the HOD analysis in the contexts of interest. By a \textit{premouse} we mean any one of three varieties:  an ms-indexed pure-extender premouse, a pfs pure-extender premouse, or a least branch strategy premouse. A \textit{partial iteration strategy} $\Sigma$ for a premouse $P$ is a partial strategy for choosing cofinal well-founded branches through stacks of normal trees on $P$; more precisely, $\Sigma$ is a partial function with domain some set $D$ of stacks of normal trees $\vec{\tree{S}}$ on $P$ such that
\begin{enumerate}
    \item $\lh(\vec{\tree{S}})$ is successor ordinal $\alpha+1$, $\lh(\tree{S}_\alpha)$ is a limit ordinal, and $\Sigma(\vec{\tree{S}})$ is a cofinal well-founded branch of $\tree{S}_\alpha$, and
    \item for any $\alpha<\lh(\vec{\tree{S}})$, and any limit $\lambda<\lh(\tree{S}_\alpha)$, $\vec{\tree{S}}\restriction\alpha^\frown \langle \tree{S}_\alpha\restriction\lambda\rangle\in D$ and $[0,\lambda)_{\tree{S}_\alpha}=\Sigma(\vec{\tree{S}}\restriction\alpha^\frown \langle \tree{S}_\alpha\restriction\lambda\rangle)$.
\end{enumerate}
We say a stack $\vec{\tree{S}}$ is \textit{by} $\Sigma$ if for all $\alpha<\lh(\vec{\tree{S}})$, and limit $\lambda<\lh(\tree{S}_\alpha)$, $\vec{\tree{S}}\restriction\alpha^\frown \langle \tree{S}_\alpha\restriction\lambda\rangle\in D$ and $[0,\lambda)_{\tree{S}_\alpha}=\Sigma(\vec{\tree{S}}\restriction\alpha^\frown \langle \tree{S}_\alpha\restriction\lambda\rangle)$.
We also say a stack of normal trees $\vec{\tree{S}}$ on $P$ is on $(P,\Sigma)$ if it is by $\Sigma$.

We will use a variation of the notion of mouse pair from \cite[Section 9.2]{steel-book}. 

\begin{defn}\label{defn - mouse pair}
  A \textit{mouse pair with scope} $X$ is a pair $(P,\Sigma)$ such that $P$ is premouse and $\Sigma$ is a partial iteration strategy for $P$ with domain $D\subseteq X$ such that \begin{enumerate}
      \item \begin{enumerate}
          \item if $\vec{\tree{S}}\in X$ is a stack of normal trees on $P$ of successor length $\alpha+1$ such that $\lh(\tree{S}_\alpha)$ is a limit ordinal and $\vec{\tree{S}}$ is by $\Sigma$, then $\vec{\tree{S}}\in D$,
      \item if $\vec{\tree{S}}$ is a stack of normal trees on $P$ by $\Sigma$ of limit length, then $M_\infty^{\vec{\tree{S}}}$ is well-founded, and
      \item if $\vec{\tree{S}}$ is a stack of normal trees on $P$ by $\Sigma$ and $\tree{T}$ a putative normal iteration tree on $M_\infty^{\vec{\tree{S}}}$ of successor length such that $\vec{\tree{S}}^\frown\langle\tree{T}\restriction \alpha\rangle$ is by $\Sigma$ for all $\alpha+1<\lh(\tree{T})$, then $M_\infty^\tree{T}$ is well-founded,
      \end{enumerate}
      \item $\Sigma$ is internally lift-consistent, push-forward consistent, fully normalizes well, and has very strong hull condensation, and
      \item if $P$ is a least branch premouse, then $(P,\Sigma)$ moves itself correctly.
  \end{enumerate}
\end{defn}

 The reader can find the several terms we have not defined in \cite{steel-book} and \cite{fullnormalization}, but we believe familiarity with these terms is not really necessary for understanding the paper. We only need a few facts about mouse pairs which we will state and discuss in these preliminary sections. 
 
 We have deviated from the definition of mouse pair in \cite{steel-book} in a couple of ways, mostly as a matter of convenience. First, we have restricted our iteration strategies to stacks of normal trees, whereas the iteration strategies in \cite{steel-book} act on a wider class of stacks of trees. Second, we have replaced quasi-normalizing well and strong hull condensation with fully normalizing well and very strong hull condensation. The main theorem of \cite{fullnormalization} is that if $(P,\Sigma)$ is a mouse pair the sense of \cite{steel-book} fully normalizing well and has very strong hull condensation, so that $(P,\bar{\Sigma})$ is a mouse pair in our sense, where $\bar{\Sigma}$ is the restriction of $\Sigma$ to stacks of normal trees. (Moreover, it can be shown by the methods of \cite{fullnormalization} that mouse pairs in our sense extend uniquely to mouse pairs in the sense of \cite{steel-book}, but we will not use this.) 

\begin{defn}\label{defn - sigma plus}
    Let $(P,\Sigma)$ be a mouse pair with scope $\textnormal{HC}$. A normal iteration tree $\tree{T}$ on $P$ is \textit{by} $\Sigma^+$ if every countable weak hull of $\tree{T}$ is by $\Sigma$.
\end{defn}

Note that for $\tree{T}$ a tree on $P$ of limit length by $\Sigma^+$, there is at most one branch $b$ of $\tree{T}$ such that $\tree{T}^{\frown} b$ is by $\Sigma^+$. In this case, we define $\Sigma^+(\tree{T})=b$. Since $\Sigma$ has very strong hull condensation, if $\tree{T}$ is by $\Sigma$, it is also by $\Sigma^+$. So $\Sigma^+$ is a partial iteration strategy for $P$ extending $\Sigma$ to certain iteration trees of uncountable length. 

We also say that a stack of trees $\langle \tree{S}, \tree{T}\rangle$ is by $\Sigma^+$ if $X(\tree{S},\tree{T}\restriction \xi+1)$ is by $\Sigma^+$ for all $\xi<\lh(\tree{T})$. if $\tree{S}$ is by $\Sigma^+$ with last model $Q$, we let $\Sigma^+_Q$ be the resulting tail strategy; that is, $\tree{T}$ is by $\Sigma^+_Q$ if and only if $\langle \tree{S}, \tree{T}\rangle$ is by $\Sigma^+$.

\begin{lma}[{Steel}, \cite{mousepairs}]\label{lemma - strategy extension}
    Assume $\AD^+$. Let $(P, \Sigma)$ be a least branch hod pair with scope $\textnormal{HC}$. Then $\Sigma^+$ restricts to a total iteration strategy for normal trees of length less than $\Theta$. That is, whenever $\tree{T}$ is by $\Sigma^+$ and has limit length less than $\Theta$, $\Sigma^+(\tree{T})$ is defined.
\end{lma}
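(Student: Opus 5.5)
The plan is to build the branch $\Sigma^+(\tree{T})$ directly from branches that $\Sigma$ chooses on countable hulls of $\tree{T}$. Determinacy is used in two places. First, it makes those countable choices cohere, and this is where the hull condensation of $\Sigma$ does the work. Second, it shows the resulting branch has a well-founded direct limit model; here the bound $\lh(\tree{T})<\Theta$ is essential.

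Fix $\tree{T}$ by $\Sigma^+$ of limit length $\lambda<\Theta$.

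\emph{Step 1: code $\tree{T}$ by a set of reals.} Since $\lambda<\Theta$, there is a surjection $f\colon\mathbb{R}\to\lambda$. Using $f$ (and $\AD^+$, so that the relevant structures are Suslin/co-Suslin), we view $\tree{T}$ as coded by a set of reals $A$. For each countable $\sigma\prec H$ with $\tree{T},f\in\sigma$, the transitive collapse of $\sigma$ gives a countable weak hull $\tree{T}_\sigma$ of $\tree{T}$. Because $\tree{T}$ is by $\Sigma^+$, each $\tree{T}_\sigma$ is by $\Sigma$, so the branch $b_\sigma=\Sigma(\tree{T}_\sigma)$ is defined.

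\emph{Step 2: coherence.} We need that if $\sigma\subseteq\sigma'$, then $b_{\sigma'}$ pulls back to $b_\sigma$ under the natural hull embedding from $\tree{T}_\sigma$ into $\tree{T}_{\sigma'}$. We get this from very strong hull condensation of $\Sigma$. The tree $\tree{T}_\sigma{}^\frown b_\sigma$ is a weak hull of $\tree{T}_{\sigma'}{}^\frown b_{\sigma'}$ as soon as the branch pulls back, and the pullback branch is by $\Sigma$. By uniqueness of $\Sigma$-branches (the branch of a $\Sigma$-tree is unique), the two agree.

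Once coherence is in place, we define the branch on $\tree{T}$ by
\[
b=\bigcup_\sigma \pi_\sigma[b_\sigma].
\]
Uniqueness of a branch $b$ with $\tree{T}^\frown b$ by $\Sigma^+$ was already noted before the statement.

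\emph{Step 3 (the main obstacle): $b$ is cofinal and $M^{\tree{T}}_b$ is well-founded.} Cofinality follows from the fact that the $\sigma$ are cofinal in $\lambda$. Well-foundedness is the hard part. Suppose $M_b$ were ill-founded, and let $x$ witness this: a real coding a descending sequence (possible by countable choice for reals under $\AD$). Then some countable $\sigma$ containing the witness would make $M_{b_\sigma}^{\tree{T}_\sigma}$ ill-founded. This contradicts the fact that $\Sigma$ has well-founded models (clause 1(c) of \cref{defn - mouse pair}).

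The delicate point is that an ill-foundedness witness is a countable object only after a countable sub-hull is chosen. We therefore need the hulls to be taken in an elementary substructure containing the witness, together with the statement that $\tree{T}_\sigma{}^\frown b_\sigma$ is a weak hull of $\tree{T}^\frown b$. We then get that every countable weak hull of $\tree{T}^\frown b$ is by $\Sigma$, using the fact that any countable weak hull embeds into some $\tree{T}_\sigma{}^\frown b_\sigma$, together with very strong hull condensation.

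Where $\lambda<\Theta$ is needed is to guarantee that the family of $\sigma$ can be indexed by reals, so that the uniformization needed to pick the hulls and witnesses is available under $\AD^+$. For larger $\lambda$ this fails.
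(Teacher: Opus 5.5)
\textbf{The gap is in Step 2.} Steps 2 and 3 both rest on it, and it does not go through.

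Your hull-condensation argument applies only once you know that $\pi_{\sigma,\sigma'}^{-1}[b_{\sigma'}]$ is a cofinal branch of $\tree{T}_\sigma$. Nothing guarantees this. The branch $b_{\sigma'}$ is not an element of $\sigma'$, so elementarity of $\pi_{\sigma,\sigma'}$ says nothing about it, and the pullback can be bounded in $\lh(\tree{T}_\sigma)$. When $\mathrm{cf}(\lambda)>\omega$ the index map is not even cofinal.

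Concretely, take $\lambda=\omega_1$, which is below $\Theta$. Then $\tree{T}_\sigma=\tree{T}\restriction\alpha_\sigma$ with $\alpha_\sigma=\sigma\cap\omega_1$, $\pi_\sigma$ is the identity, and $b_\sigma=[0,\alpha_\sigma)_{\tree{T}}$, since $\tree{T}$ is by $\Sigma^+$. Your coherence claim then says exactly that $\alpha_\sigma<_{\tree{T}}\alpha_{\sigma'}$ whenever $\sigma\subseteq\sigma'$. That is, a club of countable ordinals is linearly ordered by $<_{\tree{T}}$, which already amounts to producing the cofinal branch you are trying to construct. So Step 2 is circular, and $\bigcup_\sigma\pi_\sigma[b_\sigma]$ need not be a branch at all.

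The same problem returns in Step 3 as the unproved assertion that every countable weak hull of $\tree{T}^\frown b$ factors through some $\tree{T}_\sigma{}^\frown b_\sigma$. Well-foundedness, by contrast, is not the hard part. Once $\tree{T}^\frown b$ is by $\Sigma^+$, you put an ill-foundedness witness into a countable hull and get a contradiction.

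\textbf{What is missing.} The paper cites Steel for this lemma rather than proving it. Still, any argument along your lines needs a way to choose among the $b_\sigma$ on a large set of $\sigma$, instead of demanding coherence for all pairs. This is where $\lambda<\Theta$ actually enters. It is not through uniformization: $\AD^+$ does not give that in general, and you do not need it, since the countable hulls can be taken as Skolem hulls in a well-orderable structure such as $\HOD_{\tree{T}}$.

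One way to supply the missing ingredient is as follows. Under $\AD$, $\omega_1$ is $\lambda$-supercompact for every $\lambda<\Theta$. Fix a normal fine measure $\mu$ on $\mathcal{P}_{\omega_1}(\lambda)$, transferred to countable elementary hulls of the well-orderable tree $\tree{T}$. Put $\xi\in b$ iff $\pi_\sigma^{-1}(\xi)\in b_\sigma$ for $\mu$-almost every $\sigma$. Then:
\begin{enumerate}
\item The ultrafilter property makes $b$ downward closed and linearly ordered.
\item Normality, applied to pressing-down functions such as $\sigma\mapsto\min(\pi_\sigma[b_\sigma]\setminus\gamma)$, makes $b$ cofinal. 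It also gives $\pi_\sigma[b_\sigma]\subseteq b$ for almost every $\sigma$.
\item Fineness and countable completeness show that any countable weak hull of $\tree{T}^\frown b$ is, for almost every $\sigma$, a weak hull of $\tree{T}_\sigma{}^\frown b_\sigma$.
\end{enumerate}
Very strong hull condensation then yields that $\tree{T}^\frown b$ is by $\Sigma^+$, and well-foundedness follows.
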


Therefore, under $\AD^+$, if $(P,\Sigma)$ is a least branch hod pair with scope $\textnormal{HC}$, then $(P, \Sigma^+\restriction V_\Theta)$ is a least branch hod pair with scope $V_\Theta$. 

In this paper the only determinacy models we will consider are $L(\mathbb{R})$ and those satsifying $\AD_\mathbb{R}+V=L(P(\mathbb{R}))+\textnormal{HPC}$. The HOD analysis has been carried out in both contexts. In $L(\mathbb{R})$, this is due to Steel and Woodin; under $\AD_\mathbb{R}+V=L(P(\mathbb{R}))+\textnormal{HPC}$, this is due to Steel. 

The following is the main theorem of \cite[\S 7]{hodas} (though not explicitly stated in this form).
\begin{thm}[Steel-Woodin, \cite{hodas}]\label{thm - hod L(R)}
    Assume $\AD+ V=L(\mathbb{R})$. Then there is a ms-indexed pure-extender premouse $H$ such that $V_\Theta\cap \HOD$ is the universe of $H|\Theta$ and there is a partial iteration strategy $\Lambda$ for $H$ and $\HOD=L[H, \Lambda]$.
\end{thm}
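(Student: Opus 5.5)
The plan is to reconstruct the Steel--Woodin analysis from \cite[\S 7]{hodas}. Under $\AD+V=L(\mathbb{R})$, fix a real $x$ and the directed system $\mathcal{F}$ of suitable premice and their countable iterates, as in Woodin's analysis. Here $\Theta$ is the least ordinal onto which $\mathbb{R}$ cannot be mapped ordinal-definably; in $L(\mathbb{R})$ this coincides with $\Theta$. The first step fixes a countable ms-indexed suitable premouse $P$, namely a suitable initial segment of $M_\omega$ below the sup of its Woodin cardinals, and lets its countable iterates via the unique $\Sigma^2_1$-guided (short-tree) strategy form a directed system under the iteration maps. Comparison together with the $\Sigma^2_1$-reflection/uniqueness of branches shows that the system is directed and that the maps commute. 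Hence the direct limit $M_\infty$ is well defined, and it is ordinal definable because the system is definable from no parameters in $L(\mathbb{R})$ after quotienting out the dependence on $x$ by a Vopěnka/genericity argument. This yields $H$ as the direct limit, which is an ms-indexed pure-extender premouse by elementarity of the direct limit maps.

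The second step, which I expect to be the main obstacle, shows that $M_\infty|\Theta$ and $V_\Theta\cap\HOD$ have the same universe. For $\subseteq$, everything in $M_\infty$ below its Woodin $\delta_\infty$ is OD, and $\delta_\infty=\Theta$. To show $\delta_\infty\geq\Theta$, represent each OD prewellordering of the reals by a term in some $P$ that is captured by the iterates. For $\supseteq$, take an OD set $A\subseteq\alpha<\Theta$. It is coded by an OD set of reals $B$ of Wadge rank below $\Theta$. Choose $P$ that strongly captures $B$ via its Woodin cardinals and genericity iterations, so that $B$ has a term $\tau_B\in P$ which moves correctly. Then $A$ is computed inside $M_\infty$ from the image of $\tau_B$ and the direct limit coordinates of the ordinals below $\alpha$. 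The hard part is verifying that the images of terms are independent of the chosen iterates and that ordinals below $\Theta$ are represented; both follow from the Dodd--Jensen-style minimality of the iteration maps in the system.

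The third step obtains $\HOD=L[H,\Lambda]$. Extend $H$ by continuing the direct limit to full $M_\omega$-type iterates, so that $H$ includes the part of $M_\infty$ above $\Theta$. Let $\Lambda$ be the restriction of the tail of the strategy of $M_\infty$ to trees in $M_\infty|\Theta$ that are based below $\Theta$, or equivalently the restriction to the relevant trees. This $\Lambda$ is OD because the direct limit system is OD, so $L[H,\Lambda]\subseteq\HOD$. For the converse, any set of ordinals in $\HOD$ is definable over some $L_\gamma(\mathbb{R})$ from ordinals. One then uses the fact that $L(\mathbb{R})$ is a symmetric extension of $M_\infty$ by the collapse of $\Theta$, with the derived model at $\Theta$ computed via $\Lambda$. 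This gives that the theory of $L(\mathbb{R})$ with ordinal parameters is computable in $L[H,\Lambda]$ by homogeneity of the collapse, and therefore $\HOD\subseteq L[H,\Lambda]$.
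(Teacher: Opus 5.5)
\textbf{There is a genuine gap in Step 3.} The paper does not reprove this theorem; it quotes it from Steel--Woodin \S7. The one feature of that argument it singles out (at the start of \cref{section - L(R)} and in the proof of \cref{thm - main L(R)}) is exactly what your proposal lacks.

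Your Steps 1--2 loosely outline Woodin's analysis of $V_\Theta\cap\HOD$. Step 3, however, is really an outline of the version that assumes an iterable $M_\omega^\#$ (\cref{thm - hod L(R) v2}), and you do not have that hypothesis. Under $\AD+V=L(\mathbb{R})$ alone there is no $M_\omega$-type mouse together with its strategy inside $L(\mathbb{R})$. As the paper remarks, no mouse pair whose direct limit yields all of $\HOD$ is a member of $L(\mathbb{R})$. So ``continuing the direct limit to full $M_\omega$-type iterates'' cannot be carried out in $L(\mathbb{R})$. You therefore have no argument that your extended $H$ and $\Lambda$ are OD there, which is needed for $L[H,\Lambda]\subseteq\HOD$. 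Nor do you have an argument that $L(\mathbb{R})$ is a derived model of anything you control, which is needed for $\HOD\subseteq L[H,\Lambda]$.

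Steel and Woodin get around this by reflection:
\begin{enumerate}
\item Take the least $\gamma$ such that $L_\gamma(\mathbb{R})$ is a sufficiently closed counterexample, and let $\theta=\Theta^{L_\gamma(\mathbb{R})}$.
\item Use this to obtain a countable pure-extender mouse pair $(M_0,\Sigma_0)$ with $\omega$ Woodin cardinals. Its direct limit along trees based below its least Woodin computes $V_\theta\cap\HOD^{L_\gamma(\mathbb{R})}$, and the relevant iteration lies in $L_\gamma(\mathbb{R})$.
\item Carry out the analysis of $\HOD^{L_\gamma(\mathbb{R})}$ with this pair, contradicting the choice of $\gamma$.
\end{enumerate}
Some such device is indispensable, and nothing in your proposal replaces it.

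\textbf{Step 3 also fails as described even if you grant $M_\omega$-type objects.}
\begin{itemize}
\item A derived model is taken at a limit of Woodin cardinals, i.e.\ via $\mathrm{Col}(\omega,{<}\lambda)$ for $\lambda$ the supremum of the $\omega$ Woodins. It is not obtained by collapsing $\Theta=\delta_\infty$.
\item Homogeneity computes $\HOD^{L(\mathbb{R})}$ inside whatever model $L(\mathbb{R})$ is a derived model of, and that model is not $L[H,\Lambda]$. To get $\HOD\subseteq L[H,\Lambda]$ you must evaluate the relevant forcing statements, with their ordinal parameters, inside $L[H,\Lambda]$. This is done by using $\Lambda$ to compute iteration maps that move those parameters into the direct limit, and your proposal does not address it.
\item That $\Lambda$ is OD does not follow from the direct limit system being OD. $\Lambda$ must pick branches of uncountable trees, including maximal ones, and showing these choices are OD is a genuine argument.
\end{itemize}

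\textbf{Step 1 cannot rest on a ``unique'' strategy.} Suitable premice have a single Woodin cardinal, and in $L(\mathbb{R})$ there is no canonical way to choose branches of maximal trees on them. Woodin's construction instead uses $s$-iterability and maps $\pi_{P,Q,s}$ on hulls $H^P_s$ that do not depend on the branch chosen. No real parameter or Vop\v{e}nka argument is needed, since the system of suitable $s$-iterable premice is already OD.
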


The following is one of the main theorems of \cite{mousepairs}.

\begin{thm}[{Steel, \cite{mousepairs}}]\label{thm - steel hod analysis}
    Assume $\AD_\mathbb{R}+V=L(P(\mathbb{R}))+  \textnormal{HPC}$. Then there is a least branch premouse $H$ such that $V_\Theta\cap \HOD$ is the universe of $H$ and $\HOD=L[H]$.
\end{thm}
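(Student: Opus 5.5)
The plan is to obtain the statement as a repackaging of Steel's analysis of $\HOD$ in \cite{mousepairs}, in the same way \cref{thm - hod L(R)} repackages \cite[\S 7]{hodas}. Under $\AD_\mathbb{R}+V=L(P(\mathbb{R}))+\textnormal{HPC}$, the hypothesis $\textnormal{HPC}$ says that every set of reals is Wadge below the strategy of some least branch hod pair $(P,\Sigma)$ with scope $\textnormal{HC}$. For each such pair, \cref{lemma - strategy extension} turns $(P,\Sigma^+\restriction V_\Theta)$ into a pair with scope $V_\Theta$. The idea is to let $H$ be a direct limit of these pairs.

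\textbf{Step 1 (the directed system).} Let $\mathcal{F}$ consist of the least branch hod pairs $(Q,\Psi)$ with scope $\textnormal{HC}$ that are iterates of pairs in the HPC family. Order $\mathcal{F}$ by $(Q,\Psi)\le(R,\Phi)$ if and only if $(R,\Phi)$ is a $\Psi$-iterate of $(Q,\Psi)$. The comparison theorem for mouse pairs from \cite{steel-book}, together with full normalization, shows that $\mathcal{F}$ is directed, at least after passing to the pairs that are ``full'' with respect to a fixed Wadge-initial segment. Strategy coherence (push-forward consistency and very strong hull condensation) shows that the iteration maps commute and do not depend on the iteration tree chosen. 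Let $H_\Gamma$ be the direct limit for the pairs whose strategy codes a set in a given Wadge-initial segment $\Gamma$. Then let $H$ be the union, or stack, of the $H_\Gamma$ over all $\Gamma$ below $\Theta$.

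\textbf{Step 2 ($H$ is a least branch premouse with universe $V_\Theta\cap\HOD$).} First, $H\subseteq\HOD$. The system is ordinal definable: it can be defined from the pointclass structure, and the choice of representatives does not matter up to iteration. For the reverse inclusion, take $A\in V_\Theta\cap\HOD$, coded as a subset of some $\kappa<\Theta$. By $\AD_\mathbb{R}$ and $\textnormal{HPC}$, $A$ is definable over some initial segment of the Wadge hierarchy. The main technical point is then Steel's computation that $\theta_\Gamma$ (the relevant Solovay ordinal) equals the image of the $\delta$ of the pair, and that ordinal definable subsets of $\theta_\Gamma$ lie in $H_\Gamma$. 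One proves this via a Vopěnka/Woodin-style genericity argument, or via the ``$\Sigma$-mouse capturing'' property of the pairs. The least branch condition says that the strategy predicate of $H$ restricts to the canonical tail strategies. This should follow because each $H_\Gamma$ inherits, through the direct limit maps, the strategy of its predecessors, and because the pairs move themselves correctly (condition 3 of \cref{defn - mouse pair}).

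\textbf{Step 3 ($\HOD=L[H]$).} Under $V=L(P(\mathbb{R}))$, every set of ordinals in $\HOD$ is ordinal definable over some $L_\alpha(P(\mathbb{R}))$. Using the fact that $\HOD\cap V_\Theta$ is $H$, together with a standard argument that $\HOD$ is determined by its restriction to $V_\Theta$ (via the Vopěnka algebra at $\Theta$, which lives in $V_\Theta\cap\HOD$ up to coding), we get $\HOD=L[H]$.

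The main obstacle is Step 2: proving that every ordinal definable bounded subset of $\Theta$ belongs to $H$, which amounts to the full $\HOD$-capturing computation. That is where the real content of \cite{mousepairs} lies. I would try to cite it as a black box, and the remaining steps are routine bookkeeping.
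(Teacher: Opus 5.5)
The paper gives no proof of this statement; it quotes it from Steel \cite{mousepairs}. Deferring the $\HOD$-capturing computation of Step 2 to that source is therefore in line with the paper. The gap is your claim that what remains is routine bookkeeping, specifically Step 3.

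\textbf{Step 3 does not go through.} There is no general argument that $\HOD$ is determined by $\HOD\cap V_\Theta$ under $\AD^+ + V=L(P(\mathbb{R}))$.
\begin{itemize}
\item In $L(\mathbb{R})$ (say with $M_\omega^\#$ existing, as in \cref{thm - hod L(R) v2}), $\HOD\cap V_\Theta$ is also the universe of a premouse, namely $M_\infty|\delta_\infty$.
\item But there $\HOD=L[M_\infty,\Lambda]$, and $M_\infty$ has measurable cardinals above $\delta_\infty=\Theta$.
\item Hence $(\HOD|\Theta)^\#\in\HOD$, while $(\HOD|\Theta)^\#\notin L[\HOD|\Theta]$.
\end{itemize}
This is why \cref{thm - hod L(R)} needs a premouse reaching past $\Theta$ together with a strategy predicate. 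The Vopěnka algebra does not close the gap. Grant that the algebra for adding a real lies in $L[H]$. This is plausible here, since under $\AD_\mathbb{R}$ every OD set of reals has Wadge rank below $\theta_0<\Theta$. Even so, it only shows that each $\HOD_x$ is a generic extension of $\HOD$. It gives no way to compute inside $L[H]$ the ordinal definable sets of ordinals, which are defined over all of $L(P(\mathbb{R}))$.

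\textbf{What is missing.} You need a way to recover $P(\mathbb{R})$, and with it the ordinal-parameter theory of $V$, from $L[H]$. One example would be to show that $V$ is a symmetric extension (a derived model) of $L[H]$ at $\Theta$; homogeneity would then put every OD set of ordinals into $L[H]$. Any such argument has to use exactly what separates this setting from $L(\mathbb{R})$:
\begin{itemize}
\item $\AD_\mathbb{R}$, which makes $\Theta$ a limit of the Solovay sequence, so that $o(H)=\Theta$;
\item the least-branch format, under which the strategies $\Sigma_\alpha$ (which, via $\textnormal{HPC}$, capture the sets of reals) are coded into the predicate of $H$.
\end{itemize}
This is part of the theorem you must cite, not bookkeeping.

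\textbf{A smaller inaccuracy in Step 1.} The iterates of all HPC pairs are not directed under ``is an iterate of''. Comparison only makes an iterate of one pair an initial segment of an iterate of the other. The direct limit has to be taken over the non-dropping iterates of a single pair (\cref{lemma - directedness}). One then shows two things, as recorded in \cref{thm - pc generators}:
\begin{itemize}
\item $M_\infty(P,\Sigma)$ does not depend on the choice of pair, which gives ordinal definability;
\item these limits are cutpoint initial segments of one another.
\end{itemize}
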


Unfortunately, we won't be able to simply quote these concise expressions of the HOD analyses in our proofs. To prove our theorem in $L(\mathbb{R})$, we will actually need the reflection argument used in the proof of \cref{thm - hod L(R)}. To prove our theorem under $\AD_\mathbb{R}+V=L(P(\mathbb{R}))+  \textnormal{HPC}$, we will need a refinement of \cref{thm - steel hod analysis}, which we will now state.
\begin{defn}\label{defn - cutpoint}
    For $(P,\Sigma)$ and $(Q,\Lambda)$ mouse pairs of the same type, we let $(P,\Sigma)\mo(Q,\Lambda)$ if $P\mo Q$ and $\Lambda_P=\Sigma$. We let $(P,\Sigma)\mo^*(Q,\Lambda)$ if $P$ is also a strong cutpoint initial segment of $Q$; that is, $P$ is passive $Q$ has no extenders overlapping $o(P)$.
\end{defn}

\begin{thm}[{Steel, \cite{mousepairs}}]\label{thm - pc generators}
    Assume $\AD_\mathbb{R}$ and  \textnormal{HPC}. Then there is a sequence $\langle (H_\alpha,\Sigma_\alpha)\mid\alpha<\eta\rangle$ of least branch hod pairs such that for every $\alpha<\eta$,
    \begin{enumerate}
    \item  the universe of $H_\alpha$ is $\HOD\cap V_{o(H_\alpha)}$ and $o(H_\alpha)$ is strongly inaccessible in $\HOD$ and closed under ultrapowers in $V$,
        \item\label{item - pc generators}  there is a countable least branch hod pair $(P, \Sigma)$ with scope $\textnormal{HC}$ such $(H_\alpha, \Sigma_\alpha)$ is an iterate of $(P, \Sigma^+\restriction V_\Theta)$ and $H_\alpha=M_\infty(P,\Sigma)$,
        \item for every $\alpha<\beta<\eta$, $(H_\alpha,\Sigma_\alpha)\mo^* (H_\beta, \Sigma_\beta)$,
        %define cutpoint initial segment
    \end{enumerate}
Finally, letting $H$ be the least branch premouse $\bigcup_{\alpha<\eta} H_\alpha$, we have $o(H)=\Theta$ and $L[H]=\HOD$.
\end{thm}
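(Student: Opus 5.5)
This theorem is attributed to Steel's \cite{mousepairs} and is essentially a restatement of the hod pair analysis under $\AD_\mathbb{R}+\textnormal{HPC}$. I would therefore prove it by unpacking that analysis rather than building anything new. The plan runs through Steel's comparison-based construction of $\HOD|\Theta$ as a direct limit of hod pairs.

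\textbf{Step 1: the direct-limit systems.} Under $\AD_\mathbb{R}$ and HPC, every $\Gamma\subsetneq P(\mathbb{R})$ that is a Wadge initial segment, with suitable closure, is captured by a countable least branch hod pair $(P,\Sigma)$ with scope $\textnormal{HC}$. Fix such a $\Gamma$ and consider all pairs $(Q,\Lambda)$ that are Wadge-equivalent to $(P,\Sigma)$. Comparison for mouse pairs, using full normalization and very strong hull condensation, makes this family directed under the iterate relation. Positionality and commutativity of iteration maps then make its direct limit $M_\infty(P,\Sigma)$ well defined and ordinal definable.

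\textbf{Step 2: the strategy on the limit.} Lemma~\ref{lemma - strategy extension} gives that $(P,\Sigma^+\restriction V_\Theta)$ is a hod pair with scope $V_\Theta$. The direct limit $M_\infty$ is then a $\Sigma^+$-iterate of $P$ via a normal tree of length $<\Theta$. This is obtained by normalizing the stack of comparison trees, and it yields the tail strategy $\Sigma_\alpha$. The point to check here is that this tail is independent of the choice of $(P,\Sigma)$ in the Wadge class; that independence follows from strategy coherence (pullback consistency).

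\textbf{Step 3: identifying the limit with $\HOD$ and stacking the pieces.} Let $\langle\theta_\alpha\rangle$ enumerate the Solovay-sequence-type cutpoints and set $H_\alpha=M_\infty$ for the corresponding $\Gamma_\alpha$. Then $o(H_\alpha)$ is the $\HOD$-ordinal attached to $\Gamma_\alpha$.

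To show that the universe of $H_\alpha$ equals $\HOD\cap V_{o(H_\alpha)}$:
\begin{itemize}
\item The inclusion of $H_\alpha$ into $\HOD$ holds by ordinal definability of the system.
\item For the converse, I would use Steel's result that $\HOD\cap V_{\theta_\alpha}$ is coded by OD sets of reals in $\Gamma_\alpha$. Each such set is captured in some iterate of $(P,\Sigma)$, so it lands in $M_\infty$.
\end{itemize}

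Inaccessibility of $o(H_\alpha)$ in $\HOD$ comes from its being a limit of Woodin-type cutpoints. Closure under ultrapowers in $V$ follows from $o(H_\alpha)$ being a Solovay-type ordinal $\theta_\alpha$ together with Kunen's theorem, which makes ultrafilters on ordinals OD. For $\alpha<\beta$, $H_\alpha\mo^* H_\beta$ with $(\Sigma_\beta)_{H_\alpha}=\Sigma_\alpha$. This holds because the pair for $\Gamma_\beta$ has a strong-cutpoint initial segment Wadge-equivalent to the pair for $\Gamma_\alpha$, and direct limits commute with restricting to that segment.

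\textbf{Step 4 and the main obstacle.} Finally, take the union $H=\bigcup_{\alpha<\eta} H_\alpha$. Then $o(H)=\Theta$ since the $\theta_\alpha$ are cofinal in $\Theta$. And $L[H]=\HOD$ because, under $V=L(P(\mathbb{R}))$, $\HOD$ is generated by $\HOD\cap V_\Theta$ via Vopěnka-style coding of OD sets of ordinals by OD sets of reals. I expect the main obstacle to be the converse inclusion in Step 3, i.e.\ that every element of $\HOD\cap V_{o(H_\alpha)}$ actually lies in $M_\infty$. This requires the full strength of HPC and the comparison theory, and here I would simply cite \cite{mousepairs}.
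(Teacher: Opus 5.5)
The paper gives no proof of this theorem; it imports it from Steel \cite{mousepairs}. So deferring the inclusion $\HOD\cap V_{o(H_\alpha)}\subseteq M_\infty$ to that paper matches what the authors do. The problems are in the steps you argue yourself, above all clause 1. If, as Step 3 suggests, $o(H_\alpha)$ ranges over the points of the Solovay sequence, both of your justifications fail.

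\textbf{Inaccessibility.} A limit of Woodin cardinals of $\HOD$ is a strong limit there but need not be regular. Since $\langle\theta_\beta\mid\beta<\Omega\rangle$ is OD, $\theta_\omega$ (when it is below $\Theta$) has cofinality $\omega$ in $\HOD$. Likewise $\theta_{\omega_1}$ has $\HOD$-cofinality $\omega_1$. Successor points $\theta_{\beta+1}$ are inaccessible in $\HOD$ because they are Woodin there, not because they are limits of Woodins; $\theta_0$ is not a limit of Woodins at all.

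\textbf{Closure under ultrapowers.} Kunen's theorem (ultrafilters on ordinals are OD) says nothing about whether $i_U$ fixes a given $\theta_\beta$, and in general it does not. Suppose the Solovay sequence is longer than $\omega_1$, and let $U$ be the club filter on $\omega_1$. For each $\xi<\theta_{\omega_1}$, the function $\gamma\mapsto\theta_\gamma$ exceeds $\xi$ on a tail. Hence it represents an ordinal in $[\theta_{\omega_1},i_U(\theta_{\omega_1}))$, so $i_U(\theta_{\omega_1})>\theta_{\omega_1}$. The corresponding level then satisfies $i_U(H_\alpha)\not\subseteq H_\alpha$, which is exactly what the proof of \cref{thm - main v1} needs.

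So the heights must be a carefully chosen cofinal subsequence, and you must show such points are cofinal in $\Theta$. Closure has to come from a boundedness computation, for example regularity plus a Coding Lemma bound on $i_U(\xi)$ for $\xi$ below the point. The points must also be strong cutpoints of $H$, which is not automatic once $H$ has extenders overlapping Woodin cardinals. Your sketch does none of this, and your claim that $\mo^*$ holds because of a Wadge-equivalent strong-cutpoint initial segment is likewise unsupported.

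Two further justifications are wrong as stated.

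First, $L[H]=\HOD$ does not follow from $V=L(P(\mathbb{R}))$ plus Vop\v{e}nka-style coding. Consider $L(\mathbb{R})$ under the hypotheses of \cref{thm - hod L(R) v2}. There $\HOD=L[M_\infty,\Lambda]$ contains $M_\infty$, hence the sharp of $M_\infty|\delta_\infty=\HOD\cap V_\Theta$, so $\HOD\neq L[\HOD\cap V_\Theta]$. The $\AD_\mathbb{R}$ case is a genuine theorem of Steel. It uses that $\Theta$ is a limit of Solovay points and that the least branch premouse $H$ encodes the strategies of its initial segments.

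Second, the direct-limit system needs correcting:
\begin{itemize}
\item $M_\infty(P,\Sigma)$ is the direct limit of the countable iterates of one fixed pair. The family of all Wadge-equivalent pairs is not directed under non-dropping iteration; for instance $P$ and a passive one-step extension of $P$ have no common non-dropping iterate.
\item That system is an uncountable directed system, not a countable stack. So ``normalizing the stack of comparison trees'' does not produce $M_\infty$ as a $\Sigma^+$-iterate; that requires a reflection argument of the kind used for \cref{cor - iteration criterion 2}.
\end{itemize}
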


This analysis of $\HOD$ determines a natural partial iteration strategy for normal iteration trees on $\HOD$ viewed as least branch premouse which we denote $\Sigma_\HOD$. A normal tree $\tree{T}$ is by $\Sigma_\HOD$ if for any limit ordinal $\lambda< \lh(\tree{T})$, there is an $\alpha<\eta$ such that $\tree{T}\restriction\lambda$ is based on $H_\alpha$ and $[0,\lambda)_\tree{T}=\Sigma_\alpha(\tree{T}\restriction\lambda)$.

\subsection{Full normalization}
The full normalization of a stack of normal trees $\vec{\tree{S}}$ on a mouse pair $(P,\Sigma)$ is defined by recursion using the normalization process for stacks of length two at successor stages and taking direct limits at limit stages. One of the main theorems of \cite{Farmer} and \cite{fullnormalization} is that if $(P,\Sigma)$ is a mouse pair with very strong hull condensation, this process does not break down and produces a single normal tree $X(\vec{\tree{S}})$ on $(P,\Sigma)$ with the same last model and main branch embedding as $\vec{\tree{S}}$ (in the case these are defined).
We will only need to consider stacks of length at most $\omega$ in this paper, so we briefly discuss the length two case and the direct limit process for the special case of stacks of length $\omega$.

First, given a stack of length two $\langle\tree{S},\tree{T}\rangle$ on $(P,\Sigma)$ such that $\tree{T}$ has successor length, the full normalization $\tree{X}=X(\tree{S},\tree{T})$ is a single normal tree on $(P,\Sigma)$ such that $M_\infty^\tree{X}=M_\infty^\tree{T}$. If $\tree{T}$ does not drop along its main branch, the normalization process also produces a weak tree embedding $\Phi:\tree{S}\to \tree{X}$, a certain kind of system of embeddings which embeds the iteration tree structure of $\tree{S}$ into that of $\tree{X}$.  
The definition of this weak tree embedding is quite involved but we will need very little about it, which we collect below. We refer the reader to \cite{Farmer} and \cite{fullnormalization} for further details. If $\tree{S}$ also doesn't drop along its main branch, then we additionally have that $i^\tree{T}_{0,\infty}\circ i^\tree{S}_{0,\infty}=i^\tree{X}_{0,\infty}$.

One component of the weak tree embedding $\Phi:\tree{S}\to \tree{X}$ is the $u$-map $u^\Phi$, an injective map from $\lh(\tree{S})$ into $\lh(\tree{X})$. Roughly, the $u$-map keeps track of an association between the exit extenders of $\tree{S}$ and those of $\tree{X}$, determined by the rest of $\Phi$. We'll only use a special case of this association, recorded in the following lemma.
\begin{lma}\label{lemma - tree emb}
Suppose $\langle\tree{S},\tree{T}\rangle$ is a stack of normal trees with a last model on a mouse pair $(P,\Sigma)$. Assume that $\tree{T}$ doesn't drop along its main branch. Let $\tree{X}=X(\tree{S},\tree{T})$ and $\Phi:\tree{S}\to \tree{X}$ be the associated weak tree embedding. Suppose that $\alpha+1<\lh(\tree{X})$ is such that $\lh(E_\alpha^\tree{X})>\lh(E_\xi^\tree{T})$ for all $\xi+1<\lh(\tree{T})$. Then $\alpha\in \ran(u^\Phi)$. Moreover, letting $\bar{\alpha}=(u^\Phi)^{-1}(\alpha)$, $i^{\tree{T}}_{0,\infty}$ restricts to a cofinal elementary map from $M_{\bar{\alpha}}^\tree{S}|\lh(E_{\bar{\alpha}}^\tree{S})$ into $M_{{\alpha}}^\tree{X}|\lh(E_{{\alpha}}^\tree{X})$.
\end{lma}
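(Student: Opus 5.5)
The argument will proceed by induction on the length of $\tree{T}$, following the recursive definition of the full normalization $X(\tree{S},\tree{T})$ from \cite{Farmer} and \cite{fullnormalization}. I will define $\tree{X}_\xi=X(\tree{S},\tree{T}\restriction\xi+1)$ with weak tree embeddings $\Phi_\xi:\tree{S}\to\tree{X}_\xi$. Since $\tree{T}$ does not drop along its main branch, none of the relevant initial segments along $[0,\infty)_\tree{T}$ drop. At a successor step, $\tree{X}_{\xi+1}$ is obtained from $\tree{X}_\xi$ and $\tree{X}_{\eta}$, where $\eta=\tree{T}\text{-pred}(\xi+1)$, as the one-step normalization $V(\tree{X}_\eta,\tree{X}_\xi,F)$ with $F=E^\tree{T}_\xi$. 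At limit stages one takes direct limits along branches of $\tree{T}$.

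The structural fact I would establish and carry through the induction is the following. The tree $\tree{X}_\xi$ agrees with $\tree{X}_\eta$ below the stage where $F$ is applied. Past that stage, every exit extender of $\tree{X}_{\xi+1}$ either
\begin{enumerate}
\item is $E^\tree{T}_\zeta$ for some $\zeta\le\xi$, or has length at most $\lh(E^\tree{T}_\zeta)$ for some such $\zeta$, or
\item is of the form $i_F$-image of (the copy in $\tree{X}_\eta$ of) an exit extender of $\tree{X}_\eta$ beyond the critical point. In this case the corresponding $u$-map sends the $\tree{S}$-index to this new position, and the copy map restricted to $M|\lh(E)$ is $i^{M}_F$, which is cofinal in the image $M'|\lh(E')$.
\end{enumerate}
This is exactly the definition of $u^{\Phi_{\xi+1}}$ via the copying construction in the normalization.

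Now suppose $\alpha$ satisfies $\lh(E^\tree{X}_\alpha)>\lh(E^\tree{T}_\zeta)$ for all $\zeta$. Then $E^\tree{X}_\alpha$ cannot be one of the extenders of type 1, since those have length bounded by some $\lh(E^\tree{T}_\zeta)$. So it is a copied extender, and tracing back along the main branch $[0,\infty)_\tree{T}$ it comes from an exit extender $E^\tree{S}_{\bar\alpha}$ with $u^\Phi(\bar\alpha)=\alpha$. The relevant restriction of the copy map is a composition of the maps $i^\tree{T}_{\eta,\xi+1}$ restricted to the appropriate levels, together with direct limits at limit stages. Hence it is $i^\tree{T}_{0,\infty}$ restricted to $M^\tree{S}_{\bar\alpha}|\lh(E^\tree{S}_{\bar\alpha})$. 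Elementarity follows because each one-step map is an ultrapower map. Cofinality follows because at each step the ultrapower of $M|\lh(E)$ by $F$ is formed with functions in that level, and direct limits preserve cofinality.

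I expect the main obstacle to be justifying that the copy map on $M^\tree{S}_{\bar\alpha}|\lh(E^\tree{S}_{\bar\alpha})$ literally equals $i^\tree{T}_{0,\infty}$ there. This needs the agreement of $i^\tree{T}_{0,\infty}$ with the ultrapower by $F$ on levels above the critical point, together with the fact that $M^\tree{S}_{\bar\alpha}|\lh(E^\tree{S}_{\bar\alpha})$ is an initial segment of the last model of $\tree{S}$ below the relevant bound. The second point holds by normality of $\tree{S}$ up to the first differing extender. I would handle this by coherence/agreement of models along $\tree{T}$, citing the corresponding lemmas in \cite{fullnormalization}.
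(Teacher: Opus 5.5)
The paper gives no proof of this lemma. It records it as a black-box property of the weak tree embedding produced by full normalization, and refers the reader to Schlutzenberg's and Steel's work.

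Your proposal reconstructs the argument behind it, and it is sound in outline. You induct along $[0,\infty)_{\mathcal T}$, with $\Phi$ obtained by composing the one-step embeddings $\mathcal X_\eta\to\mathcal X_{\xi+1}$ and taking direct limits at limit stages. (So $u^{\Phi_{\xi+1}}$ is the one-step $u$-map composed with $u^{\Phi_\eta}$, not the one-step map itself.)

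The two routes buy different things:
\begin{itemize}
\item The citation buys brevity. It hides the genuinely fiddly fine structure: possible drops at the node $\beta$ to which $F$ is applied, and the definition of weak tree embeddings and their direct limits.
\item Your route shows why the two hypotheses are exactly what is needed. The length bound excludes every node at or before the use of some $E^{\mathcal T}_\xi$. Non-dropping makes each $F$ total on $M^{\mathcal T}_\eta$, so the one-step copy maps at copied nodes are restrictions of $i^{\mathcal T}_{\eta,\xi+1}$.
\end{itemize}
That last identification is still imported from the literature, so your sketch is not independent of it.

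One step you should make explicit is the ``tracing back.'' To apply the induction hypothesis in $\mathcal X_\eta$, the preimage $\gamma$ of $\alpha$ must satisfy the length hypothesis relative to $\mathcal T\restriction\eta+1$. Your dichotomy alone does not give this, since a short extender of $\mathcal X_\eta$ could a priori have a long $i_F$-image. It does hold, as follows.
\begin{itemize}
\item Copied nodes are those $\gamma\ge\beta$, where $\beta$ is least with $\mathrm{crit}(F)<\lambda(E^{\mathcal X_\eta}_\beta)$.
\item Since $\eta$ is the $\mathcal T$-predecessor of $\xi+1$, normality of $\mathcal T$ gives $\mathrm{crit}(F)\ge\lambda(E^{\mathcal T}_\zeta)$ for all $\zeta<\eta$.
\item A short computation of cardinals in $M^{\mathcal T}_\eta$ then yields $\lh(E^{\mathcal X_\eta}_\gamma)>\lh(E^{\mathcal T}_\zeta)$ for all $\zeta<\eta$.
\end{itemize}

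At a limit stage $\theta$, write $\alpha=u^{\Psi_{\eta,\theta}}(\gamma)$ for some $\eta<_{\mathcal T}\theta$, where $\Psi_{\eta,\theta}:\mathcal X_\eta\to\mathcal X_\theta$ is the direct-limit embedding. An uncopied node (one with $\lambda(E_\gamma)\le\mathrm{crit}(F)$) is fixed by $u$ and keeps its extender. It also stays uncopied at every later step along the branch, because later critical points are at least $\lambda(F)$. So if $\alpha$'s preimage were ever uncopied, $E^{\mathcal X}_\alpha$ would be shorter than $F$, contradicting the hypothesis. Hence the preimage chain consists of copied nodes, and the bullets above give the length hypothesis needed at each stage. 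With these observations the induction closes.
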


If $\langle\tree{S},\tree{T}\rangle$ is a stack of normal trees with a last model on a least branch hod pair $(P,\Sigma)$ and $\tree{T}$ \textit{does} drop along its main branch, then the exit extenders of $\tree{T}$ are used cofinally in $\tree{X}=X(\tree{S},\tree{T})$ so there can be no $\alpha+1<\lh(\tree{X})$ is such that $\lh(E_\alpha^\tree{X})>\lh(E_\xi^\tree{T})$ for all $\xi+1<\lh(\tree{T})$. Therefore the assumption in \cref{lemma - tree emb} that $\tree{T}$ does not drop causes no real loss of generality. (However this assumption is needed to ensure that there is a \textit{total} weak tree embedding from $\tree{S}$ into $\tree{X}$.)

We need another basic fact about normalizing stacks of length two.

\begin{lma}\label{lemma - drops}
 Suppose $\langle\tree{S},\tree{T}\rangle$ is a stack of normal trees with a last model on a mouse pair $(P,\Sigma)$ and $\tree{T}$ does drop along its main branch. Let $\tree{X}=X(\tree{S},\tree{T})$ and $\Phi:\tree{S}\to \tree{X}$ be the associated weak tree embedding. Then every exit extender of $\tree{T}$ appears in $\tree{X}$ and such extenders appear cofinally in $\tree{X}$. As a consequence, there can be no $\alpha+1<\lh(\tree{X})$ is such that $\lh(E_\alpha^\tree{X})>\lh(E_\xi^\tree{T})$ for all $\xi+1<\lh(\tree{T})$. Moreover, for all $\xi+1<\lh(\tree{T})$, letting  $\alpha+1$ be such that $E_{\alpha}^{\tree{X}}=E_\xi^\tree{T}$, $\alpha\not\in \ran(u^\Phi)$. 
\end{lma}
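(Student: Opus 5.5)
The plan is to unwind the recursive definition of the full normalization $X(\tree{S},\tree{T})$ from \cite{Farmer,fullnormalization} by induction on $\lh(\tree{T})$. Write $\tree{X}_\xi=X(\tree{S},\tree{T}\restriction\xi+1)$ for $\xi<\lh(\tree{T})$ and $\Phi_\xi:\tree{S}\to\tree{X}_\xi$ for the associated partial weak tree embeddings. Here $\Phi_\xi$ is total exactly when $[0,\xi]_\tree{T}$ does not drop. We have $\tree{X}_{\xi+1}=V(\tree{X}_\xi,\tree{T}_\xi,F)$ with $F=E^\tree{T}_\xi$. This tree is obtained as follows. Keep $\tree{X}_\xi\restriction(\gamma+1)$, where $\gamma$ is least with $F$ on the sequence of $M^{\tree{X}_\xi}_\gamma$ and $\lh(F)<\lh(E^{\tree{X}_\xi}_\gamma)$. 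Then append $F$ as the exit extender at $\gamma$, and copy the rest of $\tree{X}_{\beta}$ through $F$, where $\beta=\tree{T}\text{-pred}(\xi+1)$. So every exit extender of $\tree{T}$ enters at some stage as an exit extender of the normalization.

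First I will check that such extenders are never removed later. This holds because later extenders of $\tree{T}$ have length $>\lh(F)$, by normality. Consequently the cut point $\gamma'$ at any later stage lies above the node where $F$ sits, and that initial segment is preserved. The same reasoning applies at limit stages of $\tree{T}$, which are direct limits of the $\tree{X}_\xi$ along $[0,\lambda)_\tree{T}$, since the relevant initial segments stabilize. This gives the first claim: every $E^\tree{T}_\xi$ appears in $\tree{X}$ at a node $\alpha_\xi$, and $\xi\mapsto\alpha_\xi$ is increasing.

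Next I will use the drop. Since $\tree{T}$ drops along its main branch, there is a last drop at some $\xi_0+1\in[0,\infty)_\tree{T}$. After that drop, the copy of $\tree{S}$ is truncated. Concretely, when $\tree{X}_{\xi_0+1}$ is formed, the tail copied through $F$ is only the part of $\tree{X}_\beta$ needed to reach the dropping model, which is a node of $\tree{X}_\beta$ followed by nothing. Hence $\tree{X}_{\xi_0+1}$ ends at the node where the image of $M^\tree{T}_{\xi_0+1}$ sits. From that point on, each later step $\eta$ with $\eta\ge\xi_0+1$ on the branch only adds nodes whose exit extenders are $E^\tree{T}_\eta$, up to the final model. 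Therefore the last model of $\tree{X}$ is $M^\tree{T}_\infty$, and it sits directly above the $\alpha_\xi$'s. From this I will conclude that the $\alpha_\xi$ are cofinal in $\lh(\tree{X})$. Any $\alpha$ with $\alpha+1<\lh(\tree{X})$ is below some $\alpha_\xi$, so $\lh(E^\tree{X}_\alpha)\le\lh(E^\tree{T}_\xi)$ by normality of $\tree{X}$. This yields the stated consequence.

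For the final clause, I will recall that $u^\Phi$ is defined by composing the $u$-maps of the $\Phi_\xi$. At the step adding $F$, the new $u$-map sends nodes of $\tree{S}$ either below $\gamma$ (unchanged) or into the copied tail, which lies strictly above $\gamma$. By construction, $\gamma$ itself (now carrying $F$) is not of the form $u(\bar\alpha)$ with $E^\tree{X}_\gamma$ the image of $E^\tree{S}_{\bar\alpha}$: the copied image of $E^{\tree{X}_\xi}_\gamma$ is moved to the node above. Since later stages preserve this segment and adjust $u$ only above later cut points, $\alpha_\xi\notin\ran(u^\Phi)$. The main obstacle I expect is the bookkeeping at limit stages of $\tree{T}$ and on the dropping branch, where $\Phi$ is only partial. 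There I will argue directly from the definition of $u^\Phi$ as a partial map defined on the part of $\tree{S}$ that survives the drop, to make sure the preserved segments and the $u$-maps cohere in the direct limit.
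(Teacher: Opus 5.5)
The paper gives no proof of this lemma; it quotes it as a basic fact about full normalization. So the question is whether your unwinding of the definitions is complete. Your argument for the first claim is fine. The argument for the ``moreover'' clause, however, only works when $\xi+1$ lies on the main branch of $\tree{T}$.

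The embedding $\Phi$ is the composite of the one-step embeddings $\tree{X}_\eta\to\tree{X}_{\zeta+1}$, with $\eta=\tree{T}\textnormal{-pred}(\zeta+1)$, taken only for $\zeta+1\in[0,\infty]_{\tree{T}}$ (with direct limits at limit points). If $\xi+1$ is off that branch, $\Phi_{\xi+1}$ is not a factor of $\Phi$, and knowing $\alpha_\xi\notin\ran(u^{\Phi_{\xi+1}})$ tells you nothing. The mechanism you cite also fails in this case. The one-step $u$-map at $\zeta+1$ is the identity only below $\beta_\zeta$, the node to which $E^{\tree{T}}_\zeta$ is applied. Since $\beta_\zeta$ can be $\le\alpha_\xi$, later stages do change $u$ below $\alpha_\xi$.

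The missing argument runs as follows.
\begin{itemize}
\item Let $\zeta+1$ be the least element of $[0,\infty]_{\tree{T}}$ above $\xi$. Then $\eta\le\xi$, so $\mathrm{crit}(E^{\tree{T}}_\zeta)<\lambda(E^{\tree{T}}_\eta)\le\lambda(E^{\tree{T}}_\xi)$.
\item This forces $\beta_\zeta\le\alpha_\xi\le\alpha_\zeta$. So $\alpha_\xi$ lies in the interval $[\beta_\zeta,\alpha_\zeta]$ that this step's $u$-map omits: it fixes nodes below $\beta_\zeta$ and sends the rest above $\alpha_\zeta$.
\item Each later step along the branch likewise fixes nodes below its own $\beta$ and sends the rest above its own $\alpha$, which is $>\alpha_\xi$. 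Direct limits preserve this.
\item So the only possible preimage of $\alpha_\xi$ at each later step is $\alpha_\xi$ itself, and hence $\alpha_\xi\notin\ran(u^\Phi)$.
\end{itemize}
This argument never uses the drop. That matters: the proof of Theorem~\ref{thm - least common iterate} applies the clause to an arbitrary exit extender of a non-dropping second tree.

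Your cofinality argument asserts its key step rather than proving it: that after a drop on the branch nothing further is copied. Two things need to be shown.

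First, at the dropping step $\xi_0+1$, with $\eta=\tree{T}\textnormal{-pred}(\xi_0+1)$, let $P\mo M^{\tree{T}}_\eta$ be the level to which $E^{\tree{T}}_{\xi_0}$ is applied. Then $P$ projects to or below $\mathrm{crit}(E^{\tree{T}}_{\xi_0})<\lambda(E^{\tree{X}_\eta}_{\beta_{\xi_0}})$. If $\beta_{\xi_0}$ is not the last node of $\tree{X}_\eta$, then $\lh(E^{\tree{X}_\eta}_{\beta_{\xi_0}})$ remains a cardinal of the last model of $\tree{X}_\eta$. Hence $P\mo M^{\tree{X}_\eta}_{\beta_{\xi_0}}|\lh(E^{\tree{X}_\eta}_{\beta_{\xi_0}})$, so the normalized tree drops to the same $P$ at $\alpha_{\xi_0}+1$ and ends there.

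Second, at a later non-dropping step $\zeta+1$ on the branch, with $\eta=\tree{T}\textnormal{-pred}(\zeta+1)$, you need $\beta_\zeta$ to be the last node of $\tree{X}_\eta$. This holds by induction: every other node of $\tree{X}_\eta$ is $\le\alpha_{\zeta'}$ for some $\zeta'<\eta$. So its exit extender has $\lambda\le\lambda(E^{\tree{T}}_{\zeta'})\le\mathrm{crit}(E^{\tree{T}}_\zeta)$, by minimality of $\eta$.

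Also, your claim that each later step ``only adds nodes whose exit extenders are $E^{\tree{T}}_\eta$'' is not accurate. The tree $\tree{X}_{\zeta+1}=(\tree{X}_\zeta\restriction\alpha_\zeta+1)^\frown\langle E^{\tree{T}}_\zeta\rangle$ may contain nodes copied at off-branch stages. What is true, and suffices, is that every non-last node is $\le$ some $\alpha_\zeta$.
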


Finally, we need to look more closely at normalizing a stack of length two. To normalize $\langle\tree{S}, \tree{T}\rangle$, we proceed by induction on $\lh(\tree{T})$, forming the auxiliary normalizations of $\langle \tree{S}, \tree{T}\restriction \xi+1\rangle$ and associated weak tree embeddings between these trees along the tree-order of $\tree{T}$. At limit stages of $\tree{T}$, we take direct limits corresponding to the branch choices of $\tree{T}$. The following technical lemma shows that we can actually recover the branch choices of $\tree{T}$ from the branches chosen in the direct limit tree (or, more importantly, in the final full normalization of  $\langle\tree{S}, \tree{T}\rangle$).\footnote{In \cite[Section 6.6]{steel-book}, the analogous statement is established for embedding normalization, but the same arguments work for full normalization.} 
\begin{prp}[{Steel \cite[Section 6.6]{steel-book}, Schlutzenberg \cite{fullnormalization}}]\label{prop - branch uniqueness}
    Suppose $\langle\tree{S},\tree{T}\rangle$ is a stack of normal trees on a premouse $P$ and that $\tree{T}$ has limit length. For $\xi<\lh(\tree{T})$, let $\tree{X}_\xi=X(\tree{S},\tree{T}\restriction \xi+1)$ and let $\alpha_\xi+1<\lh(\tree{X}_\xi)$ be such that $E_{\alpha_\xi}^{\tree{X}_\xi}=E_\xi^\tree{T}$. Let $\tree{X}=\bigcup_{\xi<\lh(\tree{T})} \tree{X}_\xi\restriction\alpha_\xi+1$. Then for any cofinal wellfounded branch $b$ of $\tree{X}$, there is a unique cofinal branch $c$ of $\tree{T}$ such that $\tree{X}^\frown b\moq X(\tree{S},\tree{T}^\frown c)$.
\end{prp}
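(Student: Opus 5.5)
Throughout, fix $\langle\tree{S},\tree{T}\rangle$, the auxiliary normalizations $\tree{X}_\xi$, the indices $\alpha_\xi$, and the tree $\tree{X}$ as in the statement. The plan is to read the branch $c$ of $\tree{T}$ off $b$ by tracking where the exit extenders of $\tree{T}$ sit inside $\tree{X}$, and to get uniqueness from the fact that distinct branches of $\tree{T}$ separate at some exit extender.

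\textbf{Step 1: $\tree{X}$ is a normal tree.} The trees $\tree{X}_\xi$ cohere: for $\eta<\xi$, $\tree{X}_\xi\restriction\alpha_\eta+1=\tree{X}_\eta\restriction\alpha_\eta+1$, and $\alpha_\eta<\alpha_\xi$. This holds because $\tree{X}_\xi$ is formed from the weak tree embedding of $\tree{X}_{\eta'}$ into it, where $\eta'=\tree{T}\text{-pred}(\xi)$. That embedding is the identity below the point where $E^{\tree{T}}_{\eta'}$ is used, and the exit extenders of $\tree{T}$ have increasing lengths. So $\tree{X}$ is a normal tree of limit length $\sup_\xi\alpha_\xi$, and the $E^{\tree{T}}_\xi$ occur cofinally in it.

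\textbf{Step 2: construct $c$.} Let $b$ be a cofinal well-founded branch of $\tree{X}$. Put $\xi\in c$ iff $\xi<\lh(\tree{T})$ and $\alpha_\xi$ is an ``image'' of $M^{\tree{T}}_\xi$'s extender along $b$. Concretely, let $\beta_\xi$ be the node of $\tree{X}_\xi$ carrying the model whose last tree-embedding image is $M^{\tree{T}}_\xi$; this is the $\tree{X}_\xi$-node with $M^{\tree{X}_\xi}_{\beta_\xi}=M^{\tree{T}}_\xi$ when $\tree{X}_\xi$ has last model. Set $\xi\in c$ iff $b$ passes through the node of $\tree{X}$ at which $E^{\tree{T}}_\xi$ is applied, i.e.\ $\alpha_\xi+1\in b$ or the appropriate predecessor condition holds.

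Then check two things. First, $c$ is downward closed and linearly ordered by $<_{\tree{T}}$. This uses the inductive description of full normalization: the $\tree{T}$-predecessor of $\xi+1$ is determined by the critical point of $E^{\tree{T}}_\xi$, and this matches the $\tree{X}$-predecessor of $\alpha_\xi+1$ via the $u$-maps of the weak tree embeddings $\Phi_{\eta,\xi}\colon\tree{X}_\eta\to\tree{X}_\xi$. Second, $c$ is cofinal, which follows from the cofinality established in Step 1.

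\textbf{Step 3: show $\tree{X}^\frown b\moq X(\tree{S},\tree{T}^\frown c)$.} The normalization $X(\tree{S},\tree{T}^\frown c)$ is the direct limit of the $\tree{X}_\xi$ for $\xi\in c$ under the $\Phi_{\eta,\xi}$. Its initial segment of length $\sup_\xi\alpha_\xi$ is $\tree{X}$, and the branch chosen there is generated by the images of the branches through the nodes $\alpha_\xi+1$ for $\xi\in c$. By construction this is exactly $b$.

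\textbf{Step 4: uniqueness.} Suppose $c\neq c'$ are distinct cofinal branches. Pick $\xi\in c\setminus c'$ large enough past their splitting point. The direct limit along $c'$ places $b'$ off the node $\alpha_\xi+1$ at cofinally many such $\xi$, while $b$ goes through it. Hence $b\neq b'$.

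\textbf{Main obstacle.} The hard part is the bookkeeping in Step 2. One has to verify, through the recursive definition of full normalization (including the drop and non-drop cases and the meta-tree structure), that tree-order in $\tree{X}$ restricted to the nodes $\alpha_\xi+1$ faithfully reflects tree-order in $\tree{T}$. I would follow Steel's argument for embedding normalization in \cite[Section 6.6]{steel-book}, replacing its tree embeddings with Schlutzenberg's weak tree embeddings and checking the one new case, where extenders of $\tree{S}$ are inserted between those of $\tree{T}$.
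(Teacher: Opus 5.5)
The paper gives no argument of its own here. It cites Steel's proof for embedding normalization (and Schlutzenberg) and remarks that it transfers to full normalization. Your outline aims at that argument, but there is a genuine gap: the one concrete thing you commit to, the definition of $c$ in Step~2, is wrong, and the steps that carry the content are only asserted.

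\textbf{The definition of $c$ is wrong.} Write $\beta_\xi$ for the $\tree{X}$-predecessor of $\alpha_\xi+1$ (not your $\beta_\xi$). In $\tree{X}_{\xi+1}$, the nodes copied after $\alpha_\xi+1$ can have predecessors below $\beta_\xi$. So the branch leading to the last model may bypass $\alpha_\xi+1$ even though $E^{\tree{T}}_\xi$ is applied along $c$. For example:
\begin{itemize}
\item Let $\tree{S}$ use $E_0$ and then $E_1$ with $\mathrm{crit}(E_1)<\lambda(E_0)$, so its last node $2$ lies directly above $0$.
\item Let $\mathrm{crit}(E^{\tree{T}}_0)\in[\lambda(E_0),\lambda(E_1))$. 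Then $\tree{X}_1$ has nodes $0,\ldots,4$. Node $\alpha_0+1=3$ lies above $1$, while the last node $4$, reached by the image of $E_1$, lies directly above $0$.
\item Continue $\tree{T}$ linearly by extenders whose critical points exceed all earlier $\lambda$'s.
\end{itemize}
Then $\tree{X}$ has the unique cofinal branch $\{0,4,5,6,\ldots\}$, induced by $c=\lh(\tree{T})$. It omits $\alpha_0+1$ although $1\in c$. So neither ``$\xi\in c$ iff $\alpha_\xi+1\in b$'' nor ``$\xi+1\in c$ iff $\alpha_\xi+1\in b$'' is right, and your set need not be downward closed. (Also, $\xi\in c$ by itself does not mean $E^{\tree{T}}_\xi$ is applied along $c$.) The right candidate is the $\leq_{\tree{T}}$-downward closure of $A_b=\{\xi+1:\alpha_\xi+1\in b\}$.

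\textbf{What remains unproved.} With that definition, the substance of the proposition still has to be established:
\begin{enumerate}
\item[(a)] Elements of $A_b$ are pairwise $\leq_{\tree{T}}$-comparable. One uses that nodes of $\tree{X}_{\zeta+1}$ strictly between $\beta_\zeta$ and $\alpha_\zeta+1$ lie below no later node.
\item[(b)] $A_b$ is cofinal in $\lh(\tree{T})$.
\item[(c)] The branch of $X(\tree{S},\tree{T}^\frown c)$ at $\sup_\xi\alpha_\xi$ is exactly $b$.
\end{enumerate}

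Your justification of (b), that it ``follows from the cofinality established in Step~1'', is a non sequitur. Cofinality of the $\alpha_\xi$ in $\lh(\tree{X})$ says nothing about which nodes $\alpha_\xi+1$ a given $b$ passes through. A branch can cross stage $\alpha_\xi$ via a copied node whose predecessor is below $\beta_\xi$. Ruling out that this happens cofinally is exactly the existence half of the proposition, and Step~3 takes it for granted (``by construction this is exactly $b$'').

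Likewise, ``while $b$ goes through it'' in Step~4 requires that the branch induced by $c$ contains $\alpha_\xi+1$ for all \emph{sufficiently large} $\xi+1\in c$. The example shows this can fail for small $\xi$, so it needs an argument. One route is to track which nodes of the $\tree{X}_\xi$, $\xi\in c$, the $u$-maps send to the limit node at $\sup_\xi\alpha_\xi$.

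Since (a)--(c) are the whole content of the proposition, your proposal as written amounts to the citation plus an incorrect definition of $c$.
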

%Here you cite nitcis, but do we want steel-book instead? Also I think we should say where it appears...

Note that we do not assume that $c$ is a wellfounded branch of $\tree{T}$. If $M_c^\tree{T}$ is illfounded, then $X(\tree{S},\tree{T}^\frown c)$ is not an iteration tree (and may not even be a putative iteration tree). The conclusion asserts that $X(\tree{S},\tree{T}^\frown c)\restriction\sup_\xi\alpha_\xi+1$ is an iteration tree, however.

Next we consider normalizing a stack of length $\omega$. Let $\vec{\tree{S}}$ be a non-dropping length $\omega$ stack of normal trees on a mouse pair $(P,\Sigma)$.
Let $\tree{X}_{n}=X(\tree{S}_0,\ldots, \tree{S}_n)$. Then for $n< m$, $\tree{X}_m=X(\tree{X}_n, \vec{\tree{S}}\restriction (n, m])$ and so there is a resulting weak tree embedding $\Phi_{n,m}:\tree{X}_n\to \tree{X}_m$. We also set $\Phi_{n,n}$ to be the identity weak tree embedding from $\tree{X}_n$ to itself. One can show that if $l\leq m\leq n$, then $\Phi_{l, n}=\Phi_{m, n}\circ \Phi_{l, m}$. The full normalization $X(\vec{\tree{S}})$ is the direct limit of the linear system $\langle \tree{X}_n, \Phi_{n,m}\mid n\leq m<\omega\rangle$. As mentioned above, one of the main theorems of \cite{fullnormalization} and \cite{Farmer} gives that $X(\vec{\tree{S}})$ is a normal tree on $(P,\Sigma)$ with last model $M_b^{\vec{\tree{S}}}$ for $b$ the unique cofinal branch of $\vec{\tree{S}}$. Moreover, there are resulting direct limit weak tree embeddings $\Phi^{\vec{\tree{S}}}_{n,\omega}:\tree{X}_n\to X(\vec{\tree{S}})$.

We will only need to use a couple additional facts about this process. First, every node in the full normalization $X(\vec{\tree{S}})$ is in the range of the $u$-map of some direct limit weak tree embedding.

\begin{prp}\label{prop - infinite stack}
    Let $\vec{\tree{S}}$ be a non-dropping length $\omega$ stack of normal trees on a mouse pair $(P,\Sigma)$ with scope $\textnormal{HC}$. 
   For every $\alpha+1<\lh(X(\vec{\tree{S}}))$, there exist an $n<\omega$ and $\bar{\alpha}+1<\lh(\tree{X}_n)$ such that $u^{\Phi^{\vec{\tree{S}}}_{n,\omega}}(\bar{\alpha})=\alpha$. 
   %and $t^{\Phi_{n,\infty}}_{\bar{\alpha}}(E_{\bar{\alpha}}^{\tree{X}_n})=E_\alpha^{\tree{X}_\infty}$ 
\end{prp}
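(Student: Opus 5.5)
The proposition follows from how direct limits of weak tree embeddings are built, together with a countable-support argument. I would first recall the construction of the direct limit $\tree{X}_\omega = X(\vec{\tree{S}})$ of the system $\langle \tree{X}_n, \Phi_{n,m}\mid n\le m<\omega\rangle$ from \cite{fullnormalization}.

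The underlying index set of $\tree{X}_\omega$ is the direct limit of the linear orders $\lh(\tree{X}_n)$ under the $u$-maps $u^{\Phi_{n,m}}$. Concretely, nodes of $\tree{X}_\omega$ are equivalence classes $[n,\beta]$ with $\beta<\lh(\tree{X}_n)$, where $[n,\beta]\sim[m,\gamma]$ if $u^{\Phi_{n,k}}(\beta)=u^{\Phi_{m,k}}(\gamma)$ for some (equivalently, all larger) $k$. The map $u^{\Phi_{n,\omega}}$ is $\beta\mapsto[n,\beta]$.

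The substantive point is that this direct limit is \emph{exactly} $\tree{X}_\omega$: the transitive collapse of the limit order gives $\lh(\tree{X}_\omega)$, and no new nodes are added. If the paper's definition of direct limit of weak tree embeddings is taken literally from \cite{fullnormalization}, this is built in, and the proposition is immediate. Each $\alpha<\lh(\tree{X}_\omega)$ is some $[n,\bar\alpha]$, so $u^{\Phi_{n,\omega}}(\bar\alpha)=\alpha$. One must also check that $\alpha+1<\lh(\tree{X}_\omega)$ gives $\bar\alpha+1<\lh(\tree{X}_n)$. Since $u$-maps send nodes with exit extenders to nodes with exit extenders (they preserve non-last nodes), one may need to pass to a larger $n$, where $\alpha$ is represented at a non-last node.

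The main obstacle is justifying that the full normalization, identified with the direct limit, really has no nodes outside the union of the ranges of the $u^{\Phi_{n,\omega}}$. This is the content of the theorem that the direct limit is wellfounded and equals $X(\vec{\tree{S}})$. I would argue it as follows. Using scope $\textnormal{HC}$, everything is countable. The direct limit tree $\tree{X}^*$ built from classes $[n,\beta]$ is a normal tree by $\Sigma$: its models are direct limits of the models $M_\beta^{\tree{X}_n}$ under the $t$/$\pi$ maps, and very strong hull condensation shows that each $\tree{X}_n$ is a weak hull of $\tree{X}^*$, so $\tree{X}^*$ is by $\Sigma$. Its last model is $M_b^{\vec{\tree{S}}}$. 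By the uniqueness of full normalization, namely the characterization in \cite{fullnormalization} of $X(\vec{\tree{S}})$ as this limit, we get $\tree{X}^*=X(\vec{\tree{S}})$. Every node of $\tree{X}^*$ is by construction some $[n,\bar\alpha]$, which finishes the proof.
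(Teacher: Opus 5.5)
Your core argument is right and is what the paper relies on. The paper gives no separate proof: $X(\vec{\mathcal{S}})$ is defined as the direct limit of $\langle \mathcal{X}_n,\Phi_{n,m}\mid n\le m<\omega\rangle$ along the $u$-maps, so every node is a class $[n,\bar\alpha]=u^{\Phi_{n,\omega}}(\bar\alpha)$. Representatives of non-last nodes are themselves non-last, so your worry about passing to a larger $n$ is vacuous.

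Your last paragraph is unnecessary, and one step in it is backwards. Very strong hull condensation passes ``by $\Sigma$'' from a tree down to its weak hulls, not from the hulls $\mathcal{X}_n$ up to $\mathcal{X}^*$, so it cannot show that $\mathcal{X}^*$ is by $\Sigma$. That fact is the cited theorem of \cite{fullnormalization} and \cite{Farmer}. If you want the identification with $X(\vec{\mathcal{S}})$ via uniqueness of normal trees with a given last model, you should invoke that theorem rather than hull condensation.
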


Second, a version of the commutativity of the associated weak tree embeddings passes through limits.

\begin{prp}\label{prop - infinite stack commutativity}
    Let $\vec{\tree{S}}$ be a non-dropping length $\omega$ stack of normal trees on a mouse pair $(P,\Sigma)$ with scope $\textnormal{HC}$. 
    %For $n\leq m\leq l<\omega$, let $P_n=M_0^{\tree{S}_n}$, $\tree{X}^n_m=X(\tree{S}_n,\ldots, \tree{S}_m)$, and $\Phi^n_{m,l}$ be the associated weak tree embedding from $\tree{X}^n_m$ into $\tree{X}^m_l$. 
   % Then for every $n<\omega$, the direct limit $\tree{X}^n_{\infty}$ of $\langle \tree{X}^n_m, \Phi^n_{m,l}\mid n\leq m\leq l<\omega\rangle$ is a non-dropping normal tree on $(P_n,\Sigma_{P_n})$ and has last model $M_b^{\vec{\tree{S}}}$ for $b$ the unique cofinal branch of $\vec{\tree{S}}$. 
    For $m\leq n\leq \omega$, let $\tree{X}_{m,n}=X(\vec{\tree{S}}\restriction[m,n))$. Then $\tree{X}_{0,\omega}=X(\tree{X}_{0,n}, \tree{X}_{n,\omega})$ and $\Phi^{\vec{\tree{S}}}_{n,\omega}$ is the weak tree embedding from $\tree{X}_{0,n}$ into $X(\tree{X}_{0,n}, \tree{X}_{n,\omega})$ associated to this normalization.
\end{prp}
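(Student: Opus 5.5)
We need to show that $\tree{X}_{0,\omega}=X(\tree{X}_{0,n},\tree{X}_{n,\omega})$ and that the associated weak tree embedding is $\Phi^{\vec{\tree{S}}}_{n,\omega}$. The plan is to compare two direct limit systems, using that full normalization of finite stacks is associative. For $m\ge n$ we will use the identity
\[
\tree{X}_{0,m}=X(\tree{X}_{0,n},\tree{X}_{n,m}),
\]
together with the fact that $\Phi_{n,m}:\tree{X}_{0,n}\to\tree{X}_{0,m}$ is exactly the weak tree embedding associated to this length-two normalization. Both facts follow from the definition given above, where $\tree{X}_m=X(\tree{X}_n,\vec{\tree{S}}\restriction(n,m])$. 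We also need $X(\vec{\tree{S}}\restriction[n,m))$ to agree with the finite-stack recursion, which is the finite associativity of full normalization (Schlutzenberg, Steel). We will quote it.

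Next, write $\tree{U}_m=\tree{X}_{n,m}$ for $m\ge n$ and $\Psi_{m,l}:\tree{U}_m\to\tree{U}_l$ for the tail weak tree embeddings. Their direct limit is $\tree{U}_\omega=\tree{X}_{n,\omega}$. The key step is to run the length-two normalization $X(\tree{X}_{0,n},\tree{U}_\omega)$ in parallel with the system $\langle X(\tree{X}_{0,n},\tree{U}_m)\mid m\ge n\rangle=\langle \tree{X}_{0,m}\rangle$. We will show by induction along $\tree{U}_\omega$ that $X(\tree{X}_{0,n},\tree{U}_\omega)$ is the direct limit of the trees $X(\tree{X}_{0,n},\tree{U}_m)$ under the embeddings induced by $\Psi_{m,l}$. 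This is a continuity property of normalization in its second coordinate. At a node $\beta$ of $\tree{U}_\omega$ we first use \cref{prop - infinite stack} applied to the tail stack. This gives an $m$ and a $\bar\beta$ with $u^{\Psi_{m,\omega}}(\bar\beta)=\beta$, so every extender of $\tree{U}_\omega$ is the image of an extender of some $\tree{U}_m$. The normalization step inserting $E^{\tree{U}_\omega}_\beta$ is then the image under the direct limit maps of the step inserting $E^{\tree{U}_m}_{\bar\beta}$. This holds because normalization steps are defined by first-order operations (ultrapowers and copying) that commute with the $t$- and $u$-maps. At limit stages the branches agree by \cref{prop - branch uniqueness}, combined with the fact that everything is by $\Sigma$, which normalizes well. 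Since the induced embeddings on the trees $X(\tree{X}_{0,n},\tree{U}_m)=\tree{X}_{0,m}$ are the $\Phi_{m,l}$, the direct limit is $\tree{X}_{0,\omega}$, as the direct limit of a cofinal subsystem.

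Finally, for the second claim, the weak tree embedding from $\tree{X}_{0,n}$ into $X(\tree{X}_{0,n},\tree{U}_\omega)$ is assembled from the embeddings $\tree{X}_{0,n}\to X(\tree{X}_{0,n},\tree{U}_m)$, which are the $\Phi_{n,m}$, composed with the limit maps. By the commutativity $\Phi_{n,l}=\Phi_{m,l}\circ\Phi_{n,m}$ and the universal property of the direct limit, this embedding equals $\Phi^{\vec{\tree{S}}}_{n,\omega}$.

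The main obstacle will be the continuity step, namely that length-two normalization commutes with direct limits in the second coordinate. The delicate points are checking that $u$-maps and $t$-maps cohere at limit stages and that no drops arise, which uses the non-dropping hypothesis. I would first try to reduce this to the analogous statement for embedding normalization in \cite[Section 6.6]{steel-book}, using that full normalization is a hull condensation image of it.
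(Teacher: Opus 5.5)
The paper gives no proof of this proposition. It is quoted as part of the full normalization machinery of \cite{Farmer} and \cite{fullnormalization}, so your argument has to stand on its own, and as written it does not. The ``continuity of normalization in the second coordinate'' that you isolate is the whole content of the statement, and it is asserted rather than proved.

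\textbf{The tree equality is free.} The half you spend most effort on needs no continuity argument. Let $P_n=M_0^{\tree{S}_n}$. Both $\tree{X}_{0,\omega}$ and $X(\tree{X}_{0,n},\tree{X}_{n,\omega})$ are non-dropping normal trees by $\Sigma$ with last model $M_b^{\vec{\tree{S}}}$. For the second tree this holds because $\tree{X}_{n,\omega}$ is by the tail strategy $\Sigma_{P_n}$ and $\Sigma$ fully normalizes well. So the two trees are equal by \cref{rmk - uniqueness of normal trees}, and the same remark gives the tree part of your finite associativity.

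\textbf{The embedding identification is the real content.} A weak tree embedding is not determined by its source and target trees. For example, let $E$ be a measure on the sequence of $P$ and let $\tree{T}$ be the two-step linear tree using $E$ and then $i_E(E)$. Then the one-step tree $\langle E\rangle$ embeds into $\tree{T}$ both with $u(0)=0$ and with $u(0)=1$, and the two embeddings give different maps on last models.

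\textbf{Your argument for the embeddings presupposes the conclusion.} You assert that ``the induced embeddings on $X(\tree{X}_{0,n},\tree{U}_m)=\tree{X}_{0,m}$ are the $\Phi_{m,l}$''. You also assert that the normalization embedding into $X(\tree{X}_{0,n},\tree{U}_\omega)$ is ``assembled from the $\Phi_{n,m}$ composed with the limit maps''. Take $m=n$: then $\tree{U}_n$ is trivial and $\Phi_{n,n}$ is the identity, so the assembled embedding is just the limit map $\Phi^{\vec{\tree{S}}}_{n,\omega}$. That is exactly what had to be shown.

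\textbf{The functoriality you need is not available.} Nothing you quote gives an action of $X(\tree{X}_{0,n},-)$ on weak tree embeddings, and $\Psi_{m,l}$ is not onto. The extenders of $\tree{U}_l$ outside $\ran(u^{\Psi_{m,l}})$ come from $\tree{S}_m,\ldots,\tree{S}_{l-1}$. In $X(\tree{X}_{0,n},\tree{U}_l)$ they are interleaved with the copied material, which shifts positions, predecessors and copy maps downstream. So the phrase ``the step inserting $E^{\tree{U}_\omega}_\beta$ is the image of the step inserting $E^{\tree{U}_m}_{\bar\beta}$'' has no content until an induced embedding $X(\tree{X}_{0,n},\tree{U}_m)\to X(\tree{X}_{0,n},\tree{U}_l)$ is built. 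It must then be shown to commute with every one-step normalization and with direct limits along branches, and shown to equal $\Phi_{m,l}$. That is precisely the commutation of full normalization with direct limits of tree embeddings from the transfinite-stack theory. It does not follow from ultrapowers and copying being first-order.

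\textbf{The limit step does not help.} \cref{prop - branch uniqueness}, or the fact that everything is by $\Sigma$, can at best tell you that the two constructions choose the same branches. That only re-proves the tree equality you already get from \cref{rmk - uniqueness of normal trees}. It says nothing about the $v$- and $s$-maps of the embedding at limit nodes.

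To close the gap you must either cite the commutation theorem, as the paper implicitly does, or give a genuine argument pinning down the embedding. Uniqueness of the trees alone cannot do this.
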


Finally, we mention a couple consequences of full normalization which we will use. First, we note what is probably the most important consequence:  \textit{positionality}, another regularity property for iteration strategies.
\begin{lma}[{Positionality, Steel \cite[Section 5.2]{steel-book}}]\label{lemma - commutativity}
   Suppose $\vec{\tree{S}}_0$ and $\vec{\tree{S}}_1$ are non-dropping stacks of normal trees on a mouse pair $(P,\Sigma)$ with a common last model. Then $i^{\vec{\tree{S}}_0}_{0,\infty}=i^{\vec{\tree{S}}_1}_{0,\infty}$.
\end{lma}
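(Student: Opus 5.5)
Since the two stacks $\vec{\tree{S}}_0$ and $\vec{\tree{S}}_1$ have a common last model $Q$, the plan is to normalize them and then compare the resulting normal trees. First, replace each stack by its full normalization. By the main theorem of \cite{Farmer} and \cite{fullnormalization}, $\tree{X}_0=X(\vec{\tree{S}}_0)$ and $\tree{X}_1=X(\vec{\tree{S}}_1)$ are normal trees by $\Sigma$ (using that $\Sigma$ fully normalizes well). Each has the same last model $Q$ and the same main branch embedding as the stack it came from; since the stacks are non-dropping, so are $\tree{X}_0$ and $\tree{X}_1$. So it suffices to show that $i^{\tree{X}_0}_{0,\infty}=i^{\tree{X}_1}_{0,\infty}$ for two non-dropping normal trees by $\Sigma$ with common last model $Q$.

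Next, compare $\tree{X}_0$ and $\tree{X}_1$ extender by extender. Both are normal trees by the same strategy $\Sigma$, so I claim by induction on $\alpha$ that they agree up to the first $\alpha$ where $E^{\tree{X}_0}_\alpha\neq E^{\tree{X}_1}_\alpha$.
\begin{itemize}
\item At limit stages the branches are chosen by $\Sigma$, so they agree.
\item At successor stages the tree order is determined by normality from the extenders already used.
\end{itemize}
Now suppose the trees diverge at some $\alpha$. Then $M^{\tree{X}_0}_\alpha=M^{\tree{X}_1}_\alpha$, and the two exit extenders are distinct extenders on the sequence of this common model. Say $\lh(E^{\tree{X}_0}_\alpha)<\lh(E^{\tree{X}_1}_\alpha)$. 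In a normal tree the model $M^{\tree{X}_0}_\alpha|\lh(E^{\tree{X}_0}_\alpha)$ is an initial segment of every later model, in particular of $Q$. Also $E^{\tree{X}_0}_\alpha$ is not on the sequence of $Q$, since it was used and $\lh(E_\alpha)$ is a cardinal in later models. On the other hand, $\tree{X}_1$ continues past $\alpha$ using $E^{\tree{X}_1}_\alpha$. Hence $Q$ agrees with $M^{\tree{X}_1}_\alpha$ below $\lh(E^{\tree{X}_1}_\alpha)$, and that model has $E^{\tree{X}_0}_\alpha$ on its sequence. This is a contradiction. So neither tree diverges from the other, and one is an initial segment of the other. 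Because both have last model $Q$, and a proper extension of a non-dropping normal tree moves its last model, the two trees are equal, and so are their main branch embeddings.

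The main obstacle I expect is making the first step fully rigorous: that full normalization preserves the last model and the main branch embedding, so that $i^{\vec{\tree{S}}}_{0,\infty}=i^{X(\vec{\tree{S}})}_{0,\infty}$ for stacks of arbitrary length. For length two this is the commutativity $i^\tree{T}_{0,\infty}\circ i^\tree{S}_{0,\infty}=i^\tree{X}_{0,\infty}$ stated above. For longer stacks I would proceed by induction on the length. At successor steps use the length-two case. At limit steps use the direct limit weak tree embeddings together with \cref{prop - infinite stack} and \cref{prop - infinite stack commutativity}, to check that the main branch embeddings cohere with the direct limit. A secondary subtlety is the comparison step when the trees have different lengths; there I would lean on the fact that exit extender lengths strictly increase in normal trees.
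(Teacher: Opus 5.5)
Your proposal is correct and is essentially the paper's argument. You fully normalize both stacks, which the paper handles by citing \cite{Farmer} and \cite{fullnormalization} for the fact that $X(\vec{\mathcal{S}})$ has the same last model and main branch embedding as $\vec{\mathcal{S}}$, so the induction on stack length you anticipate is not needed there. You then show that two non-dropping normal trees by $\Sigma$ with last model $Q$ coincide; your first-disagreement argument just unpacks the paper's remark that any such tree is the tree on $(P,\Sigma)$ obtained by comparing $P$ with $Q$ by least extender disagreement.
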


This is actually an immediate consequence of full normalization and an essentially trivial instance of positionality: non-dropping normal trees $\tree{S}_0$, $\tree{S}_1$ on a mouse pair $(P,\Sigma)$ with a common last model $Q$ have the same iteration map because $\tree{S}_0=\tree{S}_1$, as both are just the result the normal tree on $(P,\Sigma)$ obtained by comparing $P$ and $Q$.

\begin{rmk}\label{rmk - uniqueness of normal trees} The fact just mentioned is totally general and will be used often: a normal tree $\tree{T}$ of successor length on a mouse pair $(P,\Sigma)$ is completely determined by its last model and, in fact, $\tree{T}$ is the tree on $(P,\Sigma)$ obtained by comparing $P$ and $M_\infty^\tree{T}$ by least extender disagreements. (In particular, we never encounter strategy disagreements and $M_\infty^\tree{T}$ doesn't move in this comparison.)
\end{rmk}

Second, we have a related directedness result for non-dropping iterates of a mouse pair.

\begin{lma}\label{lemma - directedness}
    Let $\delta$ be a regular cardinal. Suppose \((M, \Sigma_M)\) and \( (N, \Sigma_N)\)
    are non-dropping ${<}\delta$-iterates of a mouse pair \( (P, \Sigma)\) with scope $H_\delta$. Let $\mathcal{T}_M$ and $\mathcal{T}_N$ be the padded normal iteration trees on $(M, \Sigma_M)$ and $(N, \Sigma_N)$ obtained by comparison by least extender disagreement. Then the length $\eta$ of these trees is less than $\delta$, no strategy disagreements occur, and neither side drops along the main branch, so $M_\infty^{\mathcal{T}_M}=M_\infty^{\mathcal{T}_N}$ and $\Sigma_{M_\infty^{\mathcal{T}_M}}=\Sigma_{M_\infty^{\mathcal{T}_N}}$. Moreover, for all $\alpha+1<\eta$, either $E_\alpha^{\mathcal{T}_M}$ or $E_\alpha^{\mathcal{T}_N}$ is trivial.
\end{lma}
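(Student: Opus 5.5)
We will prove \cref{lemma - directedness} by running the comparison of $(M,\Sigma_M)$ with $(N,\Sigma_N)$ by least extender disagreement, and using full normalization to reduce it to the comparison of two normal iterates of $(P,\Sigma)$. Let $\tree{S}_M$ and $\tree{S}_N$ be stacks of normal trees on $(P,\Sigma)$ of size ${<}\delta$ with last models $M$ and $N$, respectively. By full normalization, both may be replaced by the single normal trees $X(\tree{S}_M)$ and $X(\tree{S}_N)$ on $(P,\Sigma)$. These have length $<\delta$ and are non-dropping, with the tail strategies agreeing with $\Sigma_M$ and $\Sigma_N$. By \cref{rmk - uniqueness of normal trees}, these trees are determined by their last models.

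\textbf{Comparing the iterates.} Form the comparison trees $\tree{T}_M$ on $(M,\Sigma_M)$ and $\tree{T}_N$ on $(N,\Sigma_N)$. These are by $\Sigma_M$ and $\Sigma_N$ as long as they stay in scope.

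\emph{No strategy disagreements.} For each stage $\xi$, the stacks $\langle X(\tree{S}_M),\tree{T}_M\restriction\xi+1\rangle$ and $\langle X(\tree{S}_N),\tree{T}_N\restriction\xi+1\rangle$ are stacks on $(P,\Sigma)$. Their normalizations are normal trees on $(P,\Sigma)$. If the current models agreed up to a point but the strategies disagreed, the two normalized trees would both be by $\Sigma$. They would then agree as far as their common extenders go, and the strategy tails would be determined by pullback, since push-forward consistency and very strong hull condensation make the tail strategies the pullbacks of $\Sigma$. Hence no strategy disagreement can arise.

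\emph{The comparison ends before $\delta$.} Use the usual regularity argument. If the comparison reached length $\delta$, a reflection or hull argument in $H_\delta$, with $\delta$ regular and everything else of size ${<}\delta$, produces the standard contradiction: an extender that is used on both sides with compatible generators.

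\emph{Neither side drops, and the last models agree.} The standard comparison argument gives that one last model is an initial segment of the other. Since both $M$ and $N$ are non-dropping iterates of $P$, we normalize the two full stacks to normal trees on $(P,\Sigma)$. Then \cref{lemma - drops} would force cofinal new extenders if either side dropped. Together with the Dodd–Jensen-type minimality that follows from positionality (\cref{lemma - commutativity}) applied to the two non-dropping trees on $P$, this rules out drops. So the last models are equal, and their strategies coincide as tails of $\Sigma$.

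\textbf{The ``moreover'' clause.} At each stage $\alpha$ the least disagreement is an extender on exactly one side's sequence; the other side's extender is absent, and we pad. Hence one of $E_\alpha^{\tree{T}_M}$, $E_\alpha^{\tree{T}_N}$ is trivial.

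\textbf{Main obstacle.} The hardest step is showing that there are no strategy disagreements and no drops. The plan for both is to normalize everything back to single normal trees on $(P,\Sigma)$ and appeal to the uniqueness in \cref{rmk - uniqueness of normal trees}.
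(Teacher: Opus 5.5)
The paper states this lemma without proof, as a consequence of full normalization from the literature, so I am comparing your sketch with the standard argument. The sketch has a genuine gap in the ``moreover'' clause, and a related one in the no-drop step.

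\emph{The ``moreover'' clause.} Your justification (``the least disagreement is an extender on exactly one side's sequence; the other side's extender is absent'') asserts what has to be proved. In an arbitrary comparison by least extender disagreement, both current models can carry distinct extenders at the disagreement index, and then both sides use them. What rules this out here is that the two comparison trees normalize to the \emph{same} normal tree on $(P,\Sigma)$.
\begin{enumerate}
\item Let $Q_M=M^{\mathcal{T}_M}_\infty$ and $Q_N=M^{\mathcal{T}_N}_\infty$, say with $Q_M$ an initial segment of $Q_N$. Let $\tree{X}_M=X(\tree{S}_M,\mathcal{T}_M)$ and $\tree{X}_N=X(\tree{S}_N,\mathcal{T}_N)$.
\item By \cref{rmk - uniqueness of normal trees}, these are the comparisons of $P$ against $Q_M$ and against $Q_N$. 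The latter runs exactly like the former until the $P$-side reaches $Q_M$, and it must stop there. Hence $Q_M=Q_N$ and $\tree{X}_M=\tree{X}_N=:\tree{X}$.
\item Every exit extender of $\mathcal{T}_M$ and of $\mathcal{T}_N$ is an exit extender of $\tree{X}$, and the exit extenders of a normal tree have strictly increasing lengths.
\item If $E^{\mathcal{T}_M}_\alpha$ and $E^{\mathcal{T}_N}_\alpha$ were both non-trivial, they would have the same length (the disagreement index at stage $\alpha$). So they would be the same exit extender of $\tree{X}$, hence equal, contradicting that they form a disagreement.
\end{enumerate}

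\emph{The no-drop step.} Your paragraph here contains no actual argument. You never say what contradiction the cofinality in \cref{lemma - drops} produces. Also, \cref{lemma - commutativity} only compares \emph{non-dropping} stacks with a common last model, so no Dodd--Jensen property follows from it.

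You also have the order reversed: $Q_M=Q_N$ comes for free from uniqueness, as above, and is then the tool for handling drops. Since $\tree{S}_M$ and $\tree{S}_N$ do not drop, and normalization agrees with a non-dropping stack's main-branch embedding while the normalization of a dropping stack drops, $\tree{X}_M=\tree{X}_N$ gives that $\mathcal{T}_M$ drops iff $\mathcal{T}_N$ does.

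The case where both drop still has to be refuted. When $\tree{X}$ has a last exit extender, \cref{lemma - drops} applied to both stacks makes it the last exit extender of both $\mathcal{T}_M$ and $\mathcal{T}_N$, contradicting the ``moreover'' clause. In general, one uses the classical argument: the extenders applied at the last drops on the two main branches are compatible, because both sides produce the same unsound $Q$ from its core.

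\emph{Strategy agreement.} Here the operative principle is positionality via normalizing well, not ``pullbacks''. The two stage-$\xi$ normalizations agree up to the first exit extender of length at least the disagreement index. Trees on the common initial segment normalize into that common part, so both tail strategies are computed from the same $\Sigma$-tree.
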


\section{Main theorems}

\subsection{$\AD_\mathbb{R}+V=L(P(\mathbb{R}))+\textnormal{HPC}$}\label{section - AD_R}

In this section we'll start by proving the our main theorem under $\AD_\mathbb{R}+V=L(P(\mathbb{R}))+\textnormal{HPC}$. We will also be able to generalize it, using a somewhat more complicated argument, to ultrafilters that are not on ordinals.

\begin{lma}[{\(\text{AD}^+ + V= L(P(\mathbb R))\)}]\label{lemma - generators}
    If \(U\) is an ultrafilter on \(X\leq^*\mathbb R\),
 then \(i_U\restriction \HOD = j_E^{\HOD}\)
    where \(E\) is an extender in \(\HOD\) of support
    \(\Theta\).
\end{lma}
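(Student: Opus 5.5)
We want to show that $i_U\restriction\HOD$ is the ultrapower of $\HOD$ by the extender $E$ it derives with support $\Theta$. The plan is to define $E$ directly from $j=i_U\restriction\HOD$, check that $E\in\HOD$, and then prove that the factor map $k:\Ult(\HOD,E)\to j(\HOD)$ is the identity. It suffices to show that every element of $j(\HOD)$ has the form $j(f)(a)$ with $a\in[\Theta]^{<\omega}$ and $f\in\HOD$.

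\textbf{Defining $E$ and placing it in $\HOD$.} Since $X\leq^*\mathbb R$, we have $|X|<\Theta$ in the surjection sense. Hence $j(\Theta)=\Theta$ and $j$ maps $\Theta$ into $\Theta$; this is standard under $\AD^+$, since $\Theta$ is not the surjective image of $\mathbb R\times\alpha$ for $\alpha<\Theta$. Let $E$ be the $(\Theta,j(\Theta))$-extender derived from $j$: for $a\in[\Theta]^{<\omega}$,
\[E_a=\{A\subseteq[\kappa]^{|a|}: A\in\HOD,\ a\in j(A)\},\]
with $\kappa$ large enough below $\Theta$ for the given $a$. Each $E_a$ is an ultrafilter on an ordinal, concentrating on $\HOD$-subsets. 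By Kunen's theorem the map $a\mapsto E_a$ is ordinal definable. More precisely, $j\restriction\HOD$ is definable from $U$, and the collection of ultrafilters of this kind is OD by the usual Kunen argument, which uses the fact that under AD there are only $<\Theta$ many measures on each ordinal below $\Theta$ together with an OD wellorder of them. Thus $E\in\HOD$, and the support is $\Theta$ because $j(\Theta)=\Theta$.

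\textbf{Generation by $\Theta$.} This is the key step. Each element of $j(\HOD)$ lies in $j(L[H])=L[j(H)]$, where $H=\HOD|\Theta$ as in \cref{thm - steel hod analysis}. So each such element is definable in $L[j(H)]$ from $j(H)$ and ordinals. An ordinal $\gamma$ can be written as $j(g)(\beta)$ with $\beta<\Theta$ once we show that ordinals of $j(\HOD)$ above $\Theta$ are generated. The plan is to use that $\Theta$ is regular in $\HOD$ and that $\HOD=L[H]$ with $H\subseteq V_\Theta$. Every ordinal $\gamma$ is definable in $L[H]$ from $H$ and ordinals, and the ordinals below $\Theta$ code the relevant parameters. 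Concretely, $i_U$ is generated by $[\mathrm{id}]_U$, which lives in $j(X)$. Choose an OD prewellordering, or more precisely a surjection $\pi:\mathbb R\to X$, and represent $[\mathrm{id}]_U$ via OD sets of reals. Then for any $x=[f]_U\in j(\HOD)$ with $f:X\to\HOD$, the set $\{(p,y):f(p)=y\}$ can be coded in $\HOD$ by ordinals below $\Theta$. To do this we use that the Wadge rank, or OD-code, of each relevant set of reals is below $\Theta$, so the dependence of $[f]_U$ on $[\mathrm{id}]_U$ factors through an ordinal $\beta<\Theta$. Then $x=j(F)(j(\beta)\text{-something})$; more precisely, $x=j(F)(\alpha)$, where $\alpha$ is the image under $j$ of the OD-collapse of the type of $[\mathrm{id}]_U$, computed inside the $\HOD$ of $j$'s target.

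\textbf{Main obstacle.} The main difficulty I expect is this last step: showing that the seed $[\mathrm{id}]_U$ is, relative to $\HOD$, captured by an ordinal below $\Theta$. My first attempt would use Vopěnka-style coding. The set of reals $\{x: \pi(x)\in A\}$ for $A\in U$, together with $U$ itself, can be encoded by ordinals via Wadge ranks, all below $\Theta$. Then $\HOD$ together with the real $\pi^{-1}$ of the seed is generic over $\HOD$ for a Vopěnka algebra of size $\Theta$, which is OD and which $j$ fixes pointwise on the relevant codes. So every element of $j(\HOD)$ is $j(F)(\alpha)$ for $\alpha<\Theta$ naming the appropriate condition. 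Once generation is established, $k$ is surjective, hence the identity, and $i_U\restriction\HOD=j_E^{\HOD}$.
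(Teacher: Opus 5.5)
Your overall frame matches the paper's: derive $E$ from $j=i_U\restriction\HOD$ with support $\Theta$, and show that every element of $j(\HOD)$ has the form $j(g)(a)$ with $g\in\HOD$ and $a\in[\Theta]^{<\omega}$. It suffices to do this for ordinals, since an element $[f]_U$ equals $j(w)([w^{-1}\circ f]_U)$ for the canonical OD enumeration $w$ of $\HOD$. But the generation step, which you rightly flag as the heart of the matter, is not proved, and your sketch cannot be made to work.

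\begin{itemize}
\item A function $f:X\to\HOD$ representing an element of $j(\HOD)$ is typically not in $\HOD$, and neither are $X$ or $\pi$. So $f$ cannot be ``coded in $\HOD$ by ordinals below $\Theta$.''
\item There is no ``real $\pi^{-1}$ of the seed.'' $U$ is countably complete, so $i_U$ fixes every real and $i_U(\mathbb R)=\mathbb R$. The ultrapower computes $[c_\pi]_U$ applied to $r=[c_r]_U$ as $[c_{\pi(r)}]_U=i_U(\pi(r))\in i_U[X]$, which is never $[\mathrm{id}]_U$ for nonprincipal $U$.
\item This is a symptom of \L{}o\'s's theorem failing for $\Ult(V,U)$ without choice. $i_U$ is elementary only on structures such as $\HOD$ or $\HOD_S$, so you cannot apply it to a non-OD $\pi$ or argue about genericity of the seed.
\item The Vop\v{e}nka conditions are indexed by ordinals below $\Theta$, and $j$ moves these, since $\mathrm{crit}(j)<\Theta$.
\item Separately, $j(\Theta)=\Theta$ is false in general. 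The paper notes that $i_U(\Theta)$ can exceed $\sup i_U[\Theta]$ when $\Theta$ is singular. What you actually need is only $j[\Theta]\subseteq\Theta$.
\end{itemize}

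The missing idea is a covering lemma: every set of ordinals that is a surjective image of $\mathbb R$ is contained in an OD set of ordinals that is still a surjective image of $\mathbb R$. The paper proves this using $V=L(P(\mathbb R))$.
\begin{itemize}
\item Write $S=\{\alpha\mid\varphi(\alpha,A)\}$ with $A\subseteq\mathbb R$.
\item Let $T$ be the union of the sets $S_B=\{\alpha\mid\varphi(\alpha,B)\}$ over all $B$ of the same Wadge rank as $A$ with $\mathrm{ot}(S_B)=\mathrm{ot}(S)$.
\item $T$ is OD. The sets of reals of a fixed Wadge rank form a surjective image of $\mathbb R$, so $T$ is a surjective image of $\mathbb R\times\mathrm{ot}(S)$, hence $\mathrm{ot}(T)<\Theta$.
\end{itemize}
Now take an ordinal $\nu=[f]_U$ and apply this to $S=\ran(f)$. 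Since $f$ takes values in $T$, $\nu\in i_U(T)$ directly from the definition of the ultrapower. Let $e:\mathrm{ot}(T)\to T$ be the OD increasing enumeration. By elementarity of $j$ on $\HOD$, $\nu=j(e)(\beta)$ for some $\beta<j(\mathrm{ot}(T))<\Theta$. That is exactly generation by ordinals below $\Theta$. Your remark about Wadge ranks points in this direction, but covering $\ran(f)$ by an OD set of size $<\Theta$ is the step that has to be stated and proved.
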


\begin{proof}

    We first claim that every set of ordinals $S$ of size less than $\Theta$ is covered by a set $T\in \HOD$ of size less than $\Theta$. Since $V=L(P(\mathbb{R}))$, $S$ is definable from some set of reals $A$ by a formula $\varphi$; say $S=\{\alpha\in \HOD\mid \varphi(\alpha, A)\}$. Let $S_B=\{\alpha\in \HOD\mid \varphi(\alpha, B)\}$. Let $\xi$ be the Wadge-rank of $A$. Let $Z$ be the set of all sets of reals $B$ of Wadge-rank $\xi$ such $S_B$ has the same ordertype as $S$. Let $T=\{\alpha\in \HOD\mid \exists B\in Z\,\varphi(\alpha, B)\}$. It's easy to show that $T$ has size less than $\Theta$.

To prove the lemma, it suffices to show that for every ordinal $\nu$ there is a set $T\in \HOD$ of size less than $\Theta$ such that $\nu\in i_U(T)$, since then the measures derived from $i_U\restriction \HOD$ concentrate on sets of size less than $\Theta$. Fix $f:X\to \Ord$ such that $[f]_U=\nu$. Let $S=\ran(f)$ and let $T\in \HOD$ be a set of size less than $\Theta$ covering $S$. For all $x$, $f(x)\in S$, so by the definition of the ultrapower, $\nu\in i_U(S)$. By elementarity of $i_U\restriction \HOD_S$, $\nu\in i_U(T)$, proving the lemma.
    \end{proof}

Note that in the case that $U$ is an ultrafilter on an ordinal, the extender $E$ derived from $i_U\restriction \HOD$ with support $\Theta$ also has length $\Theta$, since $\sup i_U[\Theta]=\Theta$ in this case (since $\Theta$ is a strong limit).

\begin{thm}[Goldberg \cite{uniqueness}]\label{thm - uniqueness}
If $j_0$ and $j_1$ are elementary embeddings from $V$ into the same inner model, then $j_0\restriction \Ord=j_1\restriction \Ord$.
\end{thm}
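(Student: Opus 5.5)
The plan is to show $j_0(\alpha)=j_1(\alpha)$ by induction on the ordinal $\alpha$, arguing in ZF only. Choice fails in our determinacy setting, so no ultrapower or choice-based representation of $j_0$ or $j_1$ is available. Suppose the claim fails and let $\alpha$ be least with $j_0(\alpha)\neq j_1(\alpha)$; by symmetry assume $j_0(\alpha)<j_1(\alpha)$. By minimality, $j_0\restriction\alpha=j_1\restriction\alpha$. Call this common map $k$ and put $\sigma=\sup k[\alpha]$. Then both $j_0(\alpha)$ and $j_1(\alpha)$ are ordinals of $M$ that are $\ge\sigma$.

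First I record that $j_0(\alpha)\notin\ran(j_1)$. If $j_0(\alpha)=j_1(\beta)$, then $\beta<\alpha$ because $j_1$ is order preserving and $j_1(\beta)<j_1(\alpha)$. Minimality then gives $j_1(\beta)=j_0(\beta)<j_0(\alpha)$, a contradiction. So the problem is entirely about $M$ and the common restriction $k$: I must show that the value $j(\alpha)$ of any elementary $j:V\to M$ extending $k$ is determined by $M$ and $k$. The natural approach is to find a characterization of the form ``$j(\alpha)$ is the least $\delta\ge\sigma$ such that $M\models\psi(\delta,p)$,'' where the parameter $p$ lies in $\ran(k)$, or more generally is computed identically by $j_0$ and $j_1$.

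To obtain such a characterization, I would code $\alpha$ in $V$ by a definable invariant. For example, take $\alpha$ to be the least ordinal of its ``type'' relative to the ordinals below it: $\alpha$ satisfies some formula $\psi(\alpha,\vec\beta)$ with $\vec\beta<\alpha$ that no smaller ordinal satisfies. Applying $j_0$ and $j_1$ then shows that $j_0(\alpha)$ and $j_1(\alpha)$ are both the least ordinal of $M$ satisfying $\psi(-,k(\vec\beta))$. This gives equality whenever such $\psi$ and $\vec\beta$ exist. In general they need not exist, since $\alpha$ need not be definable from smaller ordinals. The remedy I would try is to work with ordinal-definability in $V$ from the class parameter given by the $V$-ordinals themselves, and to exploit that $\Ord^M=\Ord$.

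Concretely, I would compare the ordertypes: show that $j_0(\alpha)$ and $j_1(\alpha)$ both equal the ordertype of a class that is defined in $V$ from $M$ and $k$ alone. One candidate is $\{\gamma<j(\alpha): \gamma$ is $M$-definable from parameters in $\ran(k)\cup\sigma\}$, shown to be unbounded and to determine $j(\alpha)$. The main obstacle is exactly this step: producing an intrinsic, choice-free description of $j(\alpha)$ from $M$ and $k$. The induction and the observation $j_0(\alpha)\notin\ran(j_1)$ are routine. I expect the real work is to rule out a ``gap,'' namely that $j_1(\alpha)$ lies strictly above $j_0(\alpha)$ while $M$ sees no difference between the two over $\ran(j_1)$. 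I would attack this by iterating: apply $j_1$ to the statement ``$j_0(\alpha)$ is definable in $M$ from $p$'' via elementarity of $j_0$, and derive a descending sequence of ordinals or a violation of minimality of $\alpha$.
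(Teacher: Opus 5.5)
There is a genuine gap. The reduction to a least $\alpha$ with $j_0(\alpha)<j_1(\alpha)$ is correct, and so is the observation $j_0(\alpha)\notin\mathrm{ran}(j_1)$. But the decisive step is never carried out: you need to show that $M$ and the common restriction $k=j_0\restriction\alpha=j_1\restriction\alpha$ determine $j(\alpha)$, and you leave this as ``the main obstacle.'' The remedies you sketch do not supply it:
\begin{itemize}
\item Ordinal-definability from ordinals is vacuous, since every ordinal defines itself.
\item The set $\{\gamma<j(\alpha):\gamma$ is $M$-definable from $\mathrm{ran}(k)\cup\sigma\}$ is defined by reference to the unknown $j(\alpha)$. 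For $j_0$ and $j_1$ it is just two initial segments of one and the same class, so by itself it cannot separate $j_0(\alpha)$ from $j_1(\alpha)$.
\item The ``descending sequence'' step is not specified. Applying $j_1$ to a statement about $M$ and $j_0$ is not even meaningful without further hypotheses on $M$ and $j_0$.
\end{itemize}

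More fundamentally, your decision to argue in ZF alone dooms the plan. Take $M=V$ and $j_0=\mathrm{id}$. The statement then says there is no nontrivial elementary $j:V\to V$, so it contains Kunen's inconsistency theorem. Whether ZF (in the language with a predicate for $j$) refutes such a $j$, i.e.\ whether Reinhardt cardinals are inconsistent with ZF, is a well-known open problem.

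Concretely, suppose $j:V\to V$ has critical point $\kappa$. In your notation $\alpha=\kappa$, $k=\mathrm{id}\restriction\kappa$ and $\sigma=\kappa$. Both $\mathrm{id}$ and $j$ are elementary embeddings $V\to V$ extending $k$, yet they take different values at $\kappa$. This is exactly the ``gap'' you hope to exclude using $M$ and $k$ alone. So no choice-free characterization of $j(\alpha)$ from $M$ and $k$ is available by known means. The missing ingredient is an essential use of the Axiom of Choice: Goldberg's result is a ZFC theorem that strengthens Kunen's.

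Your reason for avoiding choice also misreads how the result is used. The paper does not prove this theorem; it cites Goldberg. It explicitly construes the theorem as a theorem of ZFC in the language expanded by predicates for $j_0,j_1$. It applies it only to structures $(H_\alpha,\in,j,i_U\restriction H_\alpha)$, where:
\begin{itemize}
\item $H_\alpha$ is $\mathrm{HOD}\cap V_{o(H_\alpha)}$,
\item $o(H_\alpha)$ is inaccessible in $\mathrm{HOD}$, and
\item $j$ and $i_U\restriction H_\alpha$ both belong to $\mathrm{HOD}$.
\end{itemize}
These structures satisfy ZFC, so the failure of choice in the ambient determinacy model is irrelevant.
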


Note that the statement of this theorem is not actually expressible in the language of set theory. For our purposes, the result should be construed as a theorem of $\ZFC$ in the language of set theory expanded by additional predicates for $j_0$ and $j_1$ (i.e. where replacement is stated in this expanded language). We will apply this theorem to structures of the form $(V_\kappa, \in, j_0, j_1)$ for $\kappa$ a strongly inaccesssible cardinal and $j_0$ and $j_1$ elementary embeddings from $V_\kappa$ into some transitive set $M\subseteq V_\kappa$. Such structures are easily seen to satisfy $\ZFC$ in this expanded language. %explain background theory 

\begin{lma}\label{lemma - iterate criterion}
    Suppose \((M, \Sigma_M)\) and \( (N, \Sigma_N)\)
    are non-dropping iterates of a mouse pair \( (P, \Sigma)\) such that $P$ satisfies $\ZFC$. If \( N\subseteq  M\), then \( (N, \Sigma_N)\) is a normal non-dropping iterate of \( (M, \Sigma_M)\).
\end{lma}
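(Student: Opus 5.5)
The plan is to compare $(M,\Sigma_M)$ and $(N,\Sigma_N)$ and show that the $N$-side of the comparison is trivial. By \cref{lemma - directedness}, both are non-dropping iterates of $(P,\Sigma)$, so comparison by least extender disagreement gives padded normal trees $\tree{T}_M$ on $(M,\Sigma_M)$ and $\tree{T}_N$ on $(N,\Sigma_N)$ with the following properties: there are no strategy disagreements, neither side drops along its main branch, both have a common last model $Q$ with common tail strategy, and at each stage only one side uses a nontrivial extender. If we show that $\tree{T}_N$ uses no extenders, then $Q=N$. In that case $\tree{T}_M$, with padding removed, is a normal non-dropping tree on $(M,\Sigma_M)$ with last model $N$ and tail strategy $\Sigma_N$, which is exactly the conclusion. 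By \cref{rmk - uniqueness of normal trees}, this tree is then just the comparison of $M$ with $N$.

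So suppose toward a contradiction that some stage $\alpha$ has $E=E_\alpha^{\tree{T}_N}$ nontrivial. Take $\alpha$ least, and let $\kappa=\crit(E)$. Since all earlier $N$-side extenders are trivial, $M_\alpha^{\tree{T}_N}=N$, so $E$ is on the sequence of $N$. By the least disagreement, $M_\alpha^{\tree{T}_M}$ and $N$ agree strictly below $\lh(E)$, and $E$ is not on the $M$-side sequence. Because the main branch of $\tree{T}_N$ does not drop and the trees are normal, $E$ is total on $N$. Moreover, $i^{\tree{T}_N}_{0,\infty}$ has critical point $\kappa$ (the first extender used on the main branch has critical point at least $\kappa$; I would check that it is exactly $\kappa$, or else run the argument with the least critical point actually used on the branch). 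Positionality (\cref{lemma - commutativity}) gives
\[
i^{\tree{T}_M}_{0,\infty}\circ i_{P,M}=i^{\tree{T}_N}_{0,\infty}\circ i_{P,N}.
\]

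The main step is to get the contradiction, and here I would use \cref{thm - uniqueness}, which is the reason it was stated just before this lemma. Choose an inaccessible $\theta$ of $N$ (and of $M$) that is large and fixed by all the maps; this is possible since $P\models\ZFC$ and all the trees are set-sized. Then work with $N|\theta$. We have $j_0=i^{\tree{T}_N}_{0,\infty}\restriction N|\theta:N|\theta\to Q|\theta$. We want a second embedding $j_1$ from $N|\theta$ into the same target, with $j_1\restriction\Ord$ fixing $\kappa$. The candidate for $j_1$ is $i^{\tree{T}_M}_{0,\infty}\restriction N|\theta$, using $N\subseteq M$. The hard point is to verify that this restriction maps $N|\theta$ elementarily into $Q|\theta$. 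I would try to get this as follows. Use $N\subseteq M$ together with the fact that $N$ is a non-dropping iterate of $P$ with the common strategy. This should make $N|\theta$ recognizable inside $M$ as the $\Sigma_M$-iterate of $P$ via the appropriate tree, and $i^{\tree{T}_M}$ then sends it to the corresponding iterate inside $Q$, which is $Q$ itself. If this identification goes through, Goldberg's theorem forces $j_0\restriction\Ord=j_1\restriction\Ord$.

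To finish, we compare critical points on the two sides. The $M$-side extenders used after stage $\alpha$ have critical points at least $\lh(E)>\kappa$ by normality. The $M$-side extenders used before stage $\alpha$ have lengths below $\lh(E)$, and they cannot have critical point moving $\kappa$ compatibly with $j_0$: a generator-level agreement argument, comparing $E$ with $Q$ below $\lh(E)$, shows they cannot produce $j_0(\kappa)$. So $j_1(\kappa)\neq j_0(\kappa)$, which is the desired contradiction. If making $j_1$ elementary turns out to be problematic, my fallback is the classical fine-structural route. Since $N\subseteq M$, we have $P(\kappa)\cap N\subseteq P(\kappa)\cap M_\alpha^{\tree{T}_M}$, because earlier $M$-side critical points lie above $\kappa$ or do not add subsets of $\kappa$. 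Then the extender $E$, whose measures are computed on the common part below $\lh(E)$, would have to be on the $M$-side sequence by coherence and the initial segment condition, contradicting the disagreement.
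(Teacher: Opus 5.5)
Your outer frame matches the paper's. You compare $M$ with $N$, use \cref{lemma - directedness} to get non-dropping padded trees with a common last model $Q$, and look for a contradiction from a least nontrivial extender $E$ on the $N$-side. But the step meant to produce that contradiction fails.

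Since $N|\theta\in N\subseteq M$, the map $j_1=i^{\mathcal{T}_M}_{0,\infty}\restriction N|\theta$ is elementary into $i^{\mathcal{T}_M}_{0,\infty}(N|\theta)$. Nothing you give identifies that structure with $Q|\theta$. Your heuristic (``$N|\theta$ is the iterate inside $M$, so its image is $Q$'') uses only $N\subseteq M$ and that $N$ is an iterate of $P$. It gives the wrong answer in exactly the case the lemma asserts. If $N$ is the last model of a nontrivial normal tree $\mathcal{W}$ on $M$, then $Q=N$. But $i^{\mathcal{W}}(N|\theta)$ is the corresponding initial segment of the last model of $i^{\mathcal{W}}(\mathcal{W})$, a proper iterate of $N$, and it differs from $N|\theta$. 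So you have no second embedding $N|\theta\to Q|\theta$ to feed into \cref{thm - uniqueness}; the paper uses that theorem only later, to identify the iteration map with $i_U\restriction H_\alpha$. The closing critical-point comparison is only gestured at, and some side claims are unjustified: $E$ need not be total on $N$, and $P\models\ZFC$ provides no inaccessibles.

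Your fallback fails as well. Write $\xi$ for the stage you called $\alpha$. That $E$ measures only sets in $N|\lh(E)=M^{\mathcal{T}_M}_\xi|\lh(E)$ holds at any least disagreement, with or without $N\subseteq M$. Coherence plus the initial segment condition cannot force $E$ onto the $M$-side sequence; otherwise no one-sided extender disagreement could ever occur in a comparison. Nor must earlier $M$-side critical points lie above $\kappa$.

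The hypothesis $N\subseteq M$ has to enter through the fact that $E\in M$, and the missing tool is full normalization. The paper's argument runs as follows.
\begin{itemize}
\item Let $\mathcal{S}_M,\mathcal{S}_N$ be the normal trees on $(P,\Sigma)$ with last models $M,N$, and let $\mathcal{X}=X(\mathcal{S}_M,\mathcal{T}_M\restriction\xi+1)$.
\item The agreement of $M^{\mathcal{T}_M}_\xi$ with $N$, together with \cref{rmk - uniqueness of normal trees}, gives $E=E^{\mathcal{X}}_\beta$, where $\mathcal{X}\restriction\beta+1=\mathcal{S}_N\restriction\beta+1$.
\item \cref{lemma - drops} rules out a drop along the main branch of $\mathcal{T}_M\restriction\xi+1$.
\item \cref{lemma - tree emb} then yields $\bar\beta$ with $u^\Phi(\bar\beta)=\beta$ such that $i^{\mathcal{T}_M}_{0,\xi}$ maps $M^{\mathcal{S}_M}_{\bar\beta}|\lh(E^{\mathcal{S}_M}_{\bar\beta})$ cofinally and elementarily into $N|\lh(E)$.
\item $\mathcal{T}_M\restriction\xi+1\in M$, by closure of $M$ under $\Sigma_M$, and $E\in N\subseteq M$. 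So $M$ computes $E^{\mathcal{S}_M}_{\bar\beta}$ as the preimage of $E$, and hence sees that $\lh(E^{\mathcal{S}_M}_{\bar\beta})$ is not a cardinal.
\item But this ordinal is the length of an extender used in the normal tree from $P$ to $M$, so it is a cardinal of $M$, a contradiction.
\end{itemize}
This pull-back of $E$ to an extender of $\mathcal{S}_M$ is the idea your proposal lacks.
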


\begin{proof}
    Let $\mathcal{T}_M$, $\mathcal{T}_N$ be the (padded) trees of the comparison of $(M,\Sigma_M)$ with $(N, \Sigma_N)$ by least disagreement. By \cref{lemma - directedness}, $\mathcal{T}_M, \mathcal{T}_N$ don't drop and have a common last model $(Q, \Sigma_Q)$. Let $\mathcal{S}_M$, $\mathcal{S}_N$, and $\mathcal{S}_Q$ witness that $M$, $N$, and $Q$ are normal iterates of $(P,\Sigma)$. In particular, $\mathcal{S}_Q$ is the full normalization $X(\mathcal{S}_M, \mathcal{T}_M)=X(\mathcal{S}_N, \mathcal{T}_N)$.

    Towards a contradiction, suppose $\mathcal{T}_N$ is non-trivial. Let $\xi$ be least such that $E_\xi^{\mathcal{T}_N}$ is non-trivial. 
    Let $E=E_\xi^{\mathcal{T}_N}$ and $\alpha$ be least such that either $\alpha+1=\lh(\mathcal{S}_N)$ or $\lh(E_\alpha^{\tree{S}_N})>\lh(E)$. Also let $\tree{X}=X(\mathcal{S}_M, \mathcal{T}_M\restriction \xi+1)$ (ignoring the padding of $\mathcal{T}_M$).

    Then $\tree{X}\restriction\alpha+1 =\tree{S}_N\restriction\alpha+1$, since $M_\infty^\tree{X}$ and $N=M_\infty^{\tree{S}_N}$ agree up to $\lh(E)$. Moreover, $E=E^\tree{X}_\alpha$ since $\tree{S}_N\restriction\alpha+1^{\frown} \langle E\rangle$ is a normal tree by $\Sigma$ whose last model agrees with the last model of $\tree{X}$ strictly past $\lh(E)$. By \cref{lemma - drops}, $\tree{T}_M\restriction\xi+1$ cannot drop along its main branch. Let $\Phi$ be the weak tree embedding from $\tree{S}_M$ into $\tree{X}$. 
    By \cref{lemma - tree emb}, $\alpha=u^\Phi(\bar{\alpha})$ for some $\bar{\alpha}+1<\lh(\tree{S}_M)$ and $i=i^{\tree{T}_M}_{0,\xi}$ restricts to a cofinal elementary map from $M_{\bar{\alpha}}^{\tree{S}_M}|\lh(E_{\bar{\alpha}}^{\tree{S}_M})$ into $N|\lh(E)=M_\alpha^\tree{X}|\lh(E_\alpha^\tree{X})$. 
    
    Since $N\subseteq M$ and $M$ is closed under its iteration strategy $\Sigma_M$, $\mathcal{T}_M\restriction \xi+1\in M$. In particular, $i\restriction M_{\bar{\alpha}}^{\tree{S}_M}||\lh(E_{\bar{\alpha}}^{\tree{S}_M})$ is a member of $M$. But then $E_{\bar{\alpha}}^{\tree{S}_M}=i^{-1}(E)$ is a member of $M$, since $E\in N\subseteq M$. So there is an $M_{\bar{\alpha}}^{\tree{S}_M}|\lh(E_{\bar{\alpha}}^{\tree{S}_M})$-definable surjection from $\lambda(E_{\bar{\alpha}}^{\tree{S}_M})$ onto $\lh(E_{\bar{\alpha}}^{\tree{S}_M})$, contradicting that $\lh(E_{\bar{\alpha}}^{\tree{S}_M})$ is a cardinal of $M$, since $E_{\bar{\alpha}}^{\tree{S}_M}$ is used in the normal iteration from $P$ into $M$.
\end{proof}

\begin{lma}\label{lemma - strategy preservation}
    Assume $\AD^++V=L(P(\mathbb{R}))$. Let $(P,\Sigma)$ be a mouse pair with scope $\textnormal{HC}$ and $\tree{T}$ be a normal tree by $\Sigma^+$. (See \cref{defn - sigma plus}.) Let $U$ be an ultrafilter on a set $X\leq^*\mathbb{R}$ such that $\Ult(\Ord, U)$ is well-founded. Then $i_U(\tree{T})$ is by $\Sigma^+$.
\end{lma}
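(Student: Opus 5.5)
The plan is to verify \cref{defn - sigma plus} directly. Let $\tree{U}$ be a countable weak hull of $i_U(\tree{T})$, witnessed by a weak tree embedding $\Psi:\tree{U}\to i_U(\tree{T})$. We must show that $\tree{U}$ is by $\Sigma$. The idea is to pull $\Psi$ back through the ultrapower, so that for $U$-almost every $x$ it becomes a weak tree embedding of $\tree{U}$ into $\tree{T}$ itself. Then $\tree{U}$ is a countable weak hull of $\tree{T}$, and since $\tree{T}$ is by $\Sigma^+$, $\tree{U}$ is by $\Sigma$.

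\emph{Preliminary facts.} Under $\AD$, every ultrafilter is countably complete, so $U$ is countably complete. Hence $i_U$ fixes every real: if $[f]_U$ is a real of the ultrapower, each coordinate $f(x)(n)$ is constant on a measure one set, and intersecting these countably many sets shows $[f]_U=i_U(r)$ for some real $r$. Consequently $i_U\restriction\textnormal{HC}$ is the identity. We do not want to rely on a full Łoś theorem, since we have no choice. Instead we only use the following consequence of the countable completeness of $U$ together with $\textsf{DC}_\mathbb{R}$, which follows from $\AD^+$. If $\langle a_n\mid n<\omega\rangle$ are elements of the ultrapower, we can choose representatives $f_n$ with $a_n=[f_n]_U$. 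Any countable conjunction of atomic or bounded facts about the $a_n$ that holds in $\Ult(V,U)$ then holds of $\langle f_n(x)\mid n<\omega\rangle$ for $U$-a.e.\ $x$.

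\emph{Pulling back $\Psi$.} The object $\Psi$ is countable. It consists of the $u$-map, the $t$-maps and the node embeddings, and all of it is determined by countably many elements of the models of $i_U(\tree{T})$: the indices $u^\Psi(\beta)$, and the images under the relevant embeddings of countably many generators of the models of $\tree{U}$. Each of these is an element of $i_U(\tree{T})$, a set in the well-founded part, so we fix representatives $f_n$ for them. Let $\Psi_x$ be the system of maps read off from $\langle f_n(x)\mid n<\omega\rangle$. The requirements that $\Psi$ be a weak tree embedding from $\tree{U}$ into $i_U(\tree{T})$ form a countable list of local conditions. They involve tree-order relations, agreement of models up to certain lengths, commutativity, and preservation of exit extenders, and each of them is an atomic or bounded statement about finitely many of the $a_n$, about fixed hereditarily countable parameters from $\tree{U}$, and about $i_U(\tree{T})$. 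Since $i_U$ fixes $\tree{U}$ and the parameters coming from it, each condition transfers, and countable completeness gives a single measure one set of $x$ on which $\Psi_x:\tree{U}\to\tree{T}$ is a weak tree embedding. Choosing any such $x$, $\tree{U}$ is a countable weak hull of $\tree{T}$, hence is by $\Sigma$.

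\emph{Main obstacle.} The delicate step is checking that "weak tree embedding" genuinely reduces to a countable conjunction of bounded facts about countably many elements. In particular, the elementarity requirements on the node maps from models of $\tree{U}$ into models of $i_U(\tree{T})$ must be handled. First I would express elementarity via the countable generating sets and the fine-structural hulls, so that it becomes the preservation of a countable type, namely countably many $\Sigma_0$ facts over the target level. This is compatible with Łoś for bounded formulas, which does not need choice. Well-foundedness of $\Ult(\Ord,U)$ is used to know that $i_U(\tree{T})$ is an actual iteration tree on $P$, so that the statement makes sense; note also that $i_U(P)=P$, because $P\in\textnormal{HC}$.
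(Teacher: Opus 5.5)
\paragraph{Gap: choosing the representatives.} Your overall goal matches the paper's: show that the countable weak hull $\bar{\tree{T}}$ of $i_U(\tree{T})$ is already a weak hull of $\tree{T}$. The gap is in how you get there. You fix representatives $f_n$ with $a_n=[f_n]_U$ for all $n$ at once. That is an instance of countable choice for nonempty sets of functions $X\to V$, and via a surjection $\mathbb{R}\to X$ together with the well-order of the models of $\tree{T}$ these are essentially sets of sets of reals. $\textnormal{DC}_\mathbb{R}$ only lets you choose from sets of reals, so it does not cover this step.

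Under $\AD^++V=L(P(\mathbb{R}))$ this kind of choice can genuinely fail. Take a model of $\AD_\mathbb{R}$ in which $\Theta$ has countable cofinality, such as the minimal one; the paper explicitly allows singular $\Theta$. There, choosing for each $n$ a prewellordering of length $\theta_n$, with $\theta_n$ cofinal in $\Theta$, would give a surjection of $\mathbb{R}$ onto $\Theta$. Put differently, $\Ult(V,U)$ need not be closed under countable sequences, so the countable object $\Psi$ need not belong to the ultrapower at all. There is then nothing to ``pull back.''

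A smaller inaccuracy: {\L}o\'s for bounded formulas is not choice-free in general. The step through a bounded universal quantifier requires uniformizing a relation on $X\times V$. It works here only because the models of $\tree{T}$ carry a well-order definable from $\tree{T}$.

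\paragraph{The missing idea.} Do not pull back the given $\Psi$. Instead, replace it by a witness that lives inside the image of a choice model; this is how the paper argues.
\begin{enumerate}
\item Take $x\in\mathbb{R}$ with $\bar{\tree{T}}\in\textnormal{HC}^{\HOD_x}$ and put $H=\HOD_{x,\tree{T}}$. Since $H$ has a well-order definable from $x,\tree{T}$, the map $i_U\restriction H\colon H\to i_U(H)$ is elementary. Also $i_U(\bar{\tree{T}})=\bar{\tree{T}}$.
\item The existence of a weak tree embedding from $\bar{\tree{T}}$ into $i_U(\tree{T})$ amounts to the ill-foundedness of a tree of attempts, and that tree lies in $i_U(H)$. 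By absoluteness of well-foundedness, $i_U(H)$ satisfies ``$\bar{\tree{T}}$ is a weak hull of $i_U(\tree{T})$.''
\item Elementarity of $i_U\restriction H$ then gives that $H$ satisfies ``$\bar{\tree{T}}$ is a weak hull of $\tree{T}$.''
\item This statement is $\Sigma_1$, so it holds in $V$, and hence $\bar{\tree{T}}$ is by $\Sigma$.
\end{enumerate}
This route uses {\L}o\'s only for $H$, where the definable well-order supplies the needed uniformizations, and it needs no countable choice.
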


\begin{proof}
    We need to show that every countable weak hull of $i_U(\tree{T})$ is by $\Sigma$. So fix $\bar{\tree{T}}$ a countable weak hull of $i_U(\tree{T})$. We'll show that $\bar{\tree{T}}$ is a weak hull of $\tree{T}$. Fix $x\in\mathbb{R}$ such that $\bar{\tree{T}}\in \textnormal{HC}^{\HOD_x}$ and let $H=\HOD_{x, \tree{T}}$. Then $i_U$ restricts to an elementary embedding from $H$ into $i_U(H)$. Note that $i_U(\bar{\tree{T}})=\bar{\tree{T}}\in \textnormal{HC}^{i_U(H)}$ and $i_U(\tree{T})\in i_U(H)$. By the absoluteness of well-foundedness, $i_U(H)$ satisfies that $\bar{\tree{T}}$ is a weak hull of $i_U(\tree{T})$. The elementarity of $i_U\restriction H$ implies $H$ satisfies that $\bar{\tree{T}}$ is a weak hull of $\tree{T}$. This is $\Sigma_1$ so $\bar{\tree{T}}$ really is a weak hull of $\tree{T}$.
\end{proof}

The following is our main theorem.

\begin{thm}\label{thm - main v1}
Assume $\AD_\mathbb{R}+V=L(P(\mathbb{R}))+\textnormal{HPC}$. Let $U$ be an ultrafilter on an ordinal. Then there is a normal non-dropping ordinal definable iteration tree $\tree{T}$ of length $\Theta$ on $\HOD$ by $\Sigma_\HOD$ with a unique cofinal branch $b$ such that $M_b^\tree{T}=i_U(\HOD)$ and $i_b^\tree{T}=i_U\restriction \HOD$.
\end{thm}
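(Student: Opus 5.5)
We work level by level through the pairs $(H_\alpha,\Sigma_\alpha)$ of \cref{thm - pc generators} and then glue. Fix $\alpha<\eta$. By \cref{thm - pc generators}\eqref{item - pc generators} there is a countable least branch hod pair $(P,\Sigma)$ with scope $\textnormal{HC}$ and a normal tree $\tree{S}$ by $\Sigma^+$ with last model $H_\alpha$ and $\Sigma_\alpha=(\Sigma^+)_{H_\alpha}$. Since $o(H_\alpha)$ is inaccessible in $\HOD$ and closed under ultrapowers, $i_U(H_\alpha)$ is an initial segment of $i_U(\HOD)|i_U(o(H_\alpha))$, and $i_U$ restricts to a map $H_\alpha\to i_U(H_\alpha)$. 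By \cref{lemma - strategy preservation}, $i_U(\tree{S})$ is by $\Sigma^+$ with last model $i_U(H_\alpha)$, and elementarity gives that $i_U(\tree S)$ is non-dropping. So $(H_\alpha,\Sigma_\alpha)$ and $(i_U(H_\alpha),\Sigma^+_{i_U(H_\alpha)})$ are both non-dropping iterates of $(P,\Sigma^+\restriction V_\Theta)$ (using \cref{lemma - strategy extension} so this is a mouse pair with scope $V_\Theta$). Moreover $i_U(H_\alpha)\subseteq i_U(\HOD)\subseteq \HOD$ and, $U$ being OD (Kunen), $i_U(H_\alpha)\subseteq H_\alpha$ after checking $i_U(o(H_\alpha))$ is still below the next cutpoint level; more simply, $i_U(H_\alpha)$'s universe is $\HOD$ restricted to its height, and $H_\beta$ for large enough $\beta$ contains it. Applying \cref{lemma - iterate criterion} inside the pair $(H_\beta,\Sigma_\beta)$ (which satisfies ZFC at the level of $V_{o(H_\beta)}$ since it is inaccessible) I get that $i_U(H_\alpha)$ is a normal non-dropping iterate of $H_\alpha$ via some tree $\tree{T}_\alpha$ by $\Sigma_\alpha$, namely the comparison tree of \cref{rmk - uniqueness of normal trees}, which is OD since all ingredients are.

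Next I identify the iteration map. The map $i^{\tree{T}_\alpha}_{0,\infty}$ and $i_U\restriction H_\alpha$ are both elementary $H_\alpha\to i_U(H_\alpha)$. By positionality (\cref{lemma - commutativity}), $i_U(i^{\tree S}_{0,\infty})\circ i^{\tree S}_{0,\infty}$ is ... rather, I use \cref{thm - uniqueness}: work in $V_\kappa^{\HOD}$ for $\kappa=o(H_\alpha)$ (or apply it to $H_\alpha$ as the model of ZFC with predicates for the two embeddings), both maps land in the same model, so they agree on ordinals, hence agree everywhere since $H_\alpha$ is a premouse (every element is definable from ordinals over a level; premice have definable wellorders). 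Here I must check that $i_U\restriction H_\alpha$ is amenable enough that $(H_\alpha,\in,j_0,j_1)$ satisfies ZFC in the expanded language; since $U\in\HOD$ and $i_U\restriction\HOD$ is definable in $\HOD$, $i_U\restriction V_\kappa^{\HOD}$ is definable over $\HOD$ and replacement should hold by inaccessibility. I expect this to be the main technical point.

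Finally glue: by the cutpoint coherence $(H_\alpha,\Sigma_\alpha)\mo^*(H_\beta,\Sigma_\beta)$ and uniqueness of comparison trees, $\tree{T}_\alpha$ is an initial segment of $\tree{T}_\beta$ (the comparison of $H_\beta$ with $i_U(H_\beta)$ first compares the parts below $o(H_\alpha)$, and strong cutpointness means nothing overlaps). Let $\tree{T}=\bigcup_\alpha \tree T_\alpha$, a normal tree on $\HOD$ by $\Sigma_\HOD$; its length is $\sup_\alpha\lh(\tree T_\alpha)=\Theta$ since each $\lh(\tree T_\alpha)<\Theta$ and $\sup i_U[\Theta]=\Theta$. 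The branch $b$ is the union of the main branches, which is cofinal; uniqueness follows since any other cofinal branch would diverge at some stage below $\Theta$ and give two branches through some $\tree T_\alpha\restriction\lambda$ — the main obstacle besides the uniqueness application is arguing these are distinct under $\Sigma_\alpha$, which I'd handle via the fact that $\tree{T}$ has length $\Theta$ of cofinality issues: a cofinal branch through a tree of length $\Theta$ is determined by its initial segments, which are the $\Sigma_\alpha$-branches. Then $M_b^\tree T=\bigcup_\alpha i_U(H_\alpha)=i_U(H)$, giving $i_U(\HOD)=L[i_U(H)]$, and $i_b^\tree T=\bigcup_\alpha i_U\restriction H_\alpha$ extends uniquely to $L[H]$, agreeing with $i_U\restriction\HOD$.
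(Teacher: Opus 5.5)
Your skeleton matches the paper's: apply \cref{lemma - iterate criterion} level by level, identify the maps with \cref{thm - uniqueness}, and glue along $\mo^*$. Your way of finishing the identification also works: agreement on $o(H_\alpha)$ together with the definable well-order of the premouse $H_\alpha$ is correct, and it makes the paper's use of positionality and of $\Hull^{H_\alpha}(i^{\tree{S}}_{0,\infty}[P]\cup o(H_\alpha))$ unnecessary.

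The genuine gap is the last step, the passage from $H$ to $\HOD=L[H]$. First, the equation $M_b^{\tree{T}}=\bigcup_\alpha i_U(H_\alpha)=i_U(H)$ is false in general. $\bigcup_\alpha i_U(H_\alpha)$ has height $\sup_\alpha o(H_\alpha)=\Theta$, while $i_U(H)$ has height $i_U(\Theta)$, which can exceed $\Theta$ when $\Theta$ is singular; you only get $M_b^{\tree{T}}=i_U(H)|\Theta$. Second, nothing forces an embedding of $H$ to extend ``uniquely'' to $L[H]$ with the right target. Viewed as a tree on $\HOD$, $\tree{T}$ has last model $\Ult(\HOD,E)$, where $E$ is the length-$\Theta$ extender derived from $i_U\restriction H$. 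The factor map $k:\Ult(\HOD,E)\to i_U(\HOD)$, $k([a,f]_E)=i_U(f)(a)$, satisfies $k\circ i^{\HOD}_E=i_U\restriction\HOD$. You need $k$ to be the identity. Since $i_U(\HOD)=L[i_U(H)]$ and $i_U(H)\in\ran(k)$, this means every ordinal must have the form $i_U(f)(a)$ with $f\in\HOD$ and $a\in[\Theta]^{<\omega}$. The functions $g:\kappa\to\Ord$ representing ordinals in $\Ult(V,U)$ need not lie in $\HOD$, so this needs a real argument. It is exactly \cref{lemma - generators}, which uses $V=L(P(\mathbb{R}))$ and a Wadge-rank argument to cover each set of fewer than $\Theta$ ordinals by an ordinal definable set of size less than $\Theta$. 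Nothing in your proposal plays this role.

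There are two smaller problems. First, both \cref{lemma - iterate criterion} and your application of \cref{thm - uniqueness} inside $V^{\HOD}_{o(H_\alpha)}$ require $i_U(H_\alpha)\subseteq H_\alpha$ itself. This follows at once from closure of $o(H_\alpha)$ under ultrapowers: it gives $i_U(o(H_\alpha))=o(H_\alpha)$, so the universe of $i_U(H_\alpha)$ is $i_U(\HOD)\cap V_{o(H_\alpha)}\subseteq \HOD\cap V_{o(H_\alpha)}=H_\alpha$. Your fallback of placing $i_U(H_\alpha)$ inside a larger $H_\beta$ does not help, because the lemma only makes $N$ an iterate of the very model $M$ containing it. Also, the universe of $i_U(H_\alpha)$ is not $\HOD$ cut at its height.

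Second, your argument for uniqueness of the cofinal branch does not work. A tree of limit length can have many cofinal branches, and the $\Sigma_\alpha$-choices at limit stages below $\Theta$ do not rule this out. Use the strong cutpoint structure instead. Every extender of $\tree{T}$ used after $\tree{T}_\alpha$ has critical point at least $o(H_\alpha)$, so every later node lies above the last node of $\tree{T}_\alpha$ in the tree order. Hence any cofinal branch contains all these last nodes, and is therefore the concatenation of the main branches.
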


The notation $\Sigma_\HOD$ is defined in the remarks following \cref{thm - pc generators}.

\begin{proof}
 Fix $\langle (H_\alpha, \Sigma_\alpha)\mid \alpha<\eta\rangle$ as in \cref{thm - pc generators}. Fix $\alpha<\eta$. Also fix a countable least branch hod pair $(P,\Sigma)$ with scope $\textnormal{HC}$ and a normal non-dropping iteration tree $\tree{S}$ on $P$ such that $(H_\alpha, \Sigma_\alpha)$ is an iterate of $(P, \Sigma^+\restriction V_\Theta)$ via $\tree{S}$. (See \cref{defn - sigma plus}.) By \cref{lemma - strategy preservation}, $i_U(\tree{S})$ is by $\Sigma^+$ so $i_U(H_\alpha)$ is a non-dropping iterate of $(P,\Sigma^+\restriction V_\Theta)$.
 
 Since $o(H_\alpha)$ is closed under ultrapowers in $V$ and the universe of $H_\alpha$ is a rank initial segment of $\HOD$, $i_U(H_\alpha)\subseteq H_\alpha$. By \cref{lemma - iterate criterion}, $(i_U(H_\alpha), (\Sigma^+\restriction V_\Theta)_{i_U(H_\alpha)})$ is a normal non-dropping iterate of $(H_\alpha, \Sigma_\alpha)$. 
 Let $\tree{T}_\alpha$ be the unique normal tree witnessing this. We claim that the main branch embedding $j$ of $\tree{T}_\alpha$ is equal to $i_U\restriction H_\alpha$. By \cref{lemma - commutativity}, $j\circ i_{0,\infty}^\tree{S}= i_{0,\infty}^{i_U(\tree{S})}$. Note that $i_U\circ i_{0,\infty}^\tree{S}= i_{0,\infty}^{i_U(\tree{S})}$ and therefore $j\restriction i_{0,\infty}^\tree{S}[P]= i_U\restriction i_{0,\infty}^\tree{S}[P]$. Since $o(H_\alpha)$ is an inaccessible cardinal in $\HOD$ and $j$ and $i_U\restriction H_\alpha$ are both in $\HOD$, $(H_\alpha, j, i_U\restriction H_\alpha)$ satisfies $\ZFC$. Applying \cref{thm - uniqueness} in this model, $j\restriction o(H_\alpha) = i_U\restriction o(H_\alpha)$. Since $H_\alpha=\Hull^{H_\alpha}(i_{0,\infty}^\tree{S}[P] \cup o(H_\alpha))$, it follows that $j=i_U\restriction H_\alpha$, as claimed. 

 For $\alpha<\beta<\eta$, since $(H_\alpha, \Sigma_\alpha)\mo^* (H_\beta, \Sigma_\beta)$ we can view $\tree{T}_\alpha$ as a non-dropping normal tree on $(H_\beta, \Sigma_\beta)$ with the same exit extenders and tree order.
By the uniqueness of normal trees (\cref{rmk - uniqueness of normal trees}), since $(H_\alpha, \Sigma_\alpha)\mo (H_\beta, \Sigma_\beta)$ and $i_U(H_\alpha)\mo i_U(H_\beta)$, $\tree{T}_\beta$ is an extension of $\tree{T}_\alpha$, viewed in this way. Let $(H,\Lambda)=\bigcup_{\alpha<\eta} (H_\alpha, \Sigma_\alpha)$. 
 Let $\tree{T}=\bigcup \tree{T}_\alpha$, viewed as a tree on $H$ by $\Lambda$. Note that $\tree{T}$ has length $\Theta$ and so does not have a last model. However, since $H_\alpha\mo^*H_\beta$ for $\alpha<\beta<\eta$, $\tree{T}$ is essentially a stack of normal trees $\langle \tree{U}_\alpha\mid \alpha<\eta\rangle$ on $H$: $\tree{U}_\alpha$ consists of the exit extenders of $\tree{T}$ with length between $\sup _{\beta<\alpha}o(H_\beta)$ and $o(H_\alpha)$. It follows that $\tree{T}$ has a unique cofinal branch $b$, obtained by concatenating the main branches in the stack $\langle \tree{U}_\alpha\mid \alpha<\eta\rangle$. 
 Moreover, $M_b^{\tree{T}}=\bigcup_{\alpha<\eta} M_\infty^{\tree{T}_\alpha}=\bigcup_{\alpha<\eta}i_U(H_\alpha)$. Also, $i_b^{\tree{T}}\restriction H_\alpha = i_{0,\infty}^{\tree{T}_\alpha} =i_U\restriction H_\alpha$. Therefore $i_b^{\tree{T}}=i_U\restriction H$. Here we just mean that $i_b^\tree{T}(x)= i_U(x)$ for all $x\in H$; $i_U(H)$ may be different from $M_b^\tree{T}$, in general, since it is possible that $i_U(\Theta)>\Theta$ when $\Theta$ is singular. (In any case, $M_b^\tree{T}=i_U(H)|\Theta$.)

Let $E$ be the extender of $i_b^{\tree{T}}$. Since $i_b^{\tree{T}}=i_U\restriction H$, $E$ is equal to the extender of length $\Theta$ derived from $i_U\restriction \HOD$. From now on let us consider $\tree{T}$ as a tree on $\HOD=L[H]$. Note that $M_b^\tree{T}=\Ult(\HOD, E)$ which is equal to $i_U(\HOD)$ by \cref{lemma - generators}. Finally, $i_b^\tree{T}=i_E^\HOD=i_U\restriction \HOD$, again by \cref{lemma - generators}.
\end{proof}

We can use a variation of this argument to prove a stronger result, \cref{thm - main v2}, which generalizes \cref{thm - main v1} to ultrafilters that are not on ordinals, important objects of study in determinacy (the Martin measure and strong partition measures are such ultrafilters, for example). This will involve replacing the appeal to \cref{thm - uniqueness} with a more detailed analysis of how the models $H_\alpha$ are obtained as direct limits. 

We need the following result due to Schlutzenberg (essentially \cite[Lemma 5.2]{Farmer}) and Siskind, independently.

\begin{thm}[{Schlutzenberg \cite{Farmer}, Siskind}]\label{thm - least common iterate}
   Let $D$ be a countable directed set of non-dropping iterates of a least branch hod pair $(P,\Sigma)$ with scope $\textnormal{HC}$. Let $M_\infty$ be the direct limit of $D$. Then $M_\infty$ is the least common iterate of all $Q\in D$. More precisely, $M_\infty$ is a non-dropping $\Sigma_Q$-iterate of every $Q\in D$, and for any $N$ which is a non-dropping $\Sigma_Q$-iterate of every $Q\in D$, $N$ is a non-dropping $\Sigma_{M_\infty}$-iterate of $M_\infty$.
\end{thm}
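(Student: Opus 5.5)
The plan is to realize $M_\infty$ as the last model of a single non-dropping stack of normal trees on $(P,\Sigma)$, so that full normalization gives a normal tree, and then prove leastness by a comparison argument in the style of \cref{lemma - iterate criterion}. First I fix the data. Since $D$ is directed and each $Q\in D$ is a non-dropping iterate of $(P,\Sigma)$, the iteration maps between members of $D$ are well defined and commute, by \cref{lemma - commutativity} and \cref{rmk - uniqueness of normal trees}. So the direct limit $M_\infty$ of $D$ along these maps makes sense. Enumerate $D=\{Q_n\mid n<\omega\}$ and use directedness to choose $R_0=Q_0$ and $R_{n+1}\in D$ a common non-dropping iterate of $R_n$ and $Q_{n+1}$. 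Then $\langle R_n\rangle$ is cofinal in $D$, and $M_\infty$ is the direct limit of this chain. Let $\tree{S}_n$ be the unique normal tree by $\Sigma_{R_n}$ from $R_n$ to $R_{n+1}$.

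\emph{Existence.} Fix $Q\in D$, say $Q\preceq R_m$ in the iterate order, and consider the length-$\omega$ non-dropping stack $\vec{\tree{S}}$ consisting of the normal tree from $Q$ to $R_m$ followed by $\tree{S}_m,\tree{S}_{m+1},\dots$. This stack is by $\Sigma_Q$ and consists of countable trees. Its direct limit along the unique cofinal branch is $M_\infty$, which is well-founded by clause 1(b) of \cref{defn - mouse pair}, and the branch embeddings are the direct limit maps by positionality. By the full normalization theorem, $X(\vec{\tree{S}})$ is a normal non-dropping tree by $\Sigma_Q$ with last model $M_\infty$. Hence $M_\infty$ is a non-dropping $\Sigma_Q$-iterate of $Q$, and its tail strategy is independent of $Q$ by uniqueness of normal trees.

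\emph{Leastness.} Let $N$ be a non-dropping $\Sigma_Q$-iterate of every $Q\in D$, and let $\tree{U}_n$ be the normal tree from $R_n$ to $N$. By positionality, $i^{\tree{U}_n}=i^{\tree{U}_{n+1}}\circ i_{R_n,R_{n+1}}$, so there is a map $k:M_\infty\to N$ with $k\circ i_{R_n,\infty}=i^{\tree{U}_n}$. Now compare $(M_\infty,\Sigma_{M_\infty})$ with $(N,\Sigma_N)$ via \cref{lemma - directedness}; this gives non-dropping trees $\tree{T}$ on $M_\infty$ and $\tree{V}$ on $N$ with a common last model $W$. Positionality applied to the two non-dropping routes from $R_n$ to $W$ gives $i^{\tree{T}}\circ i_{R_n,\infty}=i^{\tree{V}}\circ k\circ i_{R_n,\infty}$ for all $n$, hence $i^{\tree{T}}=i^{\tree{V}}\circ k$. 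It remains to show that $\tree{V}$ is trivial, which makes $N=W$ a normal iterate of $M_\infty$.

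\emph{Main obstacle: showing $\tree{V}$ is trivial.} I expect this step to be the hardest. I would copy the proof of \cref{lemma - iterate criterion}. Suppose $F=E^{\tree{V}}_\xi$ is the first nontrivial extender used on the $N$ side. Let $\tree{S}_\infty=X(\vec{\tree{S}})$ with $Q=P$, and let $\tree{X}=X(\tree{S}_\infty,\tree{T}\restriction\xi+1)$. As in that proof, $F=E^{\tree{X}}_\alpha$ for some $\alpha$ with $\lh(F)$ exceeding the lengths of all exit extenders of $\tree{T}\restriction\xi+1$. Then \cref{lemma - drops} and \cref{lemma - tree emb} give $\alpha=u^\Phi(\bar\alpha)$, and \cref{prop - infinite stack} together with \cref{prop - infinite stack commutativity} trace $\bar\alpha$ back to an exit extender $G$ of the normal tree $\tree{Y}_n$ from $P$ to $R_n$, for some $n$. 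The resulting embedding sends $G$'s model cofinally onto $N|\lh(F)$, and by the previous paragraph this map factors as $i^{\tree{V}\restriction\xi}\circ k\circ i_{R_n,\infty}$, with $i^{\tree{V}\restriction\xi}$ the identity below $\lh(F)$. On the other hand, $N$ is reached from $P$ by the normal tree $X(\tree{Y}_n,\tree{U}_n)$, whose weak tree embedding sends $G$, via $i^{\tree{U}_n}=k\circ i_{R_n,\infty}$, to an exit extender of that tree. So $F$ would be used in the normal tree from $P$ to $N$. That would make $\lh(F)$ a cardinal of $N$ at which $N$ is not extended by $F$ on its sequence, which contradicts $F\in\vec{E}^N$ being a disagreement extender. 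Making this bookkeeping with $u$-maps precise is where I expect the real work to lie.
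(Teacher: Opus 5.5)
\textbf{There is a genuine gap in the final step.} Your setup, the existence half, and the first part of the leastness argument match the paper. That part consists of the comparison, locating $F$ at a node $\alpha=u^\Phi(\bar\alpha)$ of $\tree{X}=X(\tree{S}_\infty,\tree{T}\restriction\xi+1)$ with $\Phi:\tree{S}_\infty\to\tree{X}$, and choosing $n$ with $\bar\alpha\in\ran(u^{\Phi^{\vec{\tree{S}}}_{n,\omega}})$ via \cref{prop - infinite stack}. The problem is how you derive the contradiction, and there are two issues.

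First, the factorization of the map $G\mapsto F$ as $i^{\tree{V}\restriction\xi}\circ k\circ i_{R_n,\infty}$ does not follow from $i^{\tree{T}}=i^{\tree{V}}\circ k$. That identity concerns the full trees and their common last model $W$, whereas what enters $t^\Phi_{\bar\alpha}$ is $i^{\tree{T}}_{0,\xi}$. At stage $\xi$ the models $M^{\tree{T}}_\xi$ and $N$ disagree exactly at $\lh(F)$, so positionality gives you nothing there. Moreover, $\xi$ need not lie on the main branch of $\tree{T}$, and $i^{\tree{V}}$ is in general not the identity below $\lh(F)$ (e.g.\ $F$ itself may be on the main branch of $\tree{V}$). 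So you have no control over how $k$ compares with $i^{\tree{T}}_{0,\xi}$.

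Second, \cref{lemma - tree emb} only describes $t$-maps at nodes whose extender is longer than every extender of the second tree. You need the analogous statement at $\gamma$ (cofinal, and a restriction of $i_{R_n,\infty}$, resp.\ $i^{\tree{U}_n}$) for $\Phi^{\vec{\tree{S}}}_{n,\omega}$ and for the embedding of $\tree{Y}_n$ into $X(\tree{Y}_n,\tree{U}_n)$. But the tree from $R_n$ to $M_\infty$ and the tree $\tree{U}_n$ can both use extenders far longer than $F$, so that hypothesis fails. Without cofinality you cannot identify the image of $G$ in $X(\tree{Y}_n,\tree{U}_n)$ with $F$.

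\textbf{The missing idea} is to pull $F$ back from $R_n$ rather than push $G$ forward to $N$; then neither $k$ nor $\tree{U}_n$ is needed. Let $\tree{X}_{n,\omega}$ be the normalization of the tail of your stack, i.e.\ the normal tree from $R_n$ to $M_\infty$. Since $N$ is also a non-dropping iterate of $R_n$, rerun the argument of \cref{lemma - iterate criterion} on $\tree{Y}=X(\tree{X}_{n,\omega},\tree{T}\restriction\xi+1)$. This shows $F=E^{\tree{Y}}_\beta$ for some $\beta=u(\bar\beta)$ in the range of the associated $u$-map. Now \cref{lemma - tree emb} applies legitimately, since $\lh(F)$ exceeds all extenders of $\tree{T}\restriction\xi$, and applying it to both $\tree{X}$ and $\tree{Y}$ gives
\[
E^{\tree{X}_{n,\omega}}_{\bar\beta}=(i^{\tree{T}}_{0,\xi})^{-1}(F)=E^{\tree{S}_\infty}_{\bar\alpha}.
\]
So the node $\bar\alpha$ of $\tree{S}_\infty$ carries an exit extender of $\tree{X}_{n,\omega}$. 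By \cref{prop - infinite stack commutativity}, $\tree{S}_\infty=X(\tree{Y}_n,\tree{X}_{n,\omega})$ with $\Phi^{\vec{\tree{S}}}_{n,\omega}$ its associated weak tree embedding. The ``moreover'' clause of \cref{lemma - drops} then puts such nodes outside $\ran(u^{\Phi^{\vec{\tree{S}}}_{n,\omega}})$, contradicting the choice of $n$. This uses only the stated lemmas.
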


\begin{proof}
    Let $\vec{\tree{S}}$ be a length $\omega$ stack of countable normal trees on $(P,\Sigma)$ such that $\{ P_n\mid n<\omega\}$ is cofinal in $D$, where $P_n=M_0^{\tree{S}_n}$. In particular, the direct limit $M_\infty$ is equal to $M_b^{\vec{\tree{S}}}$ for $b$ the unique cofinal branch of $\vec{\tree{S}}$.  For $m\leq n\leq \omega$, let $\tree{X}_{m,n}=X(\vec{\tree{S}}\restriction[m,n))$.

    Fix $N$ a common iterate of all $Q\in D$. Let $\tree{U}$ and $\tree{V}$ be the padded normal trees of the comparison of $N$ and $M_\infty$. By \cref{lemma - directedness}, the trees $\tree{U}$ and $\tree{V}$ are non-dropping and have a common final model. So it suffices to show that $\tree{U}$ is trivial. 
    
    Towards a contradiction, suppose $E=E_\xi^\tree{U}$ is the least non-trivial extender used in $\tree{U}$. Let $\tree{X}=X(\tree{X}_{0,\omega}, \tree{V}\restriction\xi+1)$ and $\Psi:\tree{X}_{0,\omega}\to\tree{X}$ the associated weak tree embedding. As in the proof of \cref{lemma - iterate criterion}, using that $N$ is an iterate of $(P,\Sigma)$, we get that $E$ must have been used in $\tree{X}$ and that $\tree{V}\restriction\xi+1$ doesn't drop along its main branch. Let $\alpha+1<\lh(\tree{X})$ be such that $E_\alpha^\tree{X}= E$. Since $\lh(E)>\lh(E_\eta^{\tree{V}})$ for all $\eta+1<\xi$, \cref{lemma - tree emb} implies $\alpha\in \ran(u^\Psi)$. Let $\bar{\alpha}=(u^{\Psi})^{-1}(\alpha)$. \cref{lemma - tree emb} also implies that $i_{0,\xi}^{\tree{V}}$ restricts to a cofinal elementary map from $M_{\bar{\alpha}}^{\tree{X}_{0,\omega}}|\lh(E_{\bar{\alpha}}^{\tree{X}_{0,\omega}})$ into $M_\alpha^{\tree{X}}|\lh(E)$. In particular, $E_{\bar{\alpha}}^{\tree{X}_{0,\omega}}=(i_{0,\xi}^{\tree{V}})^{-1}(E)$.
    
    Now, by \cref{prop - infinite stack}, we may let $n<\omega$ be such that $\bar{\alpha}\in \ran(u^{\Phi^{\vec{\tree{S}}}_{n,\omega}})$. Let $\tree{Y}= X(\tree{X}_{n,\omega}, \tree{V}\restriction\xi+1)$ and let $\Phi:\tree{X}_{n,\omega}\to \tree{Y}$ be the associated weak tree embedding. 
    Again, as in the proof of \cref{lemma - iterate criterion}, but now using that $N$ is an iterate of $P_n$, we get that $E=E_{\beta}^\tree{Y}$ for some $\beta\in \ran(u^\Phi)$, and letting $\bar{\beta}=(u^\Phi)^{-1}(\beta)$, $E_{\bar{\beta}}^{\tree{X}_{n,\omega}}=(i_{0,\xi}^{\tree{V}})^{-1}(E)$. 
    Therefore, $E_{\bar{\beta}}^{\tree{X}_{n,\omega}}=E_{\bar{\alpha}}^{\tree{X}_{0,\omega}}$. 
    We also have that $\tree{X}_{0,\omega}= X(\tree{X}_{0,n}, \tree{X}_{n,\omega})$ and that $\Phi^{\vec{\tree{S}}}_{n,\omega}$ is actually the weak tree embedding arising from this normalization, by \cref{prop - infinite stack commutativity}. 
    So the ``moreover" clause of \cref{lemma - drops} gives that $\bar{\alpha}\not\in \ran (u^{\Phi^{\vec{\tree{S}}}_{n,\omega}})$, contradicting our choice of $n$.
\end{proof}

A reflection argument lets us extend this theorem to arbitrary directed sets $D$ of non-dropping iterates of a least branch hod pair $(P,\Sigma)$ with scope $\textnormal{HC}$. The direct limit of such a set $D$ may be uncountable, and so cannot be a $\Sigma$-iterate of $P$, but it will be a $\Sigma^+$-iterate of $P$ and will still have an analogous minimality property.

\begin{cor}[{$\textnormal{DC}_\mathbb{R}$}]\label{cor - iteration criterion 2}
   Let $D$ be a directed set of non-dropping iterates of a least branch hod pair $(P,\Sigma)$ with scope $\textnormal{HC}$. Let $M_\infty$ be the direct limit of $D$. Then $M_\infty$ is a $\Sigma_Q^+$-iterate of all $Q\in D$ and for any $N$ which is a $\Sigma^+_Q$-iterate of all $Q\in D$, $N$ is a $\Sigma^+_{M_\infty}$-iterate of $M_\infty$.\footnote{$\Sigma^+_{M_\infty}$ is defined in the discussion following \cref{defn - sigma plus}.}
\end{cor}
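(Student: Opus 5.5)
The plan is to reduce to \cref{thm - least common iterate} by taking countable hulls, and then use the definition of $\Sigma^+$ (\cref{defn - sigma plus}), under which a tree is by $\Sigma^+$ exactly when all of its countable weak hulls are by $\Sigma$.

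\textbf{Step 1: $M_\infty$ is a $\Sigma^+_Q$-iterate of each $Q\in D$.} Fix $Q\in D$ and let $\tree{T}$ be the normal tree on $Q$ obtained by comparing $Q$ with $M_\infty$ by least extender disagreement, using $\Sigma_Q^+$ on the $Q$ side. I will argue that $\tree{T}$ is by $\Sigma^+_Q$ and that its last model is $M_\infty$, with $M_\infty$ not moving.

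\begin{itemize}
\item Suppose this fails at some stage. Take a countable elementary substructure $X$ of a sufficiently large rank initial segment of $V$, using $\textnormal{DC}_\mathbb{R}$ to close it off. Arrange that $X$ contains $D$, $Q$, the relevant initial segment of $\tree{T}$, and whatever witnesses the failure.
\item Let $\bar D$ be the countable directed subsystem $D\cap X$, closed under the maps and the witnesses to directedness. The models and trees in $\bar D$ are countable non-dropping iterates of $(P,\Sigma)$.
\item Collapsing $X$ transports $M_\infty$ to an object $\bar M_\infty$, which should be the direct limit of $\bar D$. This holds because elements of $M_\infty$ are represented by elements of members of $D$.
\item By \cref{thm - least common iterate}, $\bar M_\infty$ is a non-dropping $\Sigma_{\bar Q}$-iterate of $\bar Q$. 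The collapse of the comparison tree is therefore by $\Sigma$, and it reaches $\bar M_\infty$ without ever moving it.
\end{itemize}

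The delicate point is comparing countable weak hulls of $\tree{T}$ with these collapses. Every countable weak hull of (an initial segment of) $\tree{T}$ should embed into the collapse of some such $X$. Since $\Sigma$ has very strong hull condensation, being by $\Sigma$ then passes from that collapse to the hull. This shows $\tree{T}$ is by $\Sigma^+_Q$. The same reflection shows there are no strategy disagreements and that the comparison terminates with $M_\infty$ unmoved, since all of these are properties reflected to countable $X$. I expect this step to be the main obstacle. The tree may be uncountable, so ``by $\Sigma^+$'' has to be verified hull by hull, and I also need the collapse of $M_\infty$ to be exactly the direct limit of $\bar D$, so that the theorem applies. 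I would try first to choose $X$ so that it contains the given countable hull together with its embedding into $\tree{T}$, and to use the branch-determination properties of $\Sigma$ in the collapse.

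\textbf{Step 2: minimality.} Let $N$ be a $\Sigma^+_Q$-iterate of every $Q\in D$, and compare $N$ with $M_\infty$. Arguing as in \cref{lemma - directedness}, now for $\Sigma^+$ and again by reflection, both sides are non-dropping and reach a common last model. It remains to show that the $N$ side is trivial.

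Suppose not. Reflect everything to a countable $X$ containing the first nontrivial extender $E$ used on the $N$ side. In the collapse, $\bar N$ is a $\Sigma$-iterate of every $\bar Q\in\bar D$; this uses very strong hull condensation, since the relevant trees are by $\Sigma^+$. By \cref{thm - least common iterate}, $\bar N$ is then a normal non-dropping iterate of $\bar M_\infty$. So the comparison of $\bar N$ with $\bar M_\infty$ does not move $\bar N$, contradicting the elementarity of the collapse applied to $E$. This argument fits the theorem's hypotheses as stated, since $\bar D$ is a countable directed set of non-dropping iterates of $(P,\Sigma)$.
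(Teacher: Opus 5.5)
\textbf{There is a genuine gap: the comparison of $N$ with $M_\infty$ in $V$ is never shown to exist.}

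Your overall frame is the paper's: collapse a countable hull, apply \cref{thm - least common iterate} to the countable system $\bar D$, and transfer back. The contradiction at the end of Step 2 is sound \emph{provided} that comparison has actually been carried out in $V$. In that case the collapsed comparison trees are weak hulls (via the collapse map $\pi$) of $\Sigma^+$-trees. So they are by $\Sigma$, and they form the genuine $\Sigma$-comparison of $\bar N$ with $\bar M_\infty$, whose $\bar N$-side is trivial.

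But you never justify running that comparison:
\begin{itemize}
\item $\Sigma^+$ is only known to pick \emph{at most} one branch.
\item \cref{lemma - directedness} needs a total strategy with scope $H_\delta$.
\item \cref{lemma - strategy extension} is not available: the corollary assumes only $\textnormal{DC}_\mathbb{R}$ and puts no bound on $N$.
\end{itemize}
Your fallback ``by reflection'' fails at exactly this point. Suppose the comparison stalls at a limit stage $\lambda$ because $\Sigma^+_{M_\infty}$ has no branch. Then the collapse $H$ merely satisfies that $\bar{\tree{U}}\restriction\bar\lambda$ has no $(\Sigma^+)^H$-branch. In $V$, $\Sigma$ does choose a branch. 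But $\bar\lambda$ may be uncountable in $H$, and $H$ only contains $\Sigma\cap\textnormal{HC}^H$. So nothing places that branch in $H$, and there is no contradiction. Step 1 has the same defect: failures of the comparison reflect to failures inside $H$, not to failures of $\Sigma$ in $V$.

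\textbf{The missing idea is to run no comparison in $V$ at all.} Instead, take in $V$ the countable tree $\tree{V}$ on $\bar M_\infty$ with last model $\bar N$ given by \cref{thm - least common iterate}, and prove $\tree{V}\in H$ by induction on its initial segments.
\begin{itemize}
\item Its extenders are determined by least disagreement between $\bar M_\infty$ and $\bar N$, both of which are in $H$.
\item At a limit $\lambda$, let $\tree{S}\in H$ be the tree from $P$ to $\bar M_\infty$ and $\tree{X}=X(\tree{S},\tree{V}\restriction\lambda+1)$. Then $\tree{X}\restriction\nu$ is an initial segment of $\bar{\tree{T}}_P=X(\tree{S},\tree{V})$, the collapse of the $\Sigma^+$-tree from $P$ to $N$, which lies in $H$. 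So the branch $b=\Sigma(\tree{X}\restriction\nu)$ is in $H$.
\item By \cref{prop - branch uniqueness}, $\Sigma(\tree{V}\restriction\lambda)$ is the unique $c$ with $(\tree{X}\restriction\nu)^\frown b\moq X(\tree{S},(\tree{V}\restriction\lambda)^\frown c)$. $\Sigma^1_1$-absoluteness in $H^{\textnormal{Col}(\omega,\lambda)}$ together with this uniqueness puts $c$ into $H$.
\end{itemize}
Very strong hull condensation then gives that $H$ satisfies ``$\tree{V}$ is by $\Sigma^+_{\bar M_\infty}$''. Elementarity of $\pi$ then yields the required tree from $M_\infty$ to $N$. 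Existence comes for free this way, and you never need to argue that the $N$-side is trivial.

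Note also that the paper's proof addresses only the minimality clause. It \emph{uses} the first clause, to get $\tree{S}\in H$ by elementarity. The branch trick above does not obviously repair your Step 1, since it needs a tree already in $H$ that records the relevant branches.
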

\begin{proof}
   Fix $N$ a $\Sigma^+_Q$-iterate of all $Q\in D$. Let $\tree{T}_Q$ be the unique non-dropping normal tree on $Q$ by $\Sigma^+_Q$ with last model $N$. By passing to $L(\mathbb{R}, \Sigma, Q\mapsto \tree{T}_Q)$, we may assume $\textnormal{DC}$. 
   
   Let $Y$ be a countable elementary substructure of a sufficiently large $V_\gamma$ such that $P$, $\Sigma$, $D$, and $N$ are all in $Y$. Let $\pi: H\to Y$ be the inverse of the transitive collapse. Let $\bar{\Sigma}=\pi^{-1}(\Sigma)$, $\bar{D}=\pi^{-1}(D)$, $\bar{N}=\pi^{-1}(N)$. 
    Then $\bar{\Sigma}=\Sigma\cap \textnormal{HC}^H$ by strong hull condensation and $\bar{D}=D\cap \textnormal{HC}^H$, so $\bar{D}$ is a countable directed set of non-dropping iterates of $(P,\Sigma)$. Also, $\bar{N}$ is a common non-dropping $(\Sigma^+_Q)^H$-iterate of all $Q\in \bar{D}$. For $Q\in \bar{D}$, let $\bar{\tree{T}}_Q\in H$ be the unique normal tree on $Q$ by $(\Sigma^+_Q)^H$ witnessing that $\bar{N}$ is a non-dropping iterate of $Q$. Then $\pi(\bar{\tree{T}}_Q)=\tree{T}_Q$, by the elementarity of $\pi$. 
So $\bar{\tree{T}}_Q$ is a countable weak hull of $\tree{T}_Q$, which implies that $\bar{\tree{T}}_Q$ is by $\Sigma_Q$, since $\tree{T}_Q$ is by $\Sigma^+_Q$. Therefore  $\bar{N}$ is a non-dropping $\Sigma_Q$ iterate of every $Q\in \bar{D}$. 

Let $\bar{M}_\infty$ denote the direct limit of $\bar{D}$. Let $\tree{S}$ be the countable non-dropping tree on $(P,\Sigma)$ with last model $\bar{M}_\infty$. Note that $\tree{S}\in H$ since by elementarity $H$ satisfies that $\bar{M}_\infty$ is a $\Sigma^+$-iterate of $P$. By \cref{thm - least common iterate}, there is a countable non-dropping tree $\tree{V}$ on $(\bar{M}_\infty, \Sigma_{\bar{M}_\infty})$ with last model $\bar{N}$. We claim that $\tree{V}\in H$. We'll show by induction that every initial segment of $\tree{V}$ is in $H$. Since $\tree{V}$ arises from the comparison of $\bar{M}_\infty$ and $\bar{N}$ by least extender disagreement and these models are in $H$, we just need to show that if $\lambda$ is a limit ordinal and $\tree{V}\restriction \lambda\in H$, then the branch $\Sigma_{\bar{M}_\infty}(\tree{V}\restriction\lambda)$ is in $H$. (The issue here is that $\lambda$ may not be countable in $H$ and we cannot assume that $H$ is closed under $\Sigma$. If $H$ were closed under $\Sigma$ we could conclude that $\Sigma^+$ is defined on all trees in $V_\gamma$, which will not be true in the case of interest.) 
 
 Let $\tree{X}=X(\tree{S},\tree{V}\restriction\lambda+1)$  and let $\nu$ be the supremum of the stages where exit extenders of $\tree{V}\restriction\lambda$ are used in $\tree{X}$. That is, for $\xi<\lambda$, let $\alpha_\xi+1<\lh(\tree{X})$ be such that $E_{\alpha_\xi}^{\tree{X}}=E_\xi^\tree{V}$ and $\nu=\sup_{\xi<\lambda} \alpha_\xi$. Since $X(\tree{S},\tree{V})=\bar{\tree{T}}_P$ and $\tree{X}\restriction\nu=X(\tree{S},\tree{V})\restriction\nu$,  $\tree{X}\restriction\nu$ is an initial segment of $\bar{\tree{T}}_P$. Let $b=\Sigma(\tree{X}\restriction\nu)$. By \cref{prop - branch uniqueness}, $\Sigma(\tree{V}\restriction\lambda)$ is the unique cofinal branch $c$ of $\tree{V}\restriction \lambda$ such that $(\tree{X}\restriction\nu)^\frown b\moq X(\tree{S},(\tree{V}\restriction\lambda)^\frown c)$. Now by an absoluteness argument, it follows from this characterization that $\Sigma(\tree{V}\restriction\lambda)$ is in $H$. Namely, let $G\subseteq \textnormal{Col}(\omega, \lambda)$ be an $H$-generic filter. In $H[G]$ by $\Sigma^1_1$-absoluteness there is a cofinal branch $c$ of $\tree{V}\restriction \lambda$ such that $(\tree{X}\restriction\nu)^\frown b\moq X(\tree{S},(\tree{V}\restriction\lambda)^\frown c)$. By uniqueness $c=\Sigma(\tree{V}\restriction \lambda)$. Since $\Sigma(\tree{V}\restriction \lambda)$ belongs to every $\textnormal{Col}(\omega, \lambda)$-generic extension of $H$, $\Sigma(\tree{V}\restriction \lambda)$ belongs to $H$, as desired. 
 
 This proves that $\tree{V}$ is in $H$. Therefore, by very strong hull condensation for $\Sigma_{\bar{M}_\infty}$ and full normalization for $\Sigma$, $H$ satisfies that $\tree{V}$ is by $\Sigma^+_{\bar{M}_\infty}$. So in $H$, $\bar{N}$ is a non-dropping $\Sigma^+_{\bar{M}_\infty}$-iterate of $\bar{M}_\infty$, and so by elementarity, $N$ is a $\Sigma^+_{M_\infty}$-iterate of $M_\infty$.
\end{proof}

Next we state a refinement of \cref{thm - pc generators}.

\begin{thm}[{Steel, \cite{mousepairs}}]\label{thm - pc generators 2}
    Assume $\AD_\mathbb{R}$ and  \textnormal{HPC}. Then there is a sequence $\langle (H_\alpha,\Sigma_\alpha)\mid\alpha<\eta\rangle$ of least branch hod pairs such that for every $\alpha<\eta$,
    \begin{enumerate}
    \item  the universe of $H_\alpha$ is $\HOD\cap V_{o(H_\alpha)}$ and $o(H_\alpha)$ is strongly inaccessible in $\HOD$ and closed under ultrapowers in $V$,
        \item\label{item - pc generators}  there is a countable least branch hod pair $(P, \Sigma)$ with scope $\textnormal{HC}$ such $(H_\alpha, \Sigma_\alpha)$ is an iterate of $(P, \Sigma^+\restriction V_\Theta)$
        \item for every $\alpha<\beta<\eta$, $(H_\alpha,\Sigma_\alpha)\mo^* (H_\beta, \Sigma_\beta)$,
        %define cutpoint initial segment
    \end{enumerate}
Finally, letting $H$ be the least branch premouse $\bigcup_{\alpha<\eta} H_\alpha$, we have $o(H)=\Theta$ and $L[H]=\HOD$.
\end{thm}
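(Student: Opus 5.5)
As literally stated, this is a formal consequence of \cref{thm - pc generators}, and that is the proof I will give. Every clause here appears verbatim in \cref{thm - pc generators}: that the universe of $H_\alpha$ is $\HOD\cap V_{o(H_\alpha)}$, that $o(H_\alpha)$ is inaccessible in $\HOD$ and closed under ultrapowers, that $(H_\alpha,\Sigma_\alpha)\mo^*(H_\beta,\Sigma_\beta)$ for $\alpha<\beta<\eta$, that $o(H)=\Theta$, and that $L[H]=\HOD$. The only change is in item~\ref{item - pc generators}. There the earlier theorem gives a countable least branch hod pair $(P,\Sigma)$ with scope $\textnormal{HC}$ such that $(H_\alpha,\Sigma_\alpha)$ is an iterate of $(P,\Sigma^+\restriction V_\Theta)$ and, in addition, $H_\alpha=M_\infty(P,\Sigma)$. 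The present item simply omits that extra identification. So I take the sequence $\langle (H_\alpha,\Sigma_\alpha)\mid\alpha<\eta\rangle$ supplied by \cref{thm - pc generators} and check each clause against it.

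Since the statement is billed as a refinement, the text that follows presumably uses the direct-limit presentation $H_\alpha=M_\infty(P,\Sigma)$ in a more detailed form. I would therefore also record how that form is obtained, in case it is needed for \cref{thm - main v2}. The plan runs in two steps.

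First, in Steel's analysis, $H_\alpha$ is the direct limit of the directed system $D$ of countable non-dropping $\Sigma$-iterates $Q$ of $P$ under the iteration maps. These maps commute by positionality (\cref{lemma - commutativity}), and directedness holds by \cref{lemma - directedness}.

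Second, \cref{cor - iteration criterion 2} shows that this direct limit is a $\Sigma^+_Q$-iterate of every $Q\in D$, and is least among all common $\Sigma^+$-iterates. Combined with \cref{lemma - strategy extension}, this recovers that $(H_\alpha,\Sigma_\alpha)$ is a $\Sigma^+\restriction V_\Theta$-iterate of $P$.

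The only genuine content, namely producing the pairs at all with the cutpoint coherence and $L[H]=\HOD$, is quoted from \cite{mousepairs} and not reproved. I expect no obstacle beyond matching notation between the two statements.
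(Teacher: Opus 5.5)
Your argument is correct and matches the paper. The paper proves this statement only by citing Steel \cite{mousepairs}, and as printed it is a literal weakening of \cref{thm - pc generators}: it differs only in dropping the clause $H_\alpha=M_\infty(P,\Sigma)$, which the proof of \cref{thm - main v2} in fact takes from \cref{thm - pc generators}.

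Your supplementary two-step sketch is unnecessary, and it does not actually produce the direct-limit presentation. Step one simply quotes that presentation, and step two runs the other way, deriving the iterate clause from it via \cref{cor - iteration criterion 2}.
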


As in the remarks following \cref{thm - pc generators}, we let $\Sigma_\HOD$ denote the partial iteration strategy for $\HOD$ coming from \cref{thm - pc generators 2}.

Our generalization of \cref{thm - main v1}  is the following.

\begin{thm}\label{thm - main v2}
Assume $\AD_\mathbb{R}+V=L(P(\mathbb{R}))+\textnormal{HPC}$. Let $U$ be an ultrafilter on a set $X\leq^* \mathbb{R}$ such that $\Ult(\Ord, U)$ is well-founded. 
Then there is a normal non-dropping iteration tree $\tree{T}$ of limit length on $\HOD$ by $\Sigma_\HOD$ with a unique cofinal branch $b$ such that $M_b^\tree{T}=i_U(\HOD)$ and $i_b^\tree{T}=i_U\restriction \HOD$.
\end{thm}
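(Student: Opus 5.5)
Follow the proof of \cref{thm - main v1}, replacing the appeal to \cref{thm - uniqueness} with the direct limit analysis of \cref{thm - least common iterate} and \cref{cor - iteration criterion 2}. Fix $\alpha<\eta$ and a countable pair $(P,\Sigma)$ as in \cref{thm - pc generators 2}. Let $D$ be the directed system of countable non-dropping $\Sigma$-iterates of $P$ whose direct limit is $H_\alpha$. I expect that, in the HPC analysis, $D$ can be taken to be the set of all countable non-dropping iterates $Q$ of $P$ with $H_\alpha$ a $\Sigma^+_Q$-iterate of $Q$, with maps the iteration maps. By \cref{cor - iteration criterion 2}, $H_\alpha$ is the least common $\Sigma^+$-iterate of members of $D$.

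\textbf{Step 1: $i_U(H_\alpha)$ is an iterate of $H_\alpha$.} By \cref{lemma - strategy preservation}, for each $Q\in D$ the tree $i_U(\tree{T}_Q)$ from $i_U(Q)=Q$ to $i_U(H_\alpha)$ is by $\Sigma^+_Q$, since $Q$ is countable and so fixed by $i_U$ (as $\Ult(\Ord,U)$ is well-founded, $i_U$ fixes reals). Hence $i_U(H_\alpha)$ is a common $\Sigma^+_Q$-iterate of every $Q\in D$. By the minimality clause of \cref{cor - iteration criterion 2}, there is a normal non-dropping tree $\tree{T}_\alpha$ by $(\Sigma_\alpha)$ from $H_\alpha$ to $i_U(H_\alpha)$. (Alternatively, use $i_U(H_\alpha)\subseteq H_\alpha$ and \cref{lemma - iterate criterion}; either route should work.)

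\textbf{Step 2: identify the iteration map.} This is where the argument departs from \cref{thm - main v1}. Let $j$ be the main branch embedding of $\tree{T}_\alpha$. Every $x\in H_\alpha$ has the form $\pi_{Q,\infty}(\bar x)$ for some $Q\in D$ and $\bar x\in Q$, where $\pi_{Q,\infty}=i^{\tree{T}_Q}$. By \cref{lemma - commutativity} applied to the two non-dropping routes from $Q$ to $i_U(H_\alpha)$, namely $\tree{T}_Q$ followed by $\tree{T}_\alpha$, and $i_U(\tree{T}_Q)$, we get $j\circ\pi_{Q,\infty}=i^{i_U(\tree{T}_Q)}=i_U(\pi_{Q,\infty})$. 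Since $\bar x$ is countable and hence fixed by $i_U$, we have $i_U(\pi_{Q,\infty})(\bar x)=i_U(\pi_{Q,\infty}(\bar x))=i_U(x)$. Therefore $j(x)=i_U(x)$, so $j=i_U\restriction H_\alpha$. The point to check is that $i_U$ fixes every $Q\in D$ and every element of $Q$. This needs $Q\in\textnormal{HC}$, and well-foundedness of $\Ult(\Ord,U)$ to ensure $i_U\restriction\omega_1$ is the identity. The latter holds because the ultrapower is countably complete on sets of size $\leq^*\mathbb R$ under $\AD$.

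\textbf{Step 3: glue.} Exactly as in \cref{thm - main v1}, the trees $\tree{T}_\alpha$ cohere by \cref{rmk - uniqueness of normal trees} and $\mo^*$, giving a tree $\tree{T}=\bigcup_\alpha\tree{T}_\alpha$ on $H$ by $\Sigma_\HOD$, of limit length (possibly shorter than $\Theta$ now), with a unique cofinal branch $b$ and $i^\tree{T}_b=i_U\restriction H$. Finally, \cref{lemma - generators} gives $i_U\restriction\HOD=i_E^\HOD$ for an extender $E$ of support $\Theta$ derived from $i_U\restriction H$. Hence, viewing $\tree{T}$ as a tree on $\HOD=L[H]$, we get $M_b^\tree{T}=\Ult(\HOD,E)=i_U(\HOD)$ and $i^\tree{T}_b=i_U\restriction\HOD$.

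I expect the main obstacle to be Step 1's appeal to minimality. It requires that $H_\alpha$ really is the direct limit of a directed system $D$ of \emph{countable} iterates, each fixed pointwise by $i_U$, so that \cref{cor - iteration criterion 2} applies. It also requires ensuring that the tail strategies agree, i.e.\ that $(\Sigma^+)_{i_U(H_\alpha)}$ computed via different $Q$ coincides with the tail of $\Sigma_\alpha$. Positionality (\cref{lemma - commutativity}) handles the latter for the embeddings.
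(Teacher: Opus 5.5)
Your proposal is correct and follows the paper's proof essentially step for step. The paper also obtains $\mathcal{T}_\alpha$ from \cref{lemma - strategy preservation} together with the minimality clause of \cref{cor - iteration criterion 2}. It identifies the branch embedding $j$ by comparing $j\circ i^{\mathcal{S}_Q}_{0,\infty}$ with $i^{i_U(\mathcal{S}_Q)}_{0,\infty}=i_U\circ i^{\mathcal{S}_Q}_{0,\infty}$ over all countable iterates $Q$ and using $H_\alpha=\bigcup_Q i^{\mathcal{S}_Q}_{0,\infty}[Q]$, then glues exactly as in \cref{thm - main v1}. The only detail the paper adds is that it invokes positionality (\cref{lemma - commutativity}) together with a Skolem hull argument, since the stacks involved are by $\Sigma^+$ and uncountable rather than by $\Sigma$ itself.
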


\begin{proof}
     Fix $\langle (H_\alpha, \Sigma_\alpha)\mid \alpha<\eta\rangle$ as in \cref{thm - pc generators 2}. Fix $\alpha<\eta$. Also fix a countable least branch hod pair $(P,\Sigma)$ with scope $\textnormal{HC}$ such that $(H_\alpha, \Sigma_\alpha)$ is an iterate of $(P, \Sigma^+\restriction V_\Theta)$ and $H_\alpha=M_\infty(P,\Sigma)$.
     For each countable non-dropping iterate $Q$ of $(P,\Sigma)$, let $\tree{S}_Q$ the unique non-dropping tree on $(Q, \Sigma_Q^+\restriction V_\Theta)$ with last model $H_\alpha$. By \cref{lemma - strategy preservation}, $i_U(\tree{S}_Q)$ is by $\Sigma^+_Q$ so $i_U(H_\alpha)$ is a non-dropping $\Sigma^+_Q$-iterate of $Q$.  By \cref{cor - iteration criterion 2}, $i_U(H_\alpha)$ is a non-dropping $\Sigma_\alpha^+$-iterate of $H_\alpha$. Let $\tree{T}_\alpha$ be the unique normal tree witnessing this. 
     
     We claim that the main branch embedding $j$ of $\tree{T}_\alpha$ is equal to $i_U\restriction H_\alpha$. By \cref{lemma - commutativity} and a Skolem hull argument, for any countable non-dropping iterate $Q$ of $(P,\Sigma)$, $j\circ i_{0,\infty}^{\tree{S}_Q}= i_{0,\infty}^{i_U(\tree{S}_Q)}$. Note that $i_U\circ i_{0,\infty}^{\tree{S}_Q}= i_{0,\infty}^{i_U(\tree{S}_Q)}$ and therefore $j\restriction i_{0,\infty}^{\tree{S}_Q}[Q]= i_U\restriction i_{0,\infty}^{\tree{S}_Q}[Q]$. By the definition of the direct limit, $H_\alpha=\bigcup_Q i_{0,\infty}^{\tree{S}_Q}[Q]$, and it follows that $j=i_U\restriction H_\alpha$, as claimed. 

 For $\alpha<\beta<\eta$, since $(H_\alpha, \Sigma_\alpha)\mo^* (H_\beta, \Sigma_\beta)$ we can view $\tree{T}_\alpha$ as a non-dropping normal tree on $(H_\beta, \Sigma_\beta)$ with the same exit extenders and tree order.
 By the uniqueness of normal trees (\cref{rmk - uniqueness of normal trees}), since $(H_\alpha, \Sigma_\alpha)\mo (H_\beta, \Sigma_\beta)$ and $i_U(H_\alpha)\mo i_U(H_\beta)$, $\tree{T}_\beta$ is an extension of $\tree{T}_\alpha$, viewed in this way. Let $(H,\Lambda)=\bigcup_{\alpha<\eta} (H_\alpha, \Sigma_\alpha)$. 
 Let $\tree{T}=\bigcup \tree{T}_\alpha$, viewed as a tree on $H$ by $\Lambda$. Note that $\tree{T}$ has limit length and so does not have a last model. However, since $H_\alpha\mo^*H_\beta$ for $\alpha<\beta<\eta$ (\cref{defn - cutpoint}), $\tree{T}$ is essentially a stack of normal trees $\langle \tree{U}_\alpha\mid \alpha<\eta\rangle$ on $H$: $\tree{U}_\alpha$ consists of the exit extenders of $\tree{T}$ with length between $\sup _{\beta<\alpha}o(H_\beta)$ and $o(H_\alpha)$. It follows that $\tree{T}$ has a unique cofinal branch $b$, obtained by concatenating the main branches in the stack $\langle \tree{U}_\alpha\mid \alpha<\eta\rangle$. 
 Moreover, $M_b^{\tree{T}}=\bigcup_{\alpha<\eta} M_\infty^{\tree{T}_\alpha}=\bigcup_{\alpha<\eta}i_U(H_\alpha)$. Also, $i_b^{\tree{T}}\restriction H_\alpha = i_{0,\infty}^{\tree{T}_\alpha} =i_U\restriction H_\alpha$. Therefore $i_b^{\tree{T}}=i_U\restriction H$. Here we just mean that $i_b^\tree{T}(x)= i_U(x)$ for all $x\in H$; $i_U(H)$ may be different from $M_b^\tree{T}$, in general, since it is possible that $i_U(\Theta)>\sup i_U[\Theta]$ when $\Theta$ is singular. (In any case, $M_b^\tree{T}=i_U(H)|\sup i_U[\Theta]$.)

Let $E$ be the extender of $i_b^{\tree{T}}$. Since $i_b^{\tree{T}}=i_U\restriction H$, $E$ is equal to the extender of length $\sup i_U[\Theta]$ derived from $i_U\restriction \HOD$. From now on let us consider $\tree{T}$ as a tree on $\HOD=L[H]$. Note that $M_b^\tree{T}=\Ult(\HOD, E)$ which is equal to $i_U(\HOD)$ by \cref{lemma - generators}. Finally, $i_b^\tree{T}=i_E^\HOD=i_U\restriction \HOD$, again by \cref{lemma - generators}.
\end{proof}

\subsection{$V = L(\mathbb{R})$}\label{section - L(R)}

In this section we'll prove our main theorem in $L(\mathbb{R})$ under determinacy using arguments similar to those of the previous section. While the full $\HOD$ of $L(\mathbb{R})$ can be seen to be of the form $L[M_\infty(P,\Sigma)]$ for some mouse pair $(P,\Sigma)$ (the rigidly layered version of $M_\omega$ with its strategy, for example), there is no such mouse pair which is actually a member of $L(\mathbb{R})$. To get around this, we will use a reflection argument. 

We will need the following result which follows from the proof of \cref{thm - main v2}.
\begin{lma}\label{lemma - ultrapowers of M-infinity}
    Assume $\AD^++V=L(P(\mathbb{R}))$. Let $U$ be an ultrafilter on an ordinal. Let $(P,\Sigma)$ be a mouse pair with scope $\textnormal{HC}$ and let $M_\infty= M_\infty(P,\Sigma)$. Then there is a unique normal tree $\tree{V}$ on $M_\infty$ by $\Sigma^+_{M_\infty}$ with last model $i_U(M_\infty)$ and $i^\tree{V}_{0,\infty}= i_U\restriction M_\infty$.
\end{lma}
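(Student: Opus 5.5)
We follow the first half of the proof of \cref{thm - main v2}, with $H_\alpha$ replaced by $M_\infty=M_\infty(P,\Sigma)$. Here $M_\infty$ is the direct limit of the countable non-dropping $\Sigma$-iterates of $P$. For each such iterate $Q$, let $\tree{S}_Q$ be the unique non-dropping normal tree on $Q$ by $\Sigma^+_Q$ with last model $M_\infty$; its main branch embedding $i^{\tree{S}_Q}_{0,\infty}$ is the direct limit map $\pi_{Q,\infty}$. That $M_\infty$ is such a $\Sigma^+_Q$-iterate is the first clause of \cref{cor - iteration criterion 2}, applied to the directed set $D$ of all countable non-dropping iterates. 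This needs $\textnormal{DC}_\mathbb{R}$, which holds under $\AD^++V=L(P(\mathbb{R}))$.

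\textbf{Step 1: $i_U(M_\infty)$ is an iterate of every $Q$.} An ultrafilter on an ordinal is countably complete under $\AD$, so $\Ult(\Ord,U)$ is well-founded. Lemma \ref{lemma - strategy preservation} then applies to $\tree{S}_Q$: the tree $i_U(\tree{S}_Q)$ is by $\Sigma^+_{i_U(Q)}$. Since $Q$ is countable (coded by a real) and $U$ is countably complete, $i_U(Q)=Q$ and $i_U(\Sigma_Q)=\Sigma_Q$; a weak hull argument as in that lemma gives this. Hence $i_U(M_\infty)$, the last model of $i_U(\tree{S}_Q)$, is a non-dropping $\Sigma^+_Q$-iterate of $Q$ for every $Q\in D$.

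\textbf{Step 2: produce $\tree{V}$.} By the minimality clause of \cref{cor - iteration criterion 2}, $i_U(M_\infty)$ is a non-dropping $\Sigma^+_{M_\infty}$-iterate of $M_\infty$. Let $\tree{V}$ be the normal tree witnessing this. It is unique: by the reflection in \cref{rmk - uniqueness of normal trees} and \cref{lemma - directedness}, it is the least-extender-disagreement comparison of $M_\infty$ with $i_U(M_\infty)$. Uniqueness should be checked on countable hulls and then transferred up, exactly as in the proof of \cref{cor - iteration criterion 2}.

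\textbf{Step 3: identify the iteration map.} Let $j=i^{\tree{V}}_{0,\infty}$. Both $\langle\tree{S}_Q,\tree{V}\rangle$ and $i_U(\tree{S}_Q)$ are non-dropping stacks on $Q$ with common last model $i_U(M_\infty)$. Positionality, \cref{lemma - commutativity}, would then give $j\circ i^{\tree{S}_Q}_{0,\infty}=i^{i_U(\tree{S}_Q)}_{0,\infty}$. However, that lemma is stated for the countable strategy $\Sigma$, so it must be applied to countable hulls and reflected, the ``Skolem hull argument'' of \cref{thm - main v2}. Concretely, take a countable $Y\prec V_\gamma$ containing everything, collapse it, apply \cref{lemma - commutativity} to the collapsed stacks, and pull the equality back up by elementarity. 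By elementarity of $i_U$ we also have $i_U\circ i^{\tree{S}_Q}_{0,\infty}=i^{i_U(\tree{S}_Q)}_{0,\infty}$, using $i_U(Q)=Q$ and $i_U\restriction Q=\mathrm{id}$. Hence $j$ and $i_U$ agree on $\pi_{Q,\infty}[Q]$ for every $Q$. Since $M_\infty=\bigcup_Q\pi_{Q,\infty}[Q]$, we get $j=i_U\restriction M_\infty$.

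The main obstacle is this reflection of positionality to trees by $\Sigma^+$ of uncountable length. Trees by $\Sigma^+$ are not literally by $\Sigma$, so we must check that the countable hulls of the three stacks are by $\Sigma$. This follows from the definition of $\Sigma^+$ via weak hulls, together with very strong hull condensation. We must also check that the collapse correctly computes the relevant main branch embeddings; this is routine once the hull is taken to contain all the trees involved.
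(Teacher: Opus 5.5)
Your proposal is correct and matches the paper's proof. The paper proves this lemma by running the first half of the proof of \cref{thm - main v2} with $M_\infty(P,\Sigma)$ in place of $H_\alpha$: \cref{lemma - strategy preservation} shows $i_U(M_\infty)$ is a $\Sigma^+_Q$-iterate of every countable iterate $Q$, \cref{cor - iteration criterion 2} then yields $\tree{V}$, and positionality together with a Skolem hull argument shows $i^{\tree{V}}_{0,\infty}$ agrees with $i_U$ on each $\pi_{Q,\infty}[Q]$. Your extra details are exactly what the paper leaves implicit: that $M_\infty$ is a $\Sigma^+_Q$-iterate by the first clause of the corollary, that $i_U\restriction Q=\mathrm{id}$ is what gives $i_U\circ i^{\tree{S}_Q}_{0,\infty}=i^{i_U(\tree{S}_Q)}_{0,\infty}$, and how positionality reflects through countable hulls.
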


(We will not use the full generality of \cref{lemma - ultrapowers of M-infinity} and for the $(P,\Sigma)$ we need to consider, we could instead use the argument from \cref{thm - main v1}.)

Here is our main theorem for $L(\mathbb{R})$.

\begin{thm}\label{thm - main L(R)}
    Assume $\AD+V=L(\mathbb{R})$. Let $U$ be an ultrafilter on an ordinal. Then there is a normal non-dropping ordinal definable iteration tree $\tree{T}$ on $\HOD$ of length $\Theta$ based on $\HOD|\Theta$ by the short tree strategy for $\HOD$ with a unique cofinal branch $b$ such that $M_b^\tree{T}=i_U(\HOD)$ and $i_b^\tree{T}=i_U\restriction \HOD$.
\end{thm}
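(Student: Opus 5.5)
Since no mouse pair generating $\HOD$ lies in $L(\mathbb{R})$, the plan is to work locally, as in the reflection argument behind \cref{thm - hod L(R)}.

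\textbf{Step 1: local hod pairs.} For each $\alpha<\Theta$ we take a $\Sigma^2_1$-reflection point, namely a level $L_{\kappa}(\mathbb{R})$ with $\kappa$ least such that $L_\kappa(\mathbb{R})\prec_1 L(\mathbb{R})$, or a suitable cofinal family of such levels. We use the Steel--Woodin analysis there: there is a countable mouse pair $(P,\Sigma)$ with scope $\textnormal{HC}$ that lies in $L(\mathbb{R})$. For instance, $P$ is a suitable premouse, or an $\omega_1$-iterable initial segment of $M_\omega$ below its bottom Woodin cardinal, together with the restriction of its strategy to short trees. It has the property that $M_\infty(P,\Sigma)$ has universe $\HOD|\delta$ for a cutpoint $\delta$ of $H$. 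These $\delta$ are cofinal in $\Theta$, and the resulting $M_\infty$'s form a $\mo^*$-increasing chain whose union is $H|\Theta$. We also need each such $\delta$ to be closed under $i_U$, so that $i_U(\HOD|\delta)\subseteq \HOD|\delta$. This holds because $\delta$ is a cardinal of $V$ below $\Theta$ and $U$ lives on an ordinal; we may need to choose the $\delta$'s among cardinals closed under ultrapowers.

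\textbf{Step 2: the local trees.} For each such pair we apply \cref{lemma - ultrapowers of M-infinity} to $M_\infty=M_\infty(P,\Sigma)$. This gives a unique normal tree $\tree{V}_\delta$ on $M_\infty$ by $\Sigma^+_{M_\infty}$ with last model $i_U(M_\infty)$ and with $i^{\tree{V}_\delta}_{0,\infty}=i_U\restriction M_\infty$. We then check that $\Sigma^+_{M_\infty}$ agrees on these trees with the short tree strategy for $\HOD$, i.e.\ that the trees are short. This should follow because every extender used has length below $\delta$ and the relevant $\mathcal{Q}$-structures exist in $\HOD$. Agreement of the strategies then comes from the uniqueness of $\mathcal{Q}$-structure-guided branches.

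\textbf{Step 3: coherence and the union.} As in the proof of \cref{thm - main v1}, for $\delta<\delta'$ the uniqueness of normal trees (\cref{rmk - uniqueness of normal trees}) gives that $\tree{V}_{\delta'}$ extends $\tree{V}_\delta$, using $\mo^*$ and $i_U(\HOD|\delta)\mo i_U(\HOD|\delta')$. We set $\tree{T}=\bigcup_\delta\tree{V}_\delta$. This is a tree of length $\Theta$ based on $\HOD|\Theta$, and it is ordinal definable because it is determined by $U$, which is OD by Kunen, and by the comparison. It is essentially a stack along the cutpoints, so it has a unique cofinal branch $b$. Along this branch $i_b^\tree{T}\restriction(\HOD|\Theta)=i_U\restriction(\HOD|\Theta)$ and $M_b^\tree{T}=i_U(\HOD|\Theta)$, where we use $\sup i_U[\Theta]=\Theta$. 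We then regard $\tree{T}$ as a tree on all of $\HOD$, which is legitimate because $\HOD=L[H,\Lambda]$. Next, \cref{lemma - generators} gives that $i_U\restriction\HOD$ is the ultrapower by the length-$\Theta$ extender derived from it, and this extender is the branch extender of $b$. The remaining point is $i_U(\HOD)=\Ult(\HOD,E)$, with $i_U(\Lambda)$ equal to the strategy part computed in $\Ult(\HOD,E)$. This is handled by elementarity, since $\HOD$ is the ultrapower of itself generated by $i_U[\HOD]\cup\Theta$.

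\textbf{Main obstacle.} The hard step is Step 1 together with the strategy identification in Step 2. We need cofinally many cutpoints $\delta<\Theta$ such that $\HOD|\delta$ is literally $M_\infty$ of a pair with scope $\textnormal{HC}$ that lies in $L(\mathbb{R})$ and that satisfies our definition of mouse pair, including full normalization. We also need $\Sigma^+$ on trees of length below $\Theta$ to coincide with the short tree strategy. My first attempt would be to extract the $P$'s from the reflection argument used for \cref{thm - hod L(R)}: countable $\Sigma^2_1$-correct suitable premice, with the strategy guided by $\mathcal{Q}$-structures. I would then verify the hypotheses of \cref{lemma - ultrapowers of M-infinity} for them. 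If full normalization for the pure-extender $\mathcal{Q}$-guided strategy is not available off the shelf, I would instead run the argument of \cref{thm - main v1} directly, replacing \cref{thm - uniqueness} applied in $\HOD|\delta$ (with $\delta$ inaccessible in $\HOD$) by the hull argument.
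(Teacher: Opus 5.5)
\textbf{There is a genuine gap: Step 1 asks for something that does not exist in $L(\mathbb{R})$, and Step 3 depends on it.}

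You are transplanting the proof of \cref{thm - main v1}. That proof glues local trees along a $\mo^*$-increasing chain of strong cutpoint initial segments of $\HOD$ cofinal in $\Theta$. In $L(\mathbb{R})$ no such chain exists. $\Theta$ is Woodin in $\HOD$, so $\HOD$ has a cardinal $\kappa<\Theta$ that is ${<}\Theta$-strong, witnessed by extenders on its sequence. Then every ordinal in $(\kappa,\Theta)$ is overlapped by an extender of $\HOD$, so the strong cutpoints of $\HOD$ are bounded below $\Theta$. This is exactly where $L(\mathbb{R})$ differs from the $\AD_\mathbb{R}+\textnormal{HPC}$ setting, where the $o(H_\alpha)$ are cofinal strong cutpoints.

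Without $\mo^*$, Step 3 loses both things it uses:
\begin{enumerate}
\item that $\tree{V}_\delta$ can be regarded as a tree on $\HOD|\delta'$ with the same extenders and tree order, forming an initial segment of $\tree{V}_{\delta'}$;
\item that $\tree{T}$ decomposes as a stack, which is how you get the cofinal branch.
\end{enumerate}
Your fallback (rerunning the \cref{thm - main v1} argument with a hull argument in place of \cref{thm - uniqueness}) needs the same chain, so it does not help.

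The mouse pairs are a second problem. Suitable premice carry only short-tree strategies. These are not total on countable trees, so they are not mouse pairs in the sense of \cref{defn - mouse pair}, and \cref{lemma - ultrapowers of M-infinity} does not apply to them. The natural pair generating $\HOD|\Theta$, the bottom of $M_\omega$ with its strategy, is not in $L(\mathbb{R})$, as the paper notes.

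\textbf{The missing idea is the paper's reflection to a minimal counterexample.} The short tree strategy is definable, so the theorem is a single sentence $\psi_0$.
\begin{enumerate}
\item Take $\gamma$ least with $L_\gamma(\mathbb{R})\models\mathrm{ZF}^-+$ ``$P(P(\mathbb{R}))$ exists'' $+\neg\psi_0$, and let $\theta=\Theta^{L_\gamma(\mathbb{R})}$.
\item The \S7 argument of Steel--Woodin yields $(P,\Sigma)$ with $\omega$ Woodin cardinals. All of $V_\theta\cap\HOD^{L_\gamma(\mathbb{R})}$ is then one direct limit $M_\infty(P^-,\Sigma_{P_0})$, where $(P_0,\Sigma_{P_0})$ is a pair with scope $\textnormal{HC}$ to which \cref{lemma - ultrapowers of M-infinity} applies. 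Moreover, the tree $\tree{S}$ from $P_0$ to $M_\infty$ satisfies $\tree{S}\restriction\theta\in L_\gamma(\mathbb{R})$.
\item For a counterexample $U\in L_\gamma(\mathbb{R})$, the Coding Lemma and countable completeness give $i_U^{L_\gamma(\mathbb{R})}=i_U\restriction L_\gamma(\mathbb{R})$.
\item One application of \cref{lemma - ultrapowers of M-infinity} produces a single tree $\tree{V}$ on the whole of $\HOD^{L_\gamma(\mathbb{R})}|\theta$, so no gluing is needed.
\item \cref{prop - branch uniqueness} plus $\Sigma^1_1$-absoluteness, using $i_U(\tree{S}\restriction\theta)\in L_\gamma(\mathbb{R})$, puts every proper initial segment of $\tree{V}$ into $L_\gamma(\mathbb{R})$. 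Its main branch is definable from $i_U^{L_\gamma(\mathbb{R})}$.
\item Combined with \cref{lemma - generators} applied in $L_\gamma(\mathbb{R})$, this contradicts the choice of $U$.
\end{enumerate}
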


\begin{proof}
    Because the short tree strategy of $\HOD$ is definable (in $L(\mathbb{R})$), the theorem statement can be expressed by a first-order sentence $\psi_0$ in the language of set theory. We would like to show $L(\mathbb{R})\vDash \psi_0$. We will do this via a reflection argument following \cite[\S 7]{hodas}.

    Let $\gamma$ be least such that $L_\gamma(\mathbb{R})$ satisfies \[\textnormal{ZF}^- +``P(P(\mathbb{R})) \textnormal{ exists}" + \neg \psi_0.\] Let $\theta=\Theta^{L_\gamma(\mathbb{R})}$. The argument of \cite[\S 7]{hodas} produces a pure extender mouse pair $(P,\Sigma)$ with the following properties.\footnote{In \cite[\S 7]{hodas}, the analogous mouse pair is called $(M_0, \Sigma_0)$.} First, $P$ has $\omega$ Woodin cardinals. Second, the following hold, where $\delta_0$ is the least Woodin cardinal of $P$, $P_0=P|\delta_0$, and $M_\infty=M_\infty(P^-,\Sigma_{P_0})$:
    %let $\lambda_\infty$ be the supremum of the Woodin cardinals of $M_\infty$, and let $F$ be the set of normal trees of limit length  in $V_{\lambda_\infty}\cap M_\infty$ based on $M_\infty|\theta$.\footnote{The ordinal $\theta$ is the least Woodin cardinal of $M_\infty$.} Let $\Sigma_\infty = \Sigma^+_{M_\infty}\restriction F$.
    \begin{enumerate}
        \item $V_\theta\cap\HOD^{L_\gamma(\mathbb{R})}=M_\infty$, and
       % \item $\HOD^{L_\gamma(\mathbb{R})} = L_\gamma(M_\infty, \Sigma_\infty)$,
        \item there is a unique normal tree $\tree{S}$ on $P_0$ of length $\theta+1$ by $\Sigma^+_{P_0}$ with $M_\infty= M_\theta^{\tree{S}}$, and $\tree{S}\restriction \theta\in L_\gamma(\mathbb{R})$. 
    \end{enumerate} 

    Let $U\in L_\gamma(\mathbb{R})$ be an $L_\gamma(\mathbb{R})$-ultrafilter on $\kappa<\theta$ witnessing the failure of $\psi_0$. The Coding Lemma implies that $P(\kappa)\subseteq L_\gamma(\mathbb{R})$, so $U$ is an ultrafilter in $L(\mathbb{R})$. Using the minimality of $\gamma$, we will show that $\Ult_0(L_\gamma(\mathbb{R}), U)=i_U(L_\gamma(\mathbb{R}))$ and $$i_U^{L_\gamma(\mathbb{R})}=i_U\restriction L_\gamma(\mathbb{R}).$$
    
    For every $n$, let $H_n= \textnormal{Hull}_{\Sigma_n}^{L_\gamma(\mathbb{R})}(\mathbb{R})$. By Replacement, every $H_n$ is a member of $L_\gamma(\mathbb{R})$ and is the surjective image of $\mathbb{R}$ in $L_\gamma(\mathbb{R})$. The Coding Lemma implies that every $\kappa$-sequence of sets of reals with Wadge rank bounded below $\theta$ belongs to $L_\gamma(\mathbb{R})$. It follows that every partial function from $\kappa$ into $H_n$ belongs to $L_\gamma(\mathbb{R})$. Fix a function $f:\kappa\to L_\gamma(\mathbb{R})$. We'll show that $f\restriction A\in L_\gamma(\mathbb{R})$ for some $A\in U$. Let $A_n=\{\alpha<\kappa\mid f(\alpha)\in H_n\}$. By the countable completeness of $U$, for some $n$, $A_n\in U$. Since $f\restriction A_n$ is a partial function from $\kappa$ into $H_n$, $f\restriction A_n\in L_\gamma(\mathbb{R})$, as desired. This proves our claim that $\Ult_0(L_\gamma(\mathbb{R}), U)=i_U(L_\gamma(\mathbb{R}))$ and $i_U^{L_\gamma(\mathbb{R})}=i_U\restriction L_\gamma(\mathbb{R})$.

    By \cref{lemma - ultrapowers of M-infinity}, there is a unique normal tree $\tree{V}$ on $M_\infty$ by $\Sigma^+_{M_\infty}$ with last model $i_U(M_\infty)$ and $$i^\tree{V}_{0,\infty}=i_U\restriction M_\infty.$$
Since $i_U(\tree{S}\restriction\theta)\in L_\gamma(\mathbb{R})$ and $i_U(\tree{S})=X(\tree{S}, \tree{V})$, we can use \cref{prop - branch uniqueness} to show that every proper initial segment of $\tree{V}$ is in $L_\gamma(\mathbb{R})$. More precisely, since $\tree{V}$ is the tree of the comparison of $M_\infty$ and $i_U(M_\infty)$, it suffices to show that for all limit ordinals $\lambda<\lh(\tree{V})$, $[0,\lambda)_\tree{V}$ is in $L_\gamma(\mathbb{R})$ and this follows from \cref{prop - branch uniqueness} by the $\Sigma^1_1$-absoluteness argument used in the proof of \cref{cor - iteration criterion 2}.

    Finally, since $i_{0,\infty}^\tree{V}=i_U\restriction M_\infty= i_U^{L_\gamma(\mathbb{R})}\restriction M_\infty$ is definable over $L_\gamma(\mathbb{R})$, the main branch of $\tree{V}$ is a member of $L_\gamma(\mathbb{R})$. Let $\tree{T}$ be the tree on $\HOD^{L_\gamma(\mathbb{R})}$ with the same extenders and tree-order as $\tree{V}$. By \cref{lemma - generators} applied in $L_\gamma(\mathbb{R})$, $i_{0,\infty}^\tree{T}=i^{L_\gamma(\mathbb{R})}_U\restriction \HOD^{L_\gamma(\mathbb{R})}$. This contradicts that $U$ witnessed the failure of $\psi_0$ in $L_\gamma(\mathbb{R})$.
\end{proof}

If there is a fully iterable $M_\omega^\#$, then we also have access to the external characterization of $\HOD^{L(\mathbb{R})}$. This requires a bit of notation. Let $\Sigma$ be the iteration strategy for $M_\omega$ coming from the unique iteration strategy for $M_\omega^\#$. Let $\delta_0$ be the least Woodin cardinal of $M_\omega$. Let $M_\infty$ be the direct limit of all non-dropping iterates of $M_\omega$ by $\Sigma$ via countable non-dropping stacks of normal trees based on $M_\omega|\delta_0$. Also let $\delta_\infty$ be the least Woodin cardinal of $M_\infty$ and $\lambda_\infty$ be the supremum of the Woodin cardinals of $M_\infty$. Finally, let $\Lambda$ be the restriction of $\Sigma_{M_\infty}$ to stacks of normal trees based on $M_\infty|\delta_\infty$ which are members of $M_\infty|\lambda_\infty$.

\begin{thm}[Steel-Woodin \cite{hodas}]\label{thm - hod L(R) v2}
Assume $M_\omega^\#$ exists and is fully iterable. Then $(V_\Theta\cap \HOD)^{L(\mathbb{R})}$ is the universe of $M_\infty|\delta_\infty$ and $\HOD^{L(\mathbb{R})}=L[M_\infty, \Lambda]$.    
\end{thm}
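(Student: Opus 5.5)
This is the Steel--Woodin theorem, quoted from \cite{hodas}, so I will only sketch the standard argument, recast in the present language. Work in $L(\mathbb{R})$, where $\AD$ holds because $M_\omega^\#$ is fully iterable. There are two inclusions to prove for $V_\Theta\cap\HOD$, and then the identification of all of $\HOD$.

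\emph{$M_\infty|\delta_\infty\subseteq \HOD$.} Let $\mathcal{F}$ be the directed system of countable non-dropping $\Sigma$-iterates of $M_\omega$ below $\delta_0$. The first step is to show that $\mathcal{F}$, restricted to the relevant bounded parts, and its direct limit maps are ordinal definable in $L(\mathbb{R})$. For this I would use the standard replacement of the strategy by $\HOD$-definable comparison. Each iterate $Q$ is, in $L(\mathbb{R})$, uniquely determined by the theory of its $\delta_0^Q$-cutoff with suitable term parameters. By \cref{lemma - directedness} and full normalization, the maps $i_{Q,R}$ restricted to $Q|\delta_0^Q$ are independent of the tree (\cref{lemma - commutativity}). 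They can therefore be computed from $s$-iterability, i.e. preservation of the OD-in-$L(\mathbb{R})$ sets of reals coded via the $\omega$ Woodins above $\delta_0$. This makes $M_\infty|\delta_\infty$ definable from no parameters in $L(\mathbb{R})$. Since the direct limit is OD and $\delta_\infty\le\Theta$ by a counting argument ($\Theta$ is the sup of $\OD$ prewellorderings), we get $M_\infty|\delta_\infty\subseteq\HOD$.

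\emph{$V_\Theta\cap\HOD\subseteq M_\infty$, and $\delta_\infty=\Theta$.} For the reverse inclusion, take an OD set of ordinals $A\subseteq\xi<\Theta$. Using genericity iterations over iterates $Q\in\mathcal{F}$ (Woodin's extender algebra at $\delta_0^Q$), I would realize $L(\mathbb{R})$ as a derived model of $Q$. This lets $A$ be computed inside $Q[g]$ from a term. By homogeneity, $A$ is then definable over $Q|\delta_0^Q$-iterates, and pushing forward shows $A\in M_\infty$. Taking $A$ to code $\OD$ prewellorderings gives $\Theta\le\delta_\infty$; combined with the previous step this yields $\delta_\infty=\Theta$.

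\emph{$\HOD=L[M_\infty,\Lambda]$.} For the full model, I would show $\Lambda$ is OD in $L(\mathbb{R})$: trees in $M_\infty|\lambda_\infty$ based on $M_\infty|\delta_\infty$ are short trees, and their branches are determined by Q-structures, which are OD-recognizable, giving $L[M_\infty,\Lambda]\subseteq\HOD$. Conversely, every OD set of ordinals is coded into $M_\infty$ together with the direct-limit system restricted to its own iterates. That system is computable from $\Lambda$ via the internal direct limit along $\Lambda$-iterates of $M_\infty$, and this gives $\HOD\subseteq L[M_\infty,\Lambda]$.

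The main obstacle is this last step: showing that $\Lambda$ suffices to reconstruct, inside $L[M_\infty,\Lambda]$, the map $\HOD\ni A\mapsto$ its code. It requires Woodin's $\Lambda$-internal direct limit argument, namely that the derived-model computation of $\HOD^{L(\mathbb{R})}$ done inside $M_\infty$ using $\Lambda$ agrees with the external one.
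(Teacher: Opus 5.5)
The paper does not prove this theorem; it quotes it from Steel--Woodin. So your sketch has to be measured against their argument rather than an in-paper proof.

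\textbf{Steps one and two.} These follow the usual outline. In step one you replace $\Sigma$, which is not available in $L(\mathbb{R})$, by $s$-iterability of suitable premice. In step two you capture OD sets below $\Theta$ by terms. Step two is only gestured at: the extender algebra at $\delta_0^Q$ makes a single real generic, and no countable $Q$ has $L(\mathbb{R})$ itself as a derived model. What is actually available is term capturing together with elementary equivalence of derived models.

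\textbf{The gap is in step three.} You assert that the trees in $M_\infty|\lambda_\infty$ based on $M_\infty|\delta_\infty$ are short, so that $\Lambda$ is Q-structure guided and hence OD. This is false. The domain of $\Lambda$ contains maximal trees $\mathcal{T}$, where $\delta(\mathcal{T})$ is Woodin in the branch model and there is no Q-structure to find. A natural example is the tree leading from $M_\infty$ to the direct limit of its own iterates computed in $M_\infty^{\mathrm{Col}(\omega,\delta_\infty)}$. This is the internal analogue of the tree $\mathcal{S}$ of length $\theta+1$ in the proof of \cref{thm - main L(R)}, whose last branch is taken at the maximal stage $\theta$. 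For such trees you need a different reason why $\Lambda(\mathcal{T})$ is OD in $L(\mathbb{R})$. One such reason is that the correct branch is the unique cofinal branch whose iteration map agrees with the OD direct-limit maps on the relevant hulls.

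\textbf{Why this matters for the whole theorem.} The Q-structure-guided part of $\Lambda$ is something $M_\infty$ can compute internally, since the relevant Q-structures have no Woodin cardinals above $\delta(\mathcal{T})$. Whatever $\Lambda$ contributes beyond $M_\infty$ therefore comes from its branches on maximal trees, that is, from iteration maps of $M_\infty$ into direct limits of its own iterates.

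Your last step, which you rightly flag as the main obstacle, would have to use exactly these maps. So your description of $\Lambda$ conflicts with the use you make of it, and the inclusion $\HOD\subseteq L[M_\infty,\Lambda]$ is asserted rather than proved. What is missing is Woodin's argument that every OD set of ordinals of $L(\mathbb{R})$ can be decoded inside $L[M_\infty,\Lambda]$ from $M_\infty$ together with $\Lambda$'s action on maximal trees.
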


In particular, if $M_\omega^\#$ exists and is fully iterable, the $H$ from the statement of \cref{thm - hod L(R)} is just $M_\omega$. 

That $M_\omega^\#$ exists and is fully iterable implies that $\mathbb{R}^\#$. If we assume additionally that $L(\mathbb{R}, \mathbb{R}^\#)\vDash \AD$, we can strengthen \cref{thm - main L(R)} a bit to say that the trees we produce are actually according to tails of $\Sigma$, the iteration strategy for $M_\omega$.

\begin{thm}\label{thm - main L(R) v2}
    Assume $M_\omega^\#$ exists and is fully iterable and $L(\mathbb{R}, \mathbb{R}^\#)\vDash \AD$. Let $U$ be an $L(\mathbb{R})$-ultrafilter on an ordinal and let $\tree{T}^\frown b$ be the tree as in \cref{thm - main L(R)}. Then, $\tree{T}^\frown b$ is by $\Sigma_{M_\infty}$ (viewing $\tree{T}^\frown b$ as an iteration tree on $M_\infty$).
\end{thm}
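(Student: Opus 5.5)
We want to show that the tree $\tree{T}^\frown b$ from \cref{thm - main L(R)}, viewed on $M_\infty$, is by $\Sigma_{M_\infty}$. The idea is to work in the larger determinacy model $L(\mathbb{R},\mathbb{R}^\#)$, which can see $\Sigma$ restricted to countable trees, and compare the tree produced there with $\tree{T}$. First we note that $\Sigma\restriction\textnormal{HC}$, for trees on $M_\omega$ based below $\delta_0$, is definable from $\mathbb{R}^\#$ via $\mathcal{Q}$-structures. Hence the pair $(M_\omega|\delta_0$ or $M_\omega^-, \Sigma\restriction\textnormal{HC})$ gives a mouse pair with scope $\textnormal{HC}$ lying in $L(\mathbb{R},\mathbb{R}^\#)$. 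Its direct limit $M_\infty(M_\omega,\Sigma)$, computed in $L(\mathbb{R},\mathbb{R}^\#)$, agrees with the $M_\infty$ of \cref{thm - hod L(R) v2}. Moreover, $M_\infty$ is a non-dropping $\Sigma^+$-iterate of $M_\omega$, and $\Sigma^+_{M_\infty}$ extends $\Sigma_{M_\infty}$ on the relevant trees, by very strong hull condensation together with the fact that countable hulls of trees by $\Sigma_{M_\infty}$ are by $\Sigma$.

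Next, $U$ is an $L(\mathbb{R})$-ultrafilter on some $\kappa<\Theta^{L(\mathbb{R})}$. Since $P(\kappa)$ is the same in $L(\mathbb{R})$ and in $L(\mathbb{R},\mathbb{R}^\#)$ (both equal $P(\kappa)\cap\HOD$-coded sets given by the Coding Lemma), $U$ is also an ultrafilter in $L(\mathbb{R},\mathbb{R}^\#)$. Because $L(\mathbb{R},\mathbb{R}^\#)\vDash\AD^++V=L(P(\mathbb{R}))$, we may apply \cref{lemma - ultrapowers of M-infinity} there. This yields a unique normal tree $\tree{V}$ on $M_\infty$ by $\Sigma^+_{M_\infty}$ with last model $i_U(M_\infty)$ and $i^{\tree{V}}_{0,\infty}=i_U\restriction M_\infty$. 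Here $i_U$ is the ultrapower in $L(\mathbb{R},\mathbb{R}^\#)$; we check that it agrees with the $L(\mathbb{R})$-ultrapower on $M_\infty$, using the same argument as for $L_\gamma(\mathbb{R})$ in \cref{thm - main L(R)}, since functions $\kappa\to M_\infty|\delta_\infty$ are coded by sets in $L(\mathbb{R})$.

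It remains to identify $\tree{V}$ with $\tree{T}^\frown b$. Both are normal trees on $M_\infty$ based on $M_\infty|\delta_\infty$ whose last model's $\delta_\infty$-part is $i_U(M_\infty|\delta_\infty)$. By \cref{rmk - uniqueness of normal trees}, each is the least-extender-disagreement comparison tree of $M_\infty$ against that target, so their extenders agree as long as their branch choices agree. We argue by induction on limit $\lambda$. Suppose $\tree{T}\restriction\lambda=\tree{V}\restriction\lambda$. The branch $[0,\lambda)_{\tree{T}}$ is chosen by the short tree strategy of $\HOD^{L(\mathbb{R})}$, namely the branch with $\mathcal{Q}$-structure iterable in $L(\mathbb{R})$. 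Meanwhile $\Sigma^+_{M_\infty}(\tree{V}\restriction\lambda)$ is also the $\mathcal{Q}$-structure guided branch, because $\Sigma$ is $\mathcal{Q}$-structure guided for short trees and this property reflects through countable hulls. Uniqueness of $\mathcal{Q}$-structure branches (via the usual zipper argument) gives agreement. At stage $\Theta$ the branch $b$ is the unique cofinal branch, so it coincides with the main branch of $\tree{V}$. Hence $\tree{T}^\frown b=\tree{V}$, which is by $\Sigma^+_{M_\infty}$ and thus by $\Sigma_{M_\infty}$.

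The main obstacle is the limit-stage identification: showing that $\Sigma^+_{M_\infty}$ on uncountable-length short trees picks $\mathcal{Q}$-structure branches whose $\mathcal{Q}$-structures are exactly those certified in $L(\mathbb{R})$. We would handle this by taking countable hulls, applying $\mathcal{Q}$-structure guidance of $\Sigma$ there, and lifting back by elementarity, as in the proof of \cref{lemma - strategy preservation}.
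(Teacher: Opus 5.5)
Your overall plan matches the paper's sketch. You move to $L(\mathbb{R},\mathbb{R}^\#)$, use the Coding Lemma to keep $U$ an ultrafilter there, and get agreement of the two ultrapower maps. You then apply the lemma on ultrapowers of $M_\infty$ to obtain $\mathcal{V}$, identify $\mathcal{V}$ with $\mathcal{T}^\frown b$, and finish with full normalization and very strong hull condensation.

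\textbf{Gap at stage $\Theta=\Theta^{L(\mathbb{R})}$.} You argue that $b$ is the unique cofinal branch of $\mathcal{T}$, so it equals the main branch $c$ of $\mathcal{V}$. That uniqueness holds only inside $L(\mathbb{R})$. There $\Theta$ is regular and branches are closed, so two cofinal branches are clubs and must meet cofinally. But $c$ is produced in $L(\mathbb{R},\mathbb{R}^\#)$, where $\Theta^{L(\mathbb{R})}$ has countable cofinality, and you give no argument that $c\in L(\mathbb{R})$. Outside $L(\mathbb{R})$ the club argument fails. Moreover $\mathcal{T}$ is a maximal tree ($\delta(\mathcal{T})=\Theta$ is Woodin in $M_b$), so it may well have other cofinal branches. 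Which one the external strategy picks is exactly what has to be proved, so it cannot be read off from uniqueness.

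\textbf{How to close it.} The paper pins down $c$ by its branch embedding:
$i^{\mathcal{V}}_{c}=i_U^{L(\mathbb{R},\mathbb{R}^\#)}\restriction M_\infty|\delta_\infty=i_U^{L(\mathbb{R})}\restriction M_\infty|\delta_\infty=i^{\mathcal{T}}_b$.
It then uses that a non-dropping cofinal branch of a normal tree is determined by its branch embedding, the same point used at the end of the proof of the $L(\mathbb{R})$ theorem. To see this fact, suppose $b\neq c$ diverge at $\eta$ via $E_\alpha,E_\beta$ with $\alpha<\beta$. Then $M_\eta$ is generated by $\ran(i_{0,\eta})$ together with ordinals below both $\mathrm{crit}(E_\alpha)$ and $\mathrm{crit}(E_\beta)$. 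So equal embeddings force $i_{\eta,b}=i_{\eta,c}$, making one of $E_\alpha,E_\beta$ an initial segment of the other. The initial segment condition and normality rule that out.

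\textbf{The ultrapower agreement needs a different justification.} The repair relies on that agreement, and your reason for it is wrong as stated. Not every function $\kappa\to M_\infty|\delta_\infty$ in $L(\mathbb{R},\mathbb{R}^\#)$ lies in $L(\mathbb{R})$: one cofinal in $\Theta^{L(\mathbb{R})}$ cannot, since $\Theta$ is regular in $L(\mathbb{R})$. The correct argument, which the paper flags, uses that $\Theta^{L(\mathbb{R})}$ has countable cofinality in $L(\mathbb{R},\mathbb{R}^\#)$. By countable completeness of $U$, every such $f$ is then bounded below $\Theta^{L(\mathbb{R})}$ on a set in $U$, and bounded functions are in $L(\mathbb{R})$ by the Coding Lemma. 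The same countable cofinality that secures this agreement is what breaks your uniqueness argument. Note also that agreement is only available on $M_\infty|\delta_\infty=(\HOD|\Theta)^{L(\mathbb{R})}$, not on all of $M_\infty$, which is not in $L(\mathbb{R})$.

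Your $\mathcal{Q}$-structure treatment of limit stages below $\Theta$ is a reasonable way to fill in what the paper leaves implicit. The comparison of the two $\mathcal{Q}$-structures there is still only sketched.
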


\begin{proof}[Proof sketch.]
Let $U$ be an $L(\mathbb{R})$-ultrafilter on an ordinal and let $\tree{T}^\frown b$ be the tree on $\HOD^{L(\mathbb{R})}$ as in \cref{thm - main L(R)}. By a Coding Lemma argument, $U$ is still an ultrafilter in $L(\mathbb{R}, \mathbb{R}^\#)$, $i^{L(\mathbb{R}, \mathbb{R}^\#)}_U((\HOD|\Theta)^{L(\mathbb{R})}) = i_U^{L(\mathbb{R})}((\HOD|\Theta)^{L(\mathbb{R})})$, and 
$i^{L(\mathbb{R}, \mathbb{R}^\#)}_U\restriction(\HOD|\Theta)^{L(\mathbb{R})} = i_U^{L(\mathbb{R})}\restriction(\HOD|\Theta)^{L(\mathbb{R})}$. (This uses that $\Theta^{L(\mathbb{R})}$ has countable cofinality in $L(\mathbb{R}, \mathbb{R}^\#)$.) 
We also have that the restriction of $\Sigma$ to countable stacks of normal trees on $M_\omega|\delta_0$, which we denote $\Lambda$, is a member of $L(\mathbb{R}, \mathbb{R}^\#)$ because the full strategy for countable stacks on $M_\omega^\#$ is in $L(\mathbb{R}, \mathbb{R}^\#)$. We can apply \cref{lemma - ultrapowers of M-infinity} to $(M_\omega|\delta_0, \Lambda)$ in $L(\mathbb{R}, \mathbb{R}^\#)$ to get a non-dropping normal tree $\tree{V}$ by $\Lambda^+_{M_\infty|\delta_\infty}$ on $M_\infty|\delta_\infty$ with last model $i^{L(\mathbb{R})}_U(M_\infty|\delta_\infty)$ such that $i^\tree{V}_{0,\infty}=i^U_{L(\mathbb{R}, \mathbb{R}^\#)}\restriction M_\infty|\delta_\infty = i^U_{L(\mathbb{R})}\restriction M_\infty|\delta_\infty$, 
using that $M_\infty|\delta_\infty =(\HOD|\Theta)^{L(\mathbb{R})}$, by \cref{thm - hod L(R) v2}, and our observation that $i^U_{L(\mathbb{R}, \mathbb{R}^\#)}$ and $i^U_{L(\mathbb{R})}$ agree on $(\HOD|\Theta)^{L(\mathbb{R})}$. These observations also imply that $\tree{V}=\tree{T}^\frown b$, viewed as a tree on $M_\infty|\delta_\infty$. Since the full strategy $\Sigma$ for $M_\omega$ has very strong hull condensation and fully normalizes well, it follows that $\tree{T}^\frown b$ must actually be by $\Sigma_{M_\infty}$, as desired. 
\end{proof}

\bibliographystyle{plain}
\bibliography{Bibliography.bib}
\end{document}